\theoremstyle{thmstyleone}%
\newtheorem{theorem}{Theorem}
\newtheorem{proposition}[theorem]{Proposition}%
\theoremstyle{thmstyletwo}%
\newtheorem{remark}{Remark}%
\newtheorem{lemma}{Lemma}%
\newtheorem{corollary}{Corollary}%
\theoremstyle{thmstylethree}%
\def\Q{\mathbf{Q}}
\def\P{\mathbf{P}}
\def\I{\mathbf{I}}
\def\n{\mathbf{n}}
\def\x{\mathbf{x}}
\newcommand{\dr}{\delta \rho}
\def\_v{\mathbf{v}}
\def\d{\textrm{d}}
\newcommand{\R}{\mathbb R}
\DeclareMathOperator{\argmin}{arg\,min}
\newcommand{\nvec}{\mathbf{n}}
\newcommand{\Qvec}{\mathbf{Q}}
\DeclareMathOperator{\tr}{tr} 
\newcommand{\Ivec}{\mathbf{I}}
\begin{document}

\title[Article Title]{A Landau-de Gennes Type Theory for Cholesteric-Helical Smectic-Smectic C* Liquid Crystal Phase Transitions}


\author[1]{\fnm{Apala} \sur{Majumdar}}\email{apala.majumdar@manchester.ac.uk}

\author[2]{\fnm{Baoming} \sur{Shi}}\email{bs3705@columbia.edu}

\author[3]{\fnm{Dawei} \sur{Wu}}\email{2201110052@pku.edu.cn}

\author[4]{\fnm{Jingmin} \sur{Xia}}\email{jingmin.xia@nudt.edu.cn}

\author*[5]{\fnm{Lei} \sur{Zhang}}\email{zhangl@math.pku.edu.cn}

\affil[1]{\orgdiv{Department of Mathematics}, \orgname{University of Manchester}, \orgaddress{\city{Manchester}, \postcode{M13 9PL}, \country{UK}}}

\affil[2]{\orgdiv{Department of Applied Physics and Applied Mathematics}, \orgname{Columbia University}, \orgaddress{ \city{New York}, \postcode{10027}, \country{USA}}}

\affil[3]{\orgdiv{School of Mathematical Sciences}, \orgname{Peking University}, \orgaddress{\city{Beijing}, \postcode{100871}, \country{China}}}

\affil[4]{\orgdiv{College of Meteorology and Oceanography}, \orgname{National University of Defense Technology}, \orgaddress{\city{Changsha}, \postcode{410072}, \country{China}}}

\affil[5]{\orgdiv{Beijing International Center for Mathematical Research, Center for Quantitative Biology, Center for Machine Learning Research}, \orgname{Peking University}, \orgaddress{\city{Beijing}, \postcode{100871}, \country{China}}}


\abstract{We present a rigorous mathematical analysis of a modified Landau-de Gennes (LdG) theory modeling temperature-driven phase transitions between cholesteric, helical smectic, and smectic C* phases. This model couples a tensor-valued order parameter (nematic orientational order) with a real-valued order parameter (smectic layer modulation). We establish the existence of energy minimizers of the modified LdG energy in three dimensions, subject to Dirichlet conditions, and rigorously analyze the energy minimizers in two asymptotic limits. First, in the Oseen--Frank limit, we show that the global minimizer strongly converges to a minimizer of the Landau-de Gennes bulk energy. Second, in the limit of dominant elastic constants, we prove that the global minimizers converge to a classical helical director profile. 
Finally, through stability analysis and bifurcation theory, we derive the complete sequence of symmetry-breaking transitions with decreasing temperature—from the cholesteric phase (with in-plane twist and no layering) to an intermediate helical smectic phase (with in-plane twist and layering), and ultimately to the smectic C* phase (with out-of-plane twist and layering). These theoretical results are supported by numerical simulations.}

\keywords{liquid crystals, Landau-de Gennes theory, chiral nematic, chiral smectic C, phase transitions, tensorial model}



\maketitle

\section{Introduction}
\label{sec:intro}

Liquid crystals (LCs) are intermediate mesophases between solid and liquid states, characterized by ordered molecular arrangements \cite{de1993physics}. That is, the constituent molecules align along certain locally preferred directions, referred to as ``directors". These ordered molecular arrangements give rise to distinctive optical and electrical properties in liquid crystals, making them valuable in display technologies, optical devices, and sensors \cite{bisoyi2021liquid,edwards2020interaction,LAGERWALL20121387}. In particular, chiral LCs have shown rapid responses to a switchable applied electric field \cite{chen2024}, thus playing a vital role in the applications of display devices. Chiral LCs can exhibit different phases, such as chiral nematic (or cholesteric) and chiral smectic phases. The cholesteric phase has long-range orientation order that forms a helical twist, while the smectic phase possesses an additional layered structure with positional coherence within the layers \cite{de1972analogy,de1993physics}. There are several chiral smectic phases, such as smectic A* (or chiral smectic A) and smectic C* (or chiral smectic C) \cite{1989nature}, each with distinct characteristics. In the smectic A* phase, directors are distributing themselves averagely perpendicular to layers. In contrast, a non-zero tilt of the directors from layer normals exists in the smectic C* phase, leading to helically rotated directors along the cone surface with such tile angle \cite{2024advsci,xia2024prr,2017yadav}. By imposing an electric field, directors in the smectic A* phase can induce a tilt and transit to the smectic C* phase, resulting in a shrinking layer distance \cite{manna2008pre}.

It is mathematically challenging to model complex structural transitions in LCs. As the temperature varies, LCs exhibit different phases. We refer to the seminal work of Chen and Lubensky  \cite{chenlubensky1976} on smectic LCs; the authors proposed a mean-field model employing two coupled order parameters: the vector-valued director $\mathbf{n}$ and complex-valued function $\Psi$.
This pioneering framework has inspired numerous subsequent studies from both theoretical and experimental perspectives.
Luk'yanchuk \cite{luk1998pre} extended this model to analyze the critical temperature for the chiral nematic to smectic phase transition, while Joo and Philipps \cite{2007joo} provided a rigorous mathematical analysis of the chiral nematic to smectic C* and smectic A* transitions, with a vector-valued nematic order parameter. Garc{\'\i}a-Cervera and Joo investigate the smectic layer undulations when a magnetic field is applied \cite{garcia2012analytic}. Calderer and Joo \cite{calderer2008siap} further developed dynamical models for confined smectic C* systems, although their work focused on regimes far from the transition point.
Experimental validation came from Yoon et al.~\cite{yoon2010pre}, who systematically investigated the temperature dependence of cholesteric pitch, confirming key predictions by the Chen-Lubensky theory.
However, several limitations have become apparent in this classical approach.
From a computational perspective, the complex-valued order parameter introduces numerical challenges and potential physical inconsistencies in interpreting the imaginary part of the complex variable \cite{pevnyi2014modeling}. Moreover, as discussed by Ball and Zarnescu \cite{ball2015discontinuous}, vector-based models cannot adequately describe half-charge defects that are crucial for understanding defect-mediated phase transitions.

To overcome these limitations, we model the smectic C* phase using a modified Landau-de Gennes type tensorial model recently proposed in \cite{xia2025cstar}, which employs a tensor-valued nematic order parameter $\Qvec$ and a real-valued smectic order parameter $\delta\rho$ characterizing the density variation from the average density. This formulation offers two main advantages: (1) it is not restricted to uniaxial LC phases (as are the models with a vector-valued director) and can accurately capture the nematic-smectic coupling, including complex defect structures, and  (2) it is computationally tractable. The validity of this approach has been corroborated by both numerical and experimental studies \cite{xia2025cstar,wittmann2023prr,xia2021structural,shi2025modified}. However, at present, the model still lacks a theoretical foundation, for example, regarding how the model parameters influence the stable states, and whether the model is capable of capturing temperature-driven phase transitions in chiral smectics. 

To this end, we focus on the interpretability of this model, which is defined by multiple phenomenological coefficients - the Landau-de Gennes bulk constants, the Landau-de Gennes elastic constants for $\Qvec$, smectic bulk constants and two nematic/cholesteric-smectic coupling constants. The resulting free energy is effectively the sum of the free energy of the cholesteric phase in the Landau-de Gennes framework with additional smectic and nematic-smectic coupling contributions. We study the qualitative properties of the energy minimizers in two asymptotic limits - the Oseen-Frank limit defined by dominant Landau-de Gennes bulk constants and large macroscopic domains. In this limit, the energy minimizing $\Qvec$-profile converges strongly to a minimizer of the Landau-de Gennes bulk energy for low temperatures, in our admissible space, i.e., uniaxial with constant scalar order parameter. In other words, this can be viewed as the ``vector-valued director" limit of the modified Landau-de Gennes model, for which $\Qvec$ is almost fully parameterised by a vector-valued director. We then study the ``large elastic constant" limit of the Oseen-Frank limiting energy and recover the traditional uniaxial helical cholesteric profile, for $\Qvec$. We also provide a fairly comprehensive description of the symmetry-breaking transitions from cholesteric (chiral nematic with no smectic layering) to the helical smectic phase (chiral nematic with smectic layering) and then to the smectic C* phase (or the chiral smectic C phase) with decreasing temperature, in this phenomenological framework. This includes a rigorous stability analysis of the cholesteric phase and its loss of stability with decreasing temperature. The temperature-driven transitions are controlled by a temperature-dependent parameter in the smectic bulk energy density. These results are a step forward in the interpretability of the modified Landau-de Gennes models in \cite{xia2021structural,xia2024prr, xia2025cstar}, which is essential for studying structural phase transitions and controlling the properties of confined smectic systems. The main technical difficulties are: (i) The nematic-smectic coupling terms introduce many technical difficulties, particularly with regard to the regularity of solutions. (ii) The traditional Landau-de Gennes energy for the nematic phase admits analytic minimizers for spatially homogeneous systems. We have two coupled order parameters - $\Qvec$ and $\dr$ and the layered structure in the tensor model for smectic is described by a higher-order PDE, which makes the analysis of the symmetry-breaking transition process considerably more challenging.

The rest of this paper is organized as follows. In Section \ref{sec: tensor and vector}, we introduce the modified Landau-de Gennes model for cholesteric and smectic C* phases. Moreover, the existence of minimizers is established in three-dimensional domains, subject to Dirichlet conditions, and we study two asymptotic limits of the modified Landau-de Gennes free energy. These asymptotic limits justify the common assumption of ``uniaxial" or ``vector-valued order parameter"  in certain parameter regimes, namely, large domains with dominant nematic/cholesteric phenomenological constants. In Section \ref{sec: phase transition}, we present a detailed analysis of temperature-driven phase transitions from cholesteric to smectic C* phases, revealing the helical smectic configuration as an intermediate phase. Numerical simulations in Section \ref{sec: numerical result} provide concrete validation with a bifurcation diagram that complements our theoretical analysis. We finally draw conclusions with potential future work in Section \ref{sec:conclusions}. 

\section{Landau--de Gennes type model and its Oseen--Frank limit}\label{sec: tensor and vector}

The Landau--de Gennes (LdG) theory \cite{de1993physics}, is the most celebrated continuum theory for nematic liquid crystals (NLCs) and has been hugely successful in describing the Isotropic-Nematic (I-N) phase transition \cite{fei2018isotropic,wu2025diffuse} and structural transitions for nematics \cite{majumdar2010equilibrium,robinson2017molecular,shi2023hierarchies}. The LdG theory describes the nematic phase by the LdG $\Q$-tensor order parameter, which is a traceless and symmetric $3\times 3$ matrix denoted as
\begin{equation*}
    \Qvec=\begin{pmatrix}
     Q_{11} & Q_{12} & Q_{13}\\
     Q_{21} & Q_{22} & Q_{23}\\
     Q_{31} & Q_{32} & Q_{33}
    \end{pmatrix}.
\end{equation*}

The $\Q$ tensor is isotropic if $\Q=0$, uniaxial if $\Q$ has a pair of degenerate nonzero eigenvalues, and biaxial if $\Q$ has three distinct eigenvalues \cite{de1993physics}. A uniaxial nematic phase has a single distinguished direction of averaged molecular alignment, modeled by the eigenvector with the non-degenerate eigenvalue. A biaxial nematic phase has a primary nematic director and a secondary nematic director.  

In this paper, we use a modified LdG theory to study the smectic C*
phase (a liquid-crystal phase characterized by a layered structure in which the director is tilted with respect to the layer normal), wherein we use the LdG order parameter to describe the orientational/nematic ordering with an additional real-valued positional order
parameter $\dr$ and additional energy terms to describe the intrinsic layering of smectic phases. In the absence of surface energies, the free energy for smectic C* liquid crystals in a bounded simply connected Lipschitz domain $\Omega \in \mathbb{R}^2$ or $\Omega \in \mathbb{R}^3$ is given by \cite{xia2025cstar}
\begin{equation}\label{eq:energy_LdG}
    F(\Qvec,\delta\rho):=\int_{\Omega} \bigg\{f_{el}(\Qvec)+f_{bn}(\Qvec)+f_{bs}(\delta\rho) + f_{layer}(\Qvec,\delta\rho) + f_{angle}(\Qvec,\delta\rho) \bigg\}  ,
\end{equation}
where $\Qvec$ is the LdG order parameter (viewed as a macroscopic measure of the nematic anisotropy). The density variation $\delta\rho(\x) = \rho(\x)-\rho_0$, where $\rho_0$ is the average density, models the deviation of the molecular
density from the average molecular density $\rho_0$ at the position $\x$. In fact, the positional order parameter, $\dr$, is the real part of the classical complex-valued smectic order parameter $\psi$ proposed in \cite{de1972analogy} and we refer to \cite{pevnyi2014modeling} for further details.

The elastic energy is \cite{berreman1984tensor}
\begin{equation}
f_{el}(\Qvec) := \frac{\eta_1}{2}|\nabla\times\Qvec+2\sigma \Qvec|^2+\frac{\eta_2}{2}|\nabla\cdot\Qvec|^2+\frac{\eta_{24}}{2}(Q_{ij,k}Q_{ik,j}-Q_{ij,j}Q_{ik,k}),
\end{equation}
where $\nabla\cdot\Qvec=\partial_\alpha Q_{i\alpha}$ and $\nabla\times\Qvec=\epsilon_{i,j,k}\partial_j Q_{k\beta}$ with $\epsilon_{ijk}$ being the Levi-Civita symbol,
$\eta_1, \eta_2, \eta_{24}>0$ are elastic constants, $\sigma=2\pi/p$ with the pitch $p$ of the cholesteric helix for chiral nematics, and $\sigma=0$ for an achiral nematic.
The bulk energy with respect to $\Qvec$ is given by
\begin{equation}\label{f_B}
    f_{bn}(\Q): = \frac{A}{2}\mathrm{tr} \Q^2 - \frac{B}{3} \mathrm{tr} \Q^3 + \frac{C}{4} (\mathrm{tr} \Q^2)^2-f_{B,0},
\end{equation}
where $A=\alpha_1 (T - T_1^*)$ is the rescaled temperature with $\alpha_1>0$ and $T_1^*$ a characteristic temperature related to the loss of stability of the isotropic phase; $B, C>0$ material-dependent bulk constants. For example, typical values for the representative nematic LC material MBBA are $B=0.64\times10^4 \text{Nm}^{-2}$, $C=0.35\times10^4 \text{Nm}^{-2}$ and $K=4\times10^{-11} \text{N}$ \cite{majumdar2010equilibrium,yin2020construction,shi2022nematic}. The constant $f_{B,0}$ is added to ensure a non-negative energy density. The minimizers of the bulk energy $f_{bn}$ depend on the temperature $A$ and determine the nematic phase for spatially homogeneous samples. Specifically, the minimizers of $f_{bn}$ are in the isotropic state when $A> \frac{B^2}{27 C}$, whereas for $A<\frac{B^2}{27 C}$, they constitute a continuum of $\Q$-tensor defined below:

\[
\mathcal{N} =  \left\{ \Q = s_+\left(\mathbf{n}\otimes \mathbf{n} - \frac{\mathbf{I_3}}{3} \right) \right\},
\]
where
\begin{equation}\label{eq: s+}
s_+ = \frac{B + \sqrt{B^2 - 24 AC}}{4C}, 
\end{equation}
$\mathbf{n}$ is an arbitrary unit vector field (also referred to as the nematic director \cite{de1993physics, stewart2019static}), and $\mathbf{I}_m$ is the $m\times m$ identity matrix with the spatial dimension $m$.

The third term in the total free energy (\ref{eq:energy_LdG}) is the bulk energy density of the smectic order parameter $\dr$, which is derived from the Landau theory of phase transitions \cite{de1972analogy,pevnyi2014modeling,izzo2020landau}:
\begin{equation}\label{bs term}
f_{bs}(\dr)=\frac{d}{2}(\dr)^2 - \frac{e}{3} (\dr)^3 +  \frac{f}{4} (\dr)^4,
\end{equation}
where $d=\alpha_2(T-T^*_2)$ is a temperature-dependent parameter with $\alpha_2>0$, and $T_2^*<T_1^*$ is a critical material temperature related to the nematic-smectic phase transition; $e,f>0$ are material-dependent constants. Typically, a non-zero $e$ will result in asymmetric layer structures \cite{pevnyi2014modeling}, and we take $e=0$ to study symmetric layer structures throughout this paper. When $d<0$, i.e., the temperature is low enough, the minimizers of $f_{bs}$ prefer a non-zero density distribution, hence the smectic phase.
 
Furthermore, the energy density for the smectic layer structure is given by
\begin{equation}
f_{layer}(\Qvec,\delta\rho) = \lambda_1\left(\Delta\delta\rho + q^2\delta\rho\right)^2.
\end{equation}
This energy density is minimized when $\delta\rho(\mathbf x) = \sin(q\,\mathbf n_{normal}\cdot\mathbf x)$,
with an arbitrary unit layer normal $|\mathbf n_{normal}|=1$.
Consequently, $q$ is often identified as the wave number of the SmA layers
\cite{shi2025modified,pevnyi2014modeling}, and is expected to be related to the
layer thickness $l$ via $q = 2\pi/l$. The penultimate term in the total energy \eqref{eq:energy_LdG} controls the angle between the layer normals, $\mathbf n_{normal}$, and directors $\n$, and is given by
\begin{equation}\label{eq: fangle}
f_{angle}(\Qvec,\delta\rho) = 
\lambda_2 \left(\mathrm{tr}\left({D}^2\delta\rho \left(\mathbf{Q}+\frac{\Ivec}{3}\right)\right) + q^2\delta\rho\cos^2\theta_0\right)^2,
\end{equation}
where ${D}^2$ implies the Hessian operator, and $\theta_0\geqslant 0$ is the preferred angle between the layer normal and the director. For instance, $\theta_0$ retains a non-zero value for the smectic C phase, while it drops to a zero for the smectic A phase, as shown in \cite{xia2024prr}.

\subsection{Existence of global minimizers}
Consider a bounded simply connected Lipschitz domain $\Omega \in \mathbb{R}^3$. Guided by the structure of the modified LdG energy in \eqref{eq:energy_LdG}, the admissible $\Q$-tensor and smectic order parameter $\dr$ belong to Hilbert spaces
\begin{equation}
W^{1,2}_{\mathbf{S}_0}(\Omega)=\left\{\Q \in \mathbf{S}_0|Q_{i,j} \in W^{1,2}(\Omega)\right\},
\end{equation}
and $W^{2,2}(\Omega)$, respectively, where
\begin{equation}
\begin{aligned}
\mathbf{S}_0:=&\left\{\Q \in \mathbb{R}^{3\times 3}: \Q_{ij}=\Q_{ji},\sum_{i = 1}^3 \Q_{ii}=0\right\},\\
 W^{k,p}(\Omega)=& \left\{ \dr:  \int_\Omega \left\{|\dr|^p+\sum_{|\alpha|\leqslant k}| {D}^\alpha \dr|^p \right\} <\infty \right\}.
\end{aligned}
\end{equation}
In this subsection, we consider the Dirichlet boundary condition $\Q=\Q_{bc}, \dr=\dr_{bc}, \text{on } \partial \Omega$, where $\Q_{bc} \in W^{\frac{1}{2},2}(\Omega)$ and $\dr_{bc} \in W^{\frac{3}{2},2}(\Omega)$, as in \cite{xia2023variational}. Dirichlet boundary conditions correspond to strong anchoring on the boundaries, for example, the photo-patterned anchoring can control the surface alignment \cite{wu2023topological,wu2022electrically}, which can control the nematic and potentially, smectic ordering on the boundaries. Here, $\Q_{bc}$ is uniaxial of the form, $\Q_{bc}=s_+(\n_{bc}\otimes \n_{bc}-\I_3/3)$, on $\partial\Omega$.
Then, the admissible spaces are
\begin{equation}
\begin{aligned}
W_{\Q}&=W^{1,2}_{\mathbf{S}_0}(\Omega) \cap \left\{\Q: \Q=\Q_{bc} \text{ on } \partial \Omega \right\} ,\\
W_{\dr}&=W^{2,2}(\Omega) \cap \left\{\dr: \dr=\dr_{bc} \text{ on } \partial \Omega \right\}.
\label{eq: admissible Dirichlet for u}
\end{aligned}
\end{equation}

Following the direct method of calculus of variations, we prove the existence of global minimizers of \eqref{eq:energy_LdG} by establishing several essential lemmas.

\begin{lemma}
If $\eta_1>0,0<\eta_{24}<3\eta_1,5\eta_1+10\eta_2-9\eta_{24}>0$, then there exists a constant $C_0>0$ subject to
\begin{equation}\frac{\eta_1}{2}|\nabla \Q|^2+\frac{\eta_2-\eta_{24}}{2}Q_{ij,j}Q_{ik,k}+\frac{\eta_{24}-\eta_1}{2}Q_{ij,k}Q_{ik,j}\geqslant C_0 |\nabla \Qvec|^2.
\label{eq: positive quadratic nabla Q}
\end{equation}
\end{lemma}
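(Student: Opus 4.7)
The lemma is a pointwise algebraic fact, so I would fix a point and view both sides as functions of the symbol $\tau_{ijk} := Q_{ij,k}$, which lies in the $15$-dimensional real vector space
$$V := \{\tau \in \mathbb{R}^{3\times 3\times 3} : \tau_{ijk}=\tau_{jik},\ \tau_{iik}=0\}.$$
The left-hand side is then the $SO(3)$-invariant quadratic form
$$\Phi(\tau) := \tfrac{\eta_1}{2}|\tau|^2 + \tfrac{\eta_2-\eta_{24}}{2}\tau_{ijj}\tau_{ikk} + \tfrac{\eta_{24}-\eta_1}{2}\tau_{ijk}\tau_{ikj},$$
and the target is $\Phi(\tau) \geq C_0 |\tau|^2$ on $V$. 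Note that direct Cauchy--Schwarz bounds of the form $|\tau_{ijj}\tau_{ikk}|\leq 3|\tau|^2$ and $|\tau_{ijk}\tau_{ikj}|\leq |\tau|^2$ only recover the weaker coercivity condition $\eta_{24}<2\eta_1$, so a sharper argument is needed to reach the stated threshold $\eta_{24}<3\eta_1$.

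The plan is to exploit $SO(3)$-invariance. As an $SO(3)$-representation, $V$ splits into irreducible summands $V = V_1 \oplus V_2 \oplus V_3$ of dimensions $3, 5, 7$ (spins $1$, $2$, $3$). By Schur's lemma, every $SO(3)$-invariant quadratic form on $V$ acts as a scalar multiple of $|\cdot|^2$ on each $V_k$; in particular each of $|\tau|^2$, $\tau_{ijj}\tau_{ikk}$, $\tau_{ijk}\tau_{ikj}$, and hence $\Phi$, is determined on $V_k$ by a single eigenvalue $\lambda_k$, and the desired bound follows with $C_0 := \min_k \lambda_k$ as soon as each $\lambda_k>0$. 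I compute the three eigenvalues by evaluating $\Phi$ on one representative of each summand. On $V_3$ (completely symmetric trace-free tensors), $\tau_{ijj}=0$ and $\tau_{ikj}=\tau_{ijk}$, giving $\lambda_3 = \eta_{24}/2$. On $V_1$, parameterising by $v\in\mathbb{R}^3$ through the normalized ansatz $\tau_{ijk} = \tfrac{3}{10}(\delta_{ik}v_j + \delta_{jk}v_i) - \tfrac{1}{5}\delta_{ij}v_k$ (chosen so that $\tau_{ijj}=v_i$) and contracting indices yields $\lambda_1 = (5\eta_1 + 10\eta_2 - 9\eta_{24})/12$. On $V_2$, the less obvious ansatz $\tau_{ijk} = \epsilon_{ikl}M_{lj} + \epsilon_{jkl}M_{li}$ with $M$ symmetric trace-free, combined with the standard identity $\epsilon_{ikl}\epsilon_{jkm} = \delta_{ij}\delta_{lm} - \delta_{im}\delta_{lj}$, yields $\lambda_2 = (3\eta_1 - \eta_{24})/4$. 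The three stated hypotheses translate one-to-one into $\lambda_3>0$, $\lambda_2>0$, $\lambda_1>0$, so one may take $C_0 = \min\{\eta_{24}/2,\ (3\eta_1-\eta_{24})/4,\ (5\eta_1+10\eta_2-9\eta_{24})/12\}>0$.

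The main obstacle is obtaining a clean representative of $V_2$ and verifying that it truly isolates the spin-$2$ eigenvalue. Specifically, for the $\epsilon$-symbol ansatz one must check that $\tau$ lies in $V$ (symmetry in $(i,j)$ and vanishing $(i,j)$-trace, both following from the antisymmetry of $\epsilon$ against the symmetry of $M$), that its divergence vanishes, $\tau_{ijj}=0$, so $\tau$ is orthogonal to $V_1$, and that its completely symmetric part vanishes (from $\epsilon_{ikl}+\epsilon_{kil}=0$ after symmetrization), so $\tau$ is orthogonal to $V_3$. After that, the remaining contractions are routine if lengthy index bookkeeping, and the three sharp thresholds emerge from the three eigenvalue formulas.
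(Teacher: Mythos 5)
Your proposal is correct, and it reaches the conclusion by a genuinely different route from the paper. The paper's proof is essentially a citation: it invokes the known positivity criterion of Wang et al.\ for $L_1|\nabla \mathbf{Q}|^2+L_2Q_{ij,j}Q_{ik,k}+L_3Q_{ij,k}Q_{ik,j}$ (positive definite iff $L_1>0$, $-L_1<L_3<2L_1$, $L_1+\tfrac{5}{3}L_2+\tfrac16 L_3>0$) and substitutes $L_1=\tfrac{\eta_1}{2}$, $L_2=\tfrac{\eta_2-\eta_{24}}{2}$, $L_3=\tfrac{\eta_{24}-\eta_1}{2}$. You instead re-derive that criterion from scratch via the $SO(3)$ decomposition of the $15$-dimensional symbol space into spins $1\oplus 2\oplus 3$ and Schur's lemma; I checked your three eigenvalues and they are right: on the spin-$3$ part $L_1+L_3=\tfrac{\eta_{24}}{2}$, on the spin-$2$ part $L_1-\tfrac{L_3}{2}=\tfrac{3\eta_1-\eta_{24}}{4}$, and on the spin-$1$ part $L_1+\tfrac53 L_2+\tfrac16 L_3=\tfrac{5\eta_1+10\eta_2-9\eta_{24}}{12}$, which are exactly the three cited thresholds, so $C_0=\min\{\tfrac{\eta_{24}}{2},\tfrac{3\eta_1-\eta_{24}}{4},\tfrac{5\eta_1+10\eta_2-9\eta_{24}}{12}\}$ works. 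What each approach buys: the paper's argument is one line but opaque and leaves $C_0$ implicit; yours is self-contained, produces an explicit optimal constant, and shows the hypotheses are sharp (necessary as well as sufficient), since each eigenvalue must be nonnegative for the form to be nonnegative. One small simplification to your plan: since each spin appears with multiplicity one, equivariance of the $\epsilon$-ansatz $M\mapsto\epsilon_{ikl}M_{lj}+\epsilon_{jkl}M_{li}$ already forces its (nonzero) image to be the spin-$2$ summand by Schur's lemma, so the separate verifications that its divergence and fully symmetric part vanish, while true, are not needed.
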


\begin{proof}
From \cite{wang2021modeling}, $L_1|\nabla \Q|^2+L_2 Q_{ij,j}Q_{ik,k}+L_3 Q_{ij,k}Q_{ik,j}$ is the
positive definite quadratic form of $|\nabla \Qvec|^2$ if $L_1>0, -L_1<L_3<2L_1, L_1+\frac{5}{3}L_2+\frac{1}{6}L_3>0$. Substituting $L_1=\frac{\eta_1}{2}, L_2=\frac{\eta_2-\eta_{24}}{2}, L_3=\frac{\eta_{24}-\eta_1}{2}$ completes the proof.
\end{proof}

\begin{lemma}
Assume that the boundary $\partial \Omega \in C^{1,1}$ and $p>1$, then there exists a constant $C(\Omega,p)$ depending on $\Omega$ and $p$, such that
\begin{equation}
\Vert {D}^2 \dr \Vert_{L^p(\Omega)} \leqslant C(\Omega,p)\left(\Vert \Delta \dr \Vert_{L^p(\Omega)} + \Vert \dr \Vert_{L^p(\Omega)} +\Vert \dr_{bc} \Vert_{W^{2-\frac{1}{p},p}(\partial \Omega)}\right).
\label{eq: Wp estimation}
\end{equation}
\end{lemma}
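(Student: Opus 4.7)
The plan is to recognize the inequality as a direct consequence of the classical $W^{2,p}$ elliptic regularity (Calder\'on--Zygmund / Agmon--Douglis--Nirenberg) theory for the Poisson equation on a $C^{1,1}$ domain with inhomogeneous Dirichlet data (see, for instance, Gilbarg--Trudinger Theorem~9.15, or Grisvard Theorem~2.4.2.5). The $C^{1,1}$ hypothesis is exactly what is needed to flatten the boundary via local coordinate charts with bounded second derivatives of the change of variables, so that boundary Calder\'on--Zygmund estimates on half-spaces transfer to $\Omega$ with a finite constant $C(\Omega,p)$.

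Concretely, I would regard $\dr$ as a $W^{2,p}(\Omega)$-solution of
\begin{equation*}
\Delta u = f\ \ \text{in}\ \Omega, \qquad u = \dr_{bc}\ \ \text{on}\ \partial\Omega,
\end{equation*}
with $f := \Delta\dr$. If $f\notin L^p(\Omega)$ there is nothing to prove, so assume $f\in L^p(\Omega)$. A clean way to organize the estimate is the splitting $\dr = v + w$, where $w$ is the harmonic extension of $\dr_{bc}$ (i.e.\ $\Delta w = 0$ in $\Omega$ with $w = \dr_{bc}$ on $\partial\Omega$) and $v := \dr - w$ satisfies $\Delta v = f$ in $\Omega$ with zero boundary trace. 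The zero-Dirichlet Calder\'on--Zygmund estimate then yields $\|v\|_{W^{2,p}(\Omega)} \leqslant C(\Omega,p)\|f\|_{L^p(\Omega)}$, and standard boundary regularity for the harmonic extension gives $\|w\|_{W^{2,p}(\Omega)} \leqslant C(\Omega,p)\|\dr_{bc}\|_{W^{2-1/p,p}(\partial\Omega)}$. Summing and using $\|D^2\dr\|_{L^p(\Omega)} \leqslant \|v\|_{W^{2,p}(\Omega)} + \|w\|_{W^{2,p}(\Omega)}$ delivers the desired inequality.

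The term $\|\dr\|_{L^p(\Omega)}$ in (\ref{eq: Wp estimation}) is not strictly necessary in this two-piece argument (one can absorb $\|v\|_{L^p}$ into $\|f\|_{L^p}$ via Poincar\'e for $v \in W^{1,p}_0$), but it is naturally present in the general Agmon--Douglis--Nirenberg statement that accommodates lower-order terms, and retaining it costs nothing. There is no genuine obstacle here: the whole content of the lemma is the invocation of classical elliptic regularity, whose only nontrivial ingredient is the boundary Calder\'on--Zygmund bound, and that is precisely what the $C^{1,1}$ regularity of $\partial\Omega$ supplies.
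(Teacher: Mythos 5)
Your proposal is correct and follows essentially the same route as the paper, whose proof simply cites the global $W^{2,p}$ estimate for the Dirichlet problem for the Laplacian on a $C^{1,1}$ domain; your decomposition into a zero-trace part plus the harmonic extension of $\dr_{bc}$ is just the standard argument behind that citation. Your observation that the $\Vert\dr\Vert_{L^p(\Omega)}$ term is not strictly needed is also accurate, and keeping it is harmless.
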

\begin{proof}
This follows directly from the global \(W^{2,p}(\Omega)\) estimate for harmonic equations (see \cite{evans2022partial}).

\end{proof}

\begin{lemma}\label{lemma: wsls}
$F(\Qvec,\delta\rho)$ is weakly lower semi-continuous (w.l.s.c.), i.e.,
    \begin{equation}    \liminf_{k\rightarrow \infty}F(\Q_k,\dr_k)\geqslant F(\Q,\dr)\text{ holds if } (\Q_k,\dr_k)\rightharpoonup (\Q,\dr) \text{ in } W_\Q \times W_\dr
    \end{equation}
for $\eta_1, \eta_2$, and $\eta_{24}$ satisfying the following parameter relation 
\begin{equation}
\eta_1>0,0<\eta_{24}<3\eta_1,5\eta_1+10\eta_2-9\eta_{24}>0.
\label{eq: positive definite domain}
\end{equation}
\end{lemma}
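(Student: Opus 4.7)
The plan is to split $F$ into its five densities and establish weak lower semi-continuity of each piece, relying on Rellich--Kondrachov compactness and the Sobolev embeddings in three dimensions. First I would extract, from $(\Q_k,\dr_k)\rightharpoonup(\Q,\dr)$ in $W_\Q\times W_\dr$, that $\Q_k\to\Q$ strongly in $L^p(\Omega)$ for all $p<6$ with $\nabla\Q_k\rightharpoonup\nabla\Q$ in $L^2$, while $W^{2,2}(\Omega)\hookrightarrow C^{0,1/2}(\bar{\Omega})$ in three dimensions gives $\dr_k\to\dr$ uniformly and $\nabla\dr_k\to\nabla\dr$ strongly in $L^p$ for $p<6$, with $D^2\dr_k\rightharpoonup D^2\dr$ in $L^2$. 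Without loss of generality I would assume $\sup_k F(\Q_k,\dr_k)<\infty$, since otherwise the $\liminf$ inequality is trivial.

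The four easier pieces follow by fairly standard arguments. Since $f_{bn}$ and $f_{bs}$ are polynomials of degree at most four in $\Q$ and $\dr$ respectively (noting $e=0$), the strong $L^p$ convergences above give honest convergence of $\int f_{bn}(\Q_k)$ and $\int f_{bs}(\dr_k)$. For $f_{layer}$, since $\Delta\dr_k+q^2\dr_k\rightharpoonup\Delta\dr+q^2\dr$ in $L^2$, the squared $L^2$-norm is w.l.s.c. For $f_{el}$, expanding $|\nabla\times\Q+2\sigma\Q|^2$ and regrouping produces three pieces: (i) a quadratic form purely in $\nabla\Q$ which, by Lemma~1 and the parameter relation \eqref{eq: positive definite domain}, dominates $C_0|\nabla\Q|^2$ pointwise and thus defines a convex, continuous functional in $\nabla\Q\in L^2$ that is w.l.s.c.; (ii) a cross term $2\sigma\eta_1(\nabla\times\Q):\Q$ bilinear in $\nabla\Q$ and $\Q$, whose integral converges by weak-strong pairing; and (iii) the pure $2\sigma^2\eta_1|\Q|^2$ contribution, continuous under strong $L^2$ convergence.

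The main obstacle is the angle term, which couples the weakly convergent Hessian with a tensor factor: a priori the product $\dr_{k,ij}Q_{k,ij}$ lies only in $L^{3/2}$ (since $D^2\dr_k\in L^2$ and $\Q_k\in L^6$ by Sobolev), so $L^2$ control of the integrand is not immediate. My plan is to set
\begin{equation*}
G_k := \tr\!\left(D^2\dr_k\bigl(\Q_k+\tfrac{\I}{3}\bigr)\right) + q^2\dr_k\cos^2\theta_0,
\end{equation*}
and use the energy bound together with the controls on $f_{el}+f_{bn}+f_{bs}+f_{layer}$ just established (each of which is bounded below, via Lemma~1, a Cauchy--Schwarz absorption for the cross term, and $f_{layer}\geq 0$) to conclude $\|G_k\|_{L^2}^2\leq C/\lambda_2$. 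Hence along a subsequence $G_k\rightharpoonup G^\star$ in $L^2$. Testing against any $\phi\in C_c^\infty(\Omega)$, the product $Q_{k,ij}\phi$ converges strongly in $L^2$ (since $Q_{k,ij}\to Q_{ij}$ in $L^p$ for $p<6$ and $\phi\in L^\infty$), so the weak-strong pairing with $\dr_{k,ij}\rightharpoonup \dr_{,ij}$ in $L^2$ gives $\int \dr_{k,ij}Q_{k,ij}\phi\to\int \dr_{,ij}Q_{ij}\phi$; combined with the straightforward limits of $\tfrac{1}{3}\Delta\dr_k$ and $q^2\dr_k\cos^2\theta_0$, this identifies $G^\star$ with $G:=\tr(D^2\dr(\Q+\tfrac{\I}{3}))+q^2\dr\cos^2\theta_0$ as distributions, hence as $L^2$ functions. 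Uniqueness of the weak cluster point promotes this to $G_k\rightharpoonup G$ in $L^2$ for the whole sequence, and $\liminf\|G_k\|_{L^2}^2\geq\|G\|_{L^2}^2$ delivers w.l.s.c.\ of $F_{angle}$. Summing the five contributions completes the proof.
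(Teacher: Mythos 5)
Your proposal is correct, and for most of the terms it mirrors the paper: the same regrouping of $f_{el}$ into a positive-definite quadratic form in $\nabla\Q$ (Lemma 1) plus a cross term handled by weak--strong pairing and a $|\Q|^2$ term handled by compactness, and the same compactness treatment of the bulk densities $f_{bn}$, $f_{bs}$. The genuine difference is the coupling terms. The paper lumps $f_{layer}+f_{angle}$ together, observes that the integrand is nonnegative and convex in $D^2\dr$, and invokes the general weak lower semicontinuity theorem from Evans; your proof instead treats $f_{layer}$ by elementary weak $L^2$ convergence of $\Delta\dr_k+q^2\dr_k$, and gives a hands-on argument for $f_{angle}$: extract a uniform $L^2$ bound on $G_k=\tr(D^2\dr_k(\Q_k+\I/3))+q^2\dr_k\cos^2\theta_0$ from the energy bound, identify the weak $L^2$ cluster point with $G$ by testing against $C_c^\infty$ functions and pairing the weakly convergent Hessian with the strongly convergent $Q_{k,ij}\phi$, and conclude by weak lower semicontinuity of the squared norm. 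What your route buys is self-containedness at exactly the delicate spot: since $\Q_k$ enters the integrand as a merely strongly $L^p$-convergent ($p<6$) coefficient and $f_{angle}$ is not a priori finite on $W_\Q\times W_\dr$ (a point the paper itself flags after Proposition 1), the textbook convexity theorem does not apply verbatim, and your argument sidesteps this while additionally showing $G\in L^2$ whenever the energies stay bounded; the paper's route is shorter at the cost of leaning on that cited theorem. One small tidy-up: rather than assuming $\sup_k F(\Q_k,\dr_k)<\infty$ outright, pass to a subsequence realizing the $\liminf$ (trivial if it is $+\infty$), along which the energies are bounded; the rest of your argument is unchanged.
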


\begin{proof}
We first consider the LdG elastic energy density, 
$$
f_{el}(\Q)=\frac{\eta_1}{2}|\nabla \times\Qvec+2\sigma \Qvec|^2+\frac{\eta_2}{2}|\nabla \cdot\Qvec|^2+\frac{\eta_{24}}{2}(Q_{ij,k}Q_{ik,j}-Q_{ij,j}Q_{ik,k}).
$$

Note that $|\nabla \times\Qvec|^2=|\nabla \Qvec|^2-Q_{ij,k}Q_{ik,j}$, $|\nabla \cdot \Q|^2=Q_{ij,j}Q_{ik,k}$, we have
\begin{equation}
\label{eq:refom_felastic}
\begin{aligned}
f_{el}(\Q)
&=\frac{\eta_1}{2}|\nabla \Q|^2+\frac{\eta_2-\eta_{24}}{2}Q_{ij,j}Q_{ik,k}+\frac{\eta_{24}-\eta_1}{2}Q_{ij,k}Q_{ik,j}\\
&\quad+2\eta_1\sigma\epsilon_{ikl}Q_{lj,k}Q_{ij}+2\eta_1\sigma^2|\Q|^2,
\end{aligned}
\end{equation}
where $\epsilon_{ijk}=(i-j)(j-k)(k-i)/2$ is the Levi-Civita symbol. By the Rellich–Kondrachov theorem, $\Q_k \rightarrow \Q \text{ in } L^2(\Omega)$, $\dr_k \rightarrow \dr \text{ in } L^2(\Omega)$, $
\nabla \Q_k \rightharpoonup \nabla \Q \text{ in } L^2(\Omega)$, $D^2 \dr_k \rightharpoonup D^2 \dr \text{ in } L^2(\Omega)$, and thus, $\int_\Omega \{ 2\eta_1\sigma\epsilon_{ikl}Q_{lj,k}Q_{ij}+2\eta_1\sigma^2|\Q|^2 \}$ is w.l.s.c..
As the elastic constant satisfies the relation \eqref{eq: positive definite domain}, the first three terms in \eqref{eq:refom_felastic} are the positive definite quadratic form of $\nabla \Q$ from \eqref{eq: positive quadratic nabla Q}, and thus $f_{el}$ is w.l.s.c.. The weak lower semi-continuity of the nematic bulk energy density $f_{bn}$ is guaranteed in \cite{davis1998finite, canevari2017order,shi2024multistability}. Similarly, the weak lower semi-continuity of the smectic bulk energy density $f_{bs}$ can be proven. Now, consider the nematic-smectic coupling term (i.e., $f_{layer}+f_{angle}$)
\begin{equation}
\begin{aligned}
\int_\Omega L(D^2 \dr,\dr,\Q) := \int_\Omega & \bigg\{\lambda_1 (\Delta \dr + q^2\dr)^2 \\&+\lambda_2 \left(\mathrm{tr}\left(D^2\delta\rho \left(\mathbf{Q}+\frac{\Ivec_3}{3}\right)\right) + q^2\delta\rho\cos^2\theta_0\right)^2\bigg\}.
\end{aligned}
\label{eq: layer thickness term}
\end{equation}
The coupling energy density $L(D^2 \dr,\dr,\Q)\geqslant 0$,  is obviously bounded from below. 
Moreover, the terms involving derivatives are convex with respect to $D^2 \dr$, as they are semi-positive-definite quadratic polynomials. Thus, we can deduce that the nematic-smectic coupling term \eqref{eq: layer thickness term} is also w.l.s.c.~by \cite[Theorem 1 in Section 8.22]{evans2022partial}. 
\end{proof}

\begin{proposition}\label{existence in Q model}
Given that $\Omega \in \R^3$, $\partial \Omega \in C^{1,1}$ and the elastic constants $\eta_i$ satisfy \eqref{eq: positive definite domain}, with fixed $A$, positive $B, C$, $d, e, f>0, q, \sigma, \lambda_1>0, \lambda_2>0, 0<\theta_0<\pi/2,$ the energy functional \eqref{eq:energy_LdG} has at least a global minimizer $(\Tilde{\Q},\Tilde{\dr})$ in $W_{\Q}\times W_\dr$.
\end{proposition}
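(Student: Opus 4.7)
The plan is to apply the direct method of the calculus of variations. Since Lemma \ref{lemma: wsls} already supplies the weak lower semi-continuity of $F$ under the stated parameter regime, the substance of the proof is to establish that $F$ is bounded below on $W_\Q\times W_\dr$ and that any minimizing sequence is bounded in $W^{1,2}(\Omega)\times W^{2,2}(\Omega)$.

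First I would verify lower boundedness. The nematic bulk $f_{bn}$ is a polynomial in $\tr\Q^2$ whose leading term $\frac{C}{4}(\tr\Q^2)^2$ has positive coefficient, and with our choice $e=0$ the smectic bulk $f_{bs}=\frac{d}{2}(\dr)^2+\frac{f}{4}(\dr)^4$ is bounded below by a constant that depends only on $d,f$. All remaining terms, namely $f_{layer}$, $f_{angle}$, and (under \eqref{eq: positive definite domain}) the quadratic part of $f_{el}$ in $\nabla\Q$, are non-negative; the cross term $2\eta_1\sigma\,\epsilon_{ikl}Q_{lj,k}Q_{ij}$ in the reformulation \eqref{eq:refom_felastic} is the only indefinite contribution and is controlled by Young's inequality, $|2\eta_1\sigma\,\epsilon_{ikl}Q_{lj,k}Q_{ij}|\leq \tfrac{C_0}{2}|\nabla\Q|^2+K|\Q|^2$ for some $K>0$, where the $|\Q|^2$ term is absorbed into the bulk quartic. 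Hence $F\geq -M|\Omega|$ for some $M>0$.

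Next I would extract a bounded minimizing sequence. Let $(\Q_k,\dr_k)\subset W_\Q\times W_\dr$ with $F(\Q_k,\dr_k)\to\inf F$. Combining Lemma 1 with the above Young absorption yields
\begin{equation*}
F(\Q_k,\dr_k)\geq \tfrac{C_0}{2}\int_\Omega|\nabla\Q_k|^2 + \tfrac{C}{8}\int_\Omega(\tr\Q_k^2)^2 + \tfrac{f}{8}\int_\Omega\dr_k^4 + \lambda_1\int_\Omega(\Delta\dr_k+q^2\dr_k)^2 - M'|\Omega|,
\end{equation*}
so $\|\nabla\Q_k\|_{L^2}$, $\|\Q_k\|_{L^4}$, and $\|\dr_k\|_{L^4}$ are bounded. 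Expanding the $f_{layer}$ square and using the $L^2$-bound on $\dr_k$ (via $L^4\hookrightarrow L^2$) gives a uniform bound on $\|\Delta\dr_k\|_{L^2}$. Lemma 2 with $p=2$ then upgrades this to a uniform bound on $\|D^2\dr_k\|_{L^2}$, using that $\dr_{bc}\in W^{3/2,2}(\partial\Omega)$. Combined with the fixed Dirichlet traces and Poincaré's inequality, this yields $\|\Q_k\|_{W^{1,2}}+\|\dr_k\|_{W^{2,2}}\leq C$.

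By reflexivity, up to a subsequence $(\Q_k,\dr_k)\rightharpoonup(\tilde\Q,\tilde\dr)$ in $W^{1,2}(\Omega)\times W^{2,2}(\Omega)$; continuity of the trace operator preserves the Dirichlet data, so $(\tilde\Q,\tilde\dr)\in W_\Q\times W_\dr$. Applying Lemma \ref{lemma: wsls} concludes that
\begin{equation*}
F(\tilde\Q,\tilde\dr)\leq \liminf_{k\to\infty}F(\Q_k,\dr_k)=\inf_{W_\Q\times W_\dr}F,
\end{equation*}
so $(\tilde\Q,\tilde\dr)$ is a global minimizer. The main technical obstacle is the coercivity step for $\dr$: the energy only controls $\Delta\dr+q^2\dr$ in $L^2$ rather than the full Hessian, so the whole argument depends on invoking Lemma 2 to convert the $\Delta$-control together with the $L^2$-control (coming from the smectic bulk quartic) into a genuine $W^{2,2}$ bound, which in turn is what makes the weak-compactness argument work and is what pins down the appropriate function space for $\dr$.
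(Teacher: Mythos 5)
Your proposal is correct and follows essentially the same route as the paper: the direct method, with Lemma 1 plus Young's inequality (absorbing the chiral cross term into the bulk quartic) for coercivity in $\Q$, the smectic quartic plus expansion of $f_{layer}$ plus Lemma 2 with $p=2$ for the $W^{2,2}$ bound on $\dr$, and Lemma \ref{lemma: wsls} to pass to the limit. The only cosmetic difference is your aside about taking $e=0$; the paper's argument (and yours) works verbatim for any $e$, since the quartic with $f>0$ dominates the cubic term.
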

\begin{proof}
It is obvious that the admissible space $W_\Q\times W_{\dr}$ is non-empty from the trace theorem \cite{adams2003sobolev}. The elastic and bulk energy of $\Q$ is bounded from below by
\begin{equation}
\begin{aligned}
 &\int_\Omega \left\{ \frac{\eta_1}{2}|\bar{\nabla}\times\Qvec+2\sigma \Qvec|^2+\frac{\eta_2}{2}|\bar{\nabla}\cdot\Qvec|^2+\frac{\eta_{24}}{2}(Q_{ij,k}Q_{ik,j}-Q_{ij,j}Q_{ik,k})+f_{bn}(\Q)\right\} 
\\
&=\int_\Omega \bigg\{\underbrace{\frac{\eta_1}{2}|\nabla \Q|^2+\frac{\eta_2-\eta_{24}}{2}Q_{ij,j}Q_{ik,k}+\frac{\eta_{24}-\eta_1}{2}Q_{ij,k}Q_{ik,j}}_{\geqslant C_0 |\nabla \Qvec|^2 \text{ from } \eqref{eq: positive quadratic nabla Q}}\\
&\quad \quad+\underbrace{2\eta_1\sigma\epsilon_{ikl}Q_{lj,k}Q_{ij}+2\eta_1\sigma^2|\Q|^2}_{\geqslant -\epsilon |\nabla \Q|^2-C_1(\epsilon)|\Qvec|^2, \text{Young's inequality}} +f_{bn}(\Q) \bigg\}  \\
 &\geqslant C_0\Vert \nabla \Q\Vert_{L^2_{\mathbf{S}_0}(\Omega)}^2-\epsilon \Vert \nabla \Q\Vert_{L^2_{\mathbf{S}_0}(\Omega)}^2 +\int_\Omega \left\{f_{bn}(\Q)-C_1(\epsilon) |\Q|^2\right\} .
\end{aligned}
\label{eq: coercive with respect to Q}
\end{equation}
Note that $f_{bn}(\Q)-C_1(\epsilon) |\Q|^2$ is in fact a fourth-order polynomial of $\Q$, which is bounded form below as we take the model parameter $C>0$ in \eqref{f_B}. Thus, there exist two positive constants $C_2(A,B,C,\epsilon), C_3(A,B,C,\epsilon)>0$, such that
\begin{equation}
 \int_\Omega \left\{f_{bn}(\Q)-C_1(\epsilon) |\Q|^2\right\}   \geqslant C_2(A,B,C,\epsilon)\Vert \Q \Vert^2_{L^2_{\mathbf{S}_0}(\Omega)}-C_3(A,B,C,\epsilon).
\end{equation}
By choosing a small enough $0<\epsilon<C_0$, we can deduce from \eqref{eq: coercive with respect to Q} that $f_{el}(\Q)+f_{bn}(\Q)$ in \eqref{eq:energy_LdG} is coercive with respect to $\Q$. 
Now we proceed to proving the coerciveness related to $\dr$. The bulk density $f_{bs}(\dr)$ is a fourth order polynomial of $\dr$ with $f>0$, and $\int_\Omega f_{bs}(\dr)$ is bounded from below, then $\Vert \dr \Vert_{{L^2}(\Omega)}$, $\Vert (\dr)^2 \Vert_{{L^2}(\Omega)}$ are also bounded.  

Note that $\Vert \Delta \dr \Vert^2_{L^2(\Omega)}$ is bounded by the following inequality:
\begin{equation}
\int_\Omega \left|\Delta \dr \right|^2   \leqslant \int_\Omega 2\left(\Delta \dr+q^2 \dr \right)^2  +2\int_\Omega \left(q^2 \dr\right)^2.
\end{equation}
Given the boundedness of both $\Vert \dr \Vert_{L^2(\Omega)}$, $\Vert \Delta  \dr \Vert_{L^2(\Omega)}$, and \eqref{eq: Wp estimation}, we can establish the boundedness of $\Vert \dr \Vert_{W^{2,2}(\Omega)}$ which proves the coerciveness estimate for $\dr$. The weak lower semi-continuity is guaranteed in Lemma \ref{lemma: wsls} and the existence of a global minimizer of \eqref{eq:energy_LdG} follows immediately from the direct methods in the calculus of variations.
\end{proof}

We note that the energy density \( f_{angle} \) is not well-defined on the product space \( W_\Q \times W_{\dr} \), i.e., we cannot guarantee \( f_{angle} <\infty\) since it involves $(\text{Tr}(D^2\dr \Q))^2$, which requires higher regularity than \( W_{\dr} \) typically provides. If we modify the admissible space for $\dr$ to  $\bar{W}_{\dr} = W^{2,4}(\Omega) \cap \left\{\dr: \dr=\dr_{bc} \text{ on } \partial \Omega \right\}$, and the smectic layer energy density, $\lambda_1\left(\Delta\delta\rho + q^2\delta\rho\right)^2$ to $\lambda_1\left(\Delta\delta\rho + q^2\delta\rho\right)^4$, to control $\|\dr\|_{W^{2,4}(\Omega)}$, i.e., propose the following modified energy functional  
\begin{equation}
\begin{aligned}
\bar{F}(\Qvec,\delta\rho) =  &\int_{V}\bigg\{
\frac{\eta_1}{2}|\nabla\times\Qvec+2\sigma \Qvec|^2+\frac{\eta_2}{2}|\nabla\cdot\Qvec|^2+\frac{\eta_{24}}{2}(Q_{ij,k}Q_{ik,j}-Q_{ij,j}Q_{ik,k})\\
& + \frac{A}{2}\textrm{tr}\Qvec^2-\frac{B}{3}\textrm{tr}\Qvec^3+\frac{C}{4}(\textrm{tr}\Qvec^2)^2-f_{B,0} + \frac{d}{2}(\delta\rho)^2-\frac{e}{3}(\delta\rho)^3+\frac{f}{4}(\delta\rho)^4\\
&+ \lambda_1\left(\Delta\delta\rho + q^2\delta\rho\right)^4
 + \lambda_2 \left(\mathrm{tr}\left({D}^2\delta\rho \left(\mathbf{Q}+\frac{\Ivec_3}{3}\right)\right) + q^2\delta\rho\cos^2\theta_0\right)^2\bigg\} ,
\end{aligned}
 \label{modified energy functional}
\end{equation}
then the modified energy functional \eqref{modified energy functional} is well-defined on $\bar{W}_{\dr}$.

\begin{proposition}\label{existence proof}
Under the same conditions in Proposition \ref{existence in Q model}, the modified energy functional $\bar{F}(\Q,\dr)$ defined in \eqref{modified energy functional}, is coercive and w.l.s.c.~in the admissible space $W_\Q \times \bar{W}_{\dr}$, i.e., a global minimizer exists for the modified energy $\bar{F}(\Q,\dr)$ in $W_\Q \times \bar{W}_{\dr}$. 
\end{proposition}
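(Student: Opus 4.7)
The plan is to repeat the three-step direct-method argument of Proposition \ref{existence in Q model}---non-emptiness of the admissible space, coercivity, and weak lower semicontinuity---now adapted to the stronger regularity space $\bar{W}_\dr$. Since the $\Q$-dependent part of $\bar{F}$ coincides with that of $F$, the coercivity estimate \eqref{eq: coercive with respect to Q} and the weak lower semicontinuity of $f_{el}+f_{bn}$ from Lemma \ref{lemma: wsls} transfer verbatim, and non-emptiness follows from the trace theorem as before. As a preliminary check on well-definedness, in three dimensions the Sobolev embedding $W^{1,2}(\Omega)\hookrightarrow L^6(\Omega)$ combined with $D^2\dr\in L^4(\Omega)$ puts $\mathrm{tr}(D^2\dr(\Q+\Ivec_3/3))\in L^{12/5}(\Omega)\subset L^2(\Omega)$ by H\"older's inequality, so $f_{angle}$ is finite on $W_\Q\times\bar{W}_\dr$.

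Coercivity in $\dr$ is the first place where the strengthening of the layer term plays a role. As in the previous proof, $f_{bs}$ with $f>0$ controls $\|\dr\|_{L^4(\Omega)}$; the strengthened layer term $\lambda_1(\Delta\dr+q^2\dr)^4$ then controls $\|\Delta\dr+q^2\dr\|_{L^4(\Omega)}^4$, and the triangle inequality together with the $L^4$-bound on $\dr$ yields a bound on $\|\Delta\dr\|_{L^4(\Omega)}$. Applying the $W^{2,p}$-estimate \eqref{eq: Wp estimation} with $p=4$ (valid since $\partial\Omega\in C^{1,1}$) then produces a bound on $\|D^2\dr\|_{L^4(\Omega)}$, completing coercivity in $\bar{W}_\dr$.

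For weak lower semicontinuity, I would extract a subsequence of a minimizing sequence along which $(\Q_k,\dr_k)\rightharpoonup(\Q,\dr)$ in $W_\Q\times\bar{W}_\dr$. By the Rellich--Kondrachov theorem, $\Q_k\to\Q$ strongly in every $L^p(\Omega)$ with $p<6$, and since $W^{2,4}(\Omega)\hookrightarrow\hookrightarrow W^{1,q}(\Omega)$ for all finite $q$ in three dimensions, $\dr_k\to\dr$ strongly in $W^{1,q}(\Omega)$. The bulk, elastic, and $f_{bs}$ terms are handled exactly as in Lemma \ref{lemma: wsls}. For the modified layer term, $\Delta\dr_k+q^2\dr_k\rightharpoonup\Delta\dr+q^2\dr$ in $L^4(\Omega)$ and $u\mapsto\|u\|_{L^4(\Omega)}^4$ is convex and strongly continuous on $L^4$, hence weakly lower semicontinuous.

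The main obstacle is $f_{angle}$. The key step is to show that the argument of the square, $G_k:=\mathrm{tr}(D^2\dr_k(\Q_k+\Ivec_3/3))+q^2\dr_k\cos^2\theta_0$, converges weakly in $L^2(\Omega)$ to its natural limit $G$. The identity-trace and $\dr$-only pieces are immediate from $\Delta\dr_k\rightharpoonup\Delta\dr$ in $L^4$ and strong $L^2$-convergence of $\dr_k$; the genuine product term $\mathrm{tr}(D^2\dr_k\,\Q_k)$ is handled by a standard weak--strong pairing. Writing for any test $\psi\in L^2(\Omega)$ that $\int\psi\,\mathrm{tr}(D^2\dr_k\,\Q_k)=\int(\psi\Q_k):D^2\dr_k$, one notes that $\psi\Q_k\to\psi\Q$ strongly in $L^{4/3}(\Omega)$ by H\"older with exponents $(2,4)$ and the strong $L^4$-convergence of $\Q_k$ (since $4<6$), while $D^2\dr_k\rightharpoonup D^2\dr$ in $L^4(\Omega)$; the weak--strong pairing in dual spaces then gives the desired convergence of the integral. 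Consequently $G_k\rightharpoonup G$ in $L^2(\Omega)$, and convexity and continuity of the squared $L^2$-norm yield $\liminf\int G_k^2\geqslant\int G^2$. The existence of a global minimizer then follows from the direct method of the calculus of variations.
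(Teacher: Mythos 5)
Your proposal is correct and follows essentially the same route as the paper: well-definedness via the embeddings $W^{1,2}\hookrightarrow L^6$ and $W^{2,4}$-regularity of $\dr$, the identical coercivity chain ($f_{bs}$ controls $\Vert\dr\Vert_{L^4}$, the quartic layer term controls $\Vert\Delta\dr+q^2\dr\Vert_{L^4}$, then the $W^{2,4}$ elliptic estimate \eqref{eq: Wp estimation} with $p=4$), and the direct method. The only difference is that where the paper disposes of weak lower semicontinuity by saying it ``follows similarly as in Lemma \ref{lemma: wsls}'' (which rests on convexity in $D^2\dr$ and the Evans-type theorem), you spell it out explicitly via the weak--strong pairing of $D^2\dr_k\rightharpoonup D^2\dr$ in $L^4$ against $\psi\,\Qvec_k\to\psi\,\Qvec$ in $L^{4/3}$ and weak lower semicontinuity of the $L^2$- and $L^4$-norms, which is a sound and somewhat more self-contained treatment of the coupling term $f_{angle}$.
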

\begin{proof}
Given $(\Q,\dr) \in W_\Q \times \bar{W}_{\dr}$, we have $\Q \in W^{1,2}_{\mathbf{S}_0}(\Omega) \hookrightarrow L^6_{\mathbf{S}_0}(\Omega)$, $\dr \in W^{2,4}(\Omega) \hookrightarrow L^\infty(\Omega)$, and ${D}^2\delta\rho \in L^4(\Omega)$, then
$$\Delta \dr +q^2 \dr \in L^4(\Omega), ~\mathrm{tr}\left({D}^2\dr \left(\Q+\I_3/3\right) \right) \in L^2(\Omega),$$
and thus, $\bar{F}(\Q,\dr)<\infty$. 

Since the energy density with respect to $\Q$ has not been modified in \eqref{modified energy functional}, we need only focus on the coerciveness and weak lower semi-continuity with respect to $\dr$ on $W^{2,4}(\Omega)$. Recall that since the bulk energy $f_{bs}(\dr)$ is a fourth order polynomial of $\dr$ with $f>0$ and $\int_\Omega f_{bs}(\dr)$ is bounded, we can deduce that $\Vert \dr \Vert_{{L^4}(\Omega)}$ are also bounded. Hence,
\begin{equation}
\Vert \Delta \dr \Vert_{L^4(\Omega)}\leqslant \Vert \Delta \dr+q^2 \dr \Vert_{L^4(\Omega)}+\Vert q^2 \dr \Vert_{L^4(\Omega)},
\end{equation}
is also bounded. Given the boundedness of $\Vert \dr \Vert_{L^4(\Omega)}$, $\Vert \Delta  \dr \Vert_{L^4(\Omega)}$ and \eqref{eq: Wp estimation} with $p=4$, we have established the boundedness of $\Vert \dr \Vert_{W^{2,4}(\Omega)}$ which guarantees the coerciveness estimate for $\dr$. The weak lower semi-continuity follows similarly as in Lemma \ref{lemma: wsls}.
\end{proof}
\begin{remark}
In practice, we are interested in energy minimizers and the energy functional can become infinite when \(\dr\) lacks sufficient regularity. However, the minimization process naturally avoids the low-regularity for \(\dr\). Meanwhile, noticing that the Euler--Lagrange equation of the modified energy \eqref{modified energy functional} is of eighth order, which can cause severe ill-conditioning problems in numerical computations, and hence, we use the original energy functional \eqref{eq:energy_LdG} in the remainder of this paper.
\end{remark}

\subsection{Oseen-Frank limit}
The nematic-smectic coupling energy in \eqref{eq:energy_LdG} is intuitively built with the uniaxial assumption on $\Q$ \cite{xia2021structural, xia2025cstar}. Although the minimizer is not always uniaxial in numerical computations, the bulk energy often dominates the total energy, which can lead to an almost uniaxial minimizer. In this subsection, we demonstrate that the uniaxiality assumption can be justified in a suitably defined asymptotic limit, for which the nematic bulk constants dominate all remaining material constants in an appropriately defined rescaled limit (or the so-called Oseen-Frank limit \cite{landau2010beta,liu2018oseen}) in the three-dimensional (3D) case. 

Note that the units of the parameters are $\sigma, q$ (m$^{-1}$); $\eta_i$ (N); $A, B, C, d, e, f$ (Nm$^{-2}$); and $\lambda_i$ (Nm$^2$). By rescaling the system according to $\bar{\x} = \frac{\x}{R}$, $\bar{F}=\frac{F
}{\eta_0 R}$, $\bar{q}=qR$, $\bar{\sigma}=\sigma R$,  $\bar{\lambda}_i=\frac{\lambda_i}{\eta_0 R^2}$, $\eta_i=\frac{\eta_i}{\eta_0}$, $\bar{d}=\frac{dR^2}{\eta_0}$, $\bar{e}=\frac{eR^2}{\eta_0}$, $\bar{f}=\frac{fR^2}{\eta_0}$, $\bar{A}=\frac{AR^2}{\eta_0}$, $\bar{B}=\frac{BR^2}{\eta_0}$,  $\bar{C}=\frac{CR^2}{\eta_0}$, $\bar{f}_{B,0}=\frac{f_{B,0}R^2}{\eta_0}$ with $R=1$m, $\eta_0=1$N, and drop all the bars, the non-dimensionalized energy is given by
\begin{equation}
\begin{aligned}
F(\Qvec,\delta\rho) =  &\int_{V}\bigg\{
\frac{\eta_1}{2}|\nabla\times\Qvec+2\sigma  \Qvec|^2+\frac{\eta_2}{2}|\nabla\cdot\Qvec|^2+\frac{\eta_{24}}{2}(Q_{ij,k}Q_{ik,j}-Q_{ij,j}Q_{ik,k})\\
& + \frac{A}{2}\textrm{tr}\Qvec^2-\frac{B}{3}\textrm{tr}\Qvec^3+\frac{C}{4}(\textrm{tr}\Qvec^2)^2-f_{B,0} +  \frac{d}{2}\delta\rho^2-\frac{e}{3}\delta\rho^3+\frac{f}{4}\delta\rho^4\\
&+ \lambda_1\left(\Delta\delta\rho + q^2\delta\rho\right)^2
 + \lambda_2 \left(\mathrm{tr}\left({D}^2\delta\rho \left(\mathbf{Q}+\frac{\Ivec_3}{3}\right)\right) + q^2\delta\rho\cos^2\theta_0\right)^2\bigg\},
\end{aligned}
 \label{non-dimensionalized}
\end{equation}
where all the parameters and the total energy are dimensionless. Note that we can also make different choices of $R$, so as to compare characteristic material-dependent length scales.

\begin{proposition}
[$\Gamma$-convergence]\label{gammar convergence} By taking a positive parameter $\lambda>0$, we rescale the elastic constants by \( A = \lambda A_0 \), \( B = \lambda B_0 \) and \( C = \lambda C_0 \) with $A_0<0$, and denote the resulting free energy functional \eqref{eq:energy_LdG} as \( F_\lambda \). Keeping other parameters fixed, as $\lambda \rightarrow \infty$, the following statements hold:\\
(a) For any family pair $(\Q_\lambda,\dr_\lambda)$, if there exists a positive constant $C_4$ such that $F_\lambda(\Q_\lambda,\dr_\lambda)\leqslant C_4,$ then $\Q_\lambda \rightharpoonup \Q^*$, at least on a subsequence, for some $\Q^* \in W^{1,2}_{\mathbf{S}_1}(\Omega)$, where 
\begin{equation}
\mathbf{S}_1=\{\Q: \Q=s_+(\n \otimes \n -\I_3/3)\}
\label{uniaxial manifold}
\end{equation}
is the uniaxial manifold.\\
(b)The family of energy functionals, $F_\lambda$, actually $\Gamma$-converges to $F_\infty$ in the weak topology of $W^{1,2}_{\mathbf{S}_0}(\Omega)$, 
i.e., 
\begin{equation}
\begin{aligned}
&{\forall} (\Q_\lambda,\dr_\lambda)\rightharpoonup(\Q,\dr) \in W_\Q\cap W^{1,2}_{\mathbf{S}_1}(\Omega) \times W_{\dr}, \ \liminf_{\lambda \rightarrow \infty}F_\lambda(\Q_\lambda,\dr_\lambda)\geqslant F_\infty(\Q,\dr),\\
&{\exists} (\Q_\lambda,\dr_\lambda)\rightharpoonup(\Q,\dr) \in W_\Q\cap W^{1,2}_{\mathbf{S}_1}(\Omega) \times W_{\dr}, \ \limsup_{\lambda \rightarrow \infty}F_\lambda(\Q_\lambda,\dr_\lambda)\leqslant F_\infty(\Q,\dr),
\end{aligned}
\label{liminf limsup}
\end{equation}
where $W_\Qvec$ and $W_{\dr}$ are defined in \eqref{eq: admissible Dirichlet for u}, $F_\infty$ is the Oseen--Frank limiting energy defined as 
\begin{equation}
\begin{aligned}
&F_\infty(\Qvec,\delta\rho)  = \int_{\Omega}\bigg\{
\frac{\eta_1}{2}|\nabla\times\Qvec+2\sigma \Qvec|^2+\frac{\eta_2}{2}|\nabla\cdot\Qvec|^2\\&+\frac{\eta_{24}}{2}(Q_{ij,k}Q_{ik,j}-Q_{ij,j}Q_{ik,k}) + \frac{d}{2}\delta\rho^2-\frac{e}{3}\delta\rho^3+\frac{f}{4}\delta\rho^4 \\
&+ \lambda_1\left(\Delta\delta\rho + q^2\delta\rho\right)^2
 + \lambda_2 \left(\mathrm{tr}\left({D}^2\delta\rho \left(\mathbf{Q}+\frac{\Ivec_3}{3}\right)\right) + q^2\delta\rho\cos^2\theta_0\right)^2\bigg\},
\end{aligned}
\label{OF limit energy original}
\end{equation}
with $\Q\in W^{1,2}_{\mathbf{S}_1}(\Omega)$ ($\mathbf{S}_1$ is the uniaxial manifold defined in \eqref{uniaxial manifold}).
\end{proposition}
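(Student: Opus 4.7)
My approach follows the standard two-step $\Gamma$-convergence template for a bulk-penalized energy. The key structural observation is that, because $A_0<0$, the rescaled bulk density $\tilde f_{bn}(\Q):=\tfrac{A_0}{2}\tr\Q^2-\tfrac{B_0}{3}\tr\Q^3+\tfrac{C_0}{4}(\tr\Q^2)^2$ (minus the $\lambda$-independent normalization $f_{B,0}^{(0)}$) is pointwise non-negative and vanishes exactly on the uniaxial manifold $\mathcal{N}=\mathbf{S}_1$ of \eqref{uniaxial manifold}, with $s_+$ as in \eqref{eq: s+}. Thus $\lambda\tilde f_{bn}$ acts as a $\lambda$-amplifying penalty that drives any finite-energy sequence onto $\mathbf{S}_1$, and both (a) and (b) follow from this penalty together with the coercivity chain from Proposition \ref{existence in Q model} and the w.l.s.c.~of the non-bulk part from Lemma \ref{lemma: wsls}.

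For (a), I first observe that the coercivity estimate \eqref{eq: coercive with respect to Q} transfers verbatim: all non-bulk terms are $\lambda$-independent and $\lambda\tilde f_{bn}\geqslant 0$ pointwise, so $F_\lambda(\Q_\lambda,\dr_\lambda)\leqslant C_4$ yields uniform bounds on $\|\nabla\Q_\lambda\|_{L^2}$, $\|\Q_\lambda\|_{L^2}$ and, through Lemma 2 and the smectic sector, $\|\dr_\lambda\|_{W^{2,2}}$. Passing to a subsequence, $\Q_\lambda\rightharpoonup\Q^*$ in $W^{1,2}_{\mathbf{S}_0}(\Omega)$, and Rellich--Kondrachov upgrades this to strong $L^p$-convergence for $p<6$ and, after diagonal extraction, to a.e.~convergence. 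Dividing the energy bound by $\lambda$ gives $\int_\Omega\tilde f_{bn}(\Q_\lambda)\leqslant C_4/\lambda\to 0$, so Fatou's lemma applied to the non-negative integrand yields $\int_\Omega\tilde f_{bn}(\Q^*)=0$, and combined with the zero-set characterization this forces $\Q^*\in\mathbf{S}_1$ a.e., i.e.~$\Q^*\in W^{1,2}_{\mathbf{S}_1}(\Omega)$.

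For (b), the $\liminf$ inequality decouples cleanly: along any $(\Q_\lambda,\dr_\lambda)\rightharpoonup(\Q,\dr)$ with $\Q\in W^{1,2}_{\mathbf{S}_1}(\Omega)$, the non-bulk portion is w.l.s.c.~by the verbatim argument of Lemma \ref{lemma: wsls} (the proof handles $f_{el}$, $f_{bs}$, $f_{layer}$, $f_{angle}$ independently of $f_{bn}$), while $\liminf_{\lambda\to\infty}\lambda\int_\Omega\tilde f_{bn}(\Q_\lambda)\geqslant 0$; summing gives the stated inequality. For the recovery sequence I take the constant choice $(\Q_\lambda,\dr_\lambda)\equiv(\Q,\dr)$: since $\Q\in\mathbf{S}_1$ a.e.~implies $\tilde f_{bn}(\Q)=0$ a.e., one has $F_\lambda(\Q,\dr)=F_\infty(\Q,\dr)$ for every $\lambda$, so $\limsup F_\lambda\leqslant F_\infty(\Q,\dr)$ is immediate. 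The principal obstacle is the rigidity step in (a) identifying $\Q^*\in\mathbf{S}_1$, which rests on combining a.e.~subsequential convergence, Fatou applied to $\tilde f_{bn}\geqslant 0$, and the algebraic characterization (under $A_0<0$) of its zero set as exactly $\mathcal{N}$; the secondary technical point is absorbing the sign-indefinite cross term $2\eta_1\sigma\,\epsilon_{ikl}Q_{lj,k}Q_{ij}$ of $f_{el}$ via Young's inequality, which transfers directly from \eqref{eq: coercive with respect to Q} since the quartic growth of $\lambda\tilde f_{bn}$ swamps any $|\Q|^2$ overshoot once $\lambda$ is large.
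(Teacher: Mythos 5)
Your proposal is correct and follows essentially the same route as the paper: non-negativity of the rescaled bulk density gives the $\lambda$-uniform coercivity and forces $\int_\Omega \bar f_{bn}(\Q_\lambda)\to 0$, the limit is identified as uniaxial with order parameter $s_+$, the liminf inequality comes from dropping the non-negative penalty plus weak lower semicontinuity of the remaining terms, and the constant sequence serves as the recovery sequence. The only cosmetic difference is that you identify $\Q^*\in\mathbf{S}_1$ via Rellich--Kondrachov, a.e.\ convergence and Fatou, whereas the paper cites the weak lower semicontinuity of the bulk integral directly; these are interchangeable.
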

\begin{proof}
By defining
\begin{equation*}
\bar{f}_{bn}(\Q):= \frac{A_0}{2}\tr \Q^2 - \frac{B_0}{3} \tr \Q^3 + \frac{C_0}{4}(\tr \Q^2)^2-f_{B,0},
\end{equation*}
we have $f_{bn}(\Q)=\lambda \bar{f}_{bn}(\Q)$.

(a) It is reasonable to assume that $\lambda>1$. Since the layer thickness energy and coupling energy are non-negative, from \eqref{eq: coercive with respect to Q} we have 
\begin{equation}
C_0\Vert \nabla \Q_\lambda\Vert_{L^{2}_{\mathbf{S}_0}(\Omega)}^2-\epsilon \Vert \nabla \Q_\lambda \Vert_{L^{2}_{\mathbf{S}_0}(\Omega)}^2-C_1(\epsilon) \Vert \Q_\lambda \Vert^2_{L_{\mathbf{S}_0}^2(\Omega)}  + \lambda \int_\Omega \bar{f}_{bn}(\Q_\lambda) \leqslant C_4,
\label{eq: boundness of sequence in (a)}
\end{equation}
where $\epsilon<C_0$. As the energy density $\bar{f}_{bn}(\Q)$ is a fourth-order polynomial of $\Q$ with positive leading term, we can infer the uniform boundedness of $\Vert \Q_\lambda \Vert_{W^{1,2}_{\mathbf{S}_0}(\Omega)}$ by the inequality \eqref{eq: boundness of sequence in (a)}. Then we have a subsequence, $\Q_\lambda \rightharpoonup \Q$ in $W_\Q$, and \eqref{eq: boundness of sequence in (a)} reduces to
\begin{equation}
\lambda \int_\Omega \bar{f}_{bn}(\Q_\lambda) \leqslant C_5 \  \text{and} \ \lim_{\lambda \rightarrow \infty}\int_\Omega \bar{f}_{bn}(\Q_\lambda) =0.
\label{bulk dissipate}
\end{equation}
By the w.l.s.c.~of $\int_\Omega \bar{f}_{bn}(\Q)$ and $\Q_\lambda \rightharpoonup \Q$ in $W_\Q$ together with \eqref{bulk dissipate}, we deduce that $\Q$ is uniaxial almost everywhere, with constant order parameter $s_+$, i.e., is a minimizer of $f_{bn}$ for $A_0<0$. \\ 
(b) The liminf condition in the first line of \eqref{liminf limsup} can be directly obtained by 
\begin{equation}
\liminf_{\lambda \rightarrow \infty}F_\lambda(\Q_\lambda,\dr_\lambda) \geqslant \liminf_{\lambda \rightarrow \infty}F_\infty(\Q_\lambda,\dr_\lambda)\geqslant F_\infty (\Q,\dr),
\end{equation}
where the first inequality stems from the non-negative bulk energy density $\bar{f}_{bn}(\Q)$, and the second inequality follows from the weak lower semicontinuity of $F_\infty$.

For the limsup condition in the second line of \eqref{liminf limsup}, we can take the sequence $(\Q_\lambda,\dr_\lambda)=(\Q,\dr)$.  
Then 
\begin{equation*}
\limsup_{\lambda \rightarrow \infty}F_\lambda(\Q_\lambda,\dr_\lambda) = \limsup_{\lambda \rightarrow \infty}F_\lambda(\Q,\dr)=F_\infty(\Q,\dr),
\end{equation*}
where $\Q \in W_\Q\cap W^{1,2}_{\mathbf{S}_1}(\Omega)$ by choice, which ensures that the nematic bulk energy $\bar{f}_{bn}(\Q)$ vanishes.
\end{proof}

We now obtain a direct corollary from the $\Gamma$-convergence in Proposition \ref{gammar convergence}.

\begin{corollary}\label{corollary}
Let $(\Q_\lambda^*,\dr_\lambda^*)$ be a global minimizer of $F_\lambda$ in $W_\Q \times W_{\dr}$. Then $(\Q_\lambda^*,\dr_\lambda^*)\rightharpoonup (\Q_\infty^*,\dr_\infty^*)$, $F_\lambda(\Q_\lambda^*,\dr_\lambda^*)\rightarrow F_\infty(\Q^*_\infty,\dr^*_\infty)$, where $(\Q^*_\infty,\dr^*_\infty)$ is the global minimizer of $F_\infty$ defined in \eqref{OF limit energy original}. 
\end{corollary}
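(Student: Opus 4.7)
The plan is to run the standard recipe for passing from $\Gamma$-convergence plus equi-coercivity to convergence of minimizers. First I would establish a uniform energy bound on the minimizing family. Picking any fixed competitor $(\Q_0,\dr_0)\in W_\Q\cap W^{1,2}_{\mathbf{S}_1}(\Omega)\times W_{\dr}$ (which is non-empty because $\Q_{bc}$ is uniaxial with scalar order parameter $s_+$), the bulk contribution $\lambda\int_\Omega \bar{f}_{bn}(\Q_0)$ vanishes, so $F_\lambda(\Q_0,\dr_0)$ is independent of $\lambda$. By minimality, $F_\lambda(\Q_\lambda^*,\dr_\lambda^*)\leqslant F_\lambda(\Q_0,\dr_0)\leqslant C_4$ uniformly in $\lambda$, which puts the family in the hypothesis of part (a) of Proposition \ref{gammar convergence}. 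Thus, along a subsequence, $\Q_\lambda^*\rightharpoonup \Q_\infty^*$ in $W^{1,2}_{\mathbf{S}_0}(\Omega)$ with $\Q_\infty^*\in W^{1,2}_{\mathbf{S}_1}(\Omega)$.

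Next I would establish compactness for $\dr_\lambda^*$. Since the coupling, layer, and smectic-bulk terms in $F_\lambda$ are identical to those in $F_\infty$, and $F_\lambda(\Q_\lambda^*,\dr_\lambda^*)$ is uniformly bounded, the coercivity argument used in Proposition \ref{existence in Q model} applies verbatim to $\dr_\lambda^*$: bound $\|\dr_\lambda^*\|_{L^2}$ via the quartic $f_{bs}$, then $\|\Delta \dr_\lambda^*\|_{L^2}$ via $\lambda_1(\Delta \dr + q^2\dr)^2$, and finally $\|\dr_\lambda^*\|_{W^{2,2}}$ via \eqref{eq: Wp estimation}. Extracting a further subsequence, $\dr_\lambda^*\rightharpoonup \dr_\infty^*$ in $W_{\dr}$.

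The remaining step is to identify the limit as a minimizer of $F_\infty$ and to upgrade weak to energy convergence. For any competitor $(\Q,\dr)\in W_\Q\cap W^{1,2}_{\mathbf{S}_1}(\Omega)\times W_{\dr}$, minimality of $(\Q_\lambda^*,\dr_\lambda^*)$ together with the limsup half of \eqref{liminf limsup} applied to the constant recovery sequence $(\Q,\dr)$ gives
\begin{equation*}
\limsup_{\lambda\to\infty}F_\lambda(\Q_\lambda^*,\dr_\lambda^*)\leqslant \limsup_{\lambda\to\infty}F_\lambda(\Q,\dr)=F_\infty(\Q,\dr).
\end{equation*}
The liminf half of \eqref{liminf limsup} applied to $(\Q_\lambda^*,\dr_\lambda^*)\rightharpoonup (\Q_\infty^*,\dr_\infty^*)$ then yields $F_\infty(\Q_\infty^*,\dr_\infty^*)\leqslant \liminf F_\lambda(\Q_\lambda^*,\dr_\lambda^*)\leqslant F_\infty(\Q,\dr)$, so $(\Q_\infty^*,\dr_\infty^*)$ is a global minimizer of $F_\infty$ on $W_\Q\cap W^{1,2}_{\mathbf{S}_1}(\Omega)\times W_{\dr}$, and choosing $(\Q,\dr)=(\Q_\infty^*,\dr_\infty^*)$ in the above string sandwiches $F_\lambda(\Q_\lambda^*,\dr_\lambda^*)$ between quantities tending to $F_\infty(\Q_\infty^*,\dr_\infty^*)$. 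A standard subsequence argument then promotes convergence of the whole family.

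The main technical obstacle I anticipate is verifying that $\Q_\infty^*$ indeed lies in the effective admissible class $W_\Q\cap W^{1,2}_{\mathbf{S}_1}(\Omega)$: the uniaxial-almost-everywhere constraint comes from part (a) of Proposition \ref{gammar convergence}, but one must also confirm that the Dirichlet trace is preserved under the weak $W^{1,2}$ convergence (immediate by continuity of the trace operator) and that the recovery sequence used in the limsup can in fact be taken constant, which is justified here precisely because $F_\lambda$ and $F_\infty$ differ only by the non-negative term $\lambda\int_\Omega \bar{f}_{bn}(\Q)$, which vanishes identically on $W^{1,2}_{\mathbf{S}_1}(\Omega)$.
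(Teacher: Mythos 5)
Your proposal is correct and follows essentially the same route as the paper's proof: a uniform energy bound from a fixed uniaxial competitor, subsequential weak compactness via Proposition \ref{gammar convergence}(a) (you additionally spell out the coercivity argument for $\dr_\lambda^*$, which the paper leaves implicit), and then the standard sandwich between the $\Gamma$-liminf inequality applied to the minimizers and the limsup bound from the constant recovery sequence, yielding both minimality of the limit and energy convergence. The only caveat is that, as in the paper, the convergence of the configurations is genuinely only subsequential unless the minimizer of $F_\infty$ is unique, so the final ``whole family'' upgrade applies to the energies rather than to $(\Q_\lambda^*,\dr_\lambda^*)$ itself.
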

\begin{proof}
We fix a configuration $(\bar{\Q},\bar{\dr})\in W_\Q \cap W^{1,2}_{\mathbf{S}_1}(\Omega) \times W_\dr$, whose nematic bulk energy vanishes, and hence, for all $\lambda$,
\begin{equation}\label{uniform boundness}
F_\lambda(\Q_\lambda^*,\dr_\lambda^*)\leqslant F_\lambda(\bar{\Q},\bar{\dr})=F_\infty(\bar{\Q},\bar{\dr}),
\end{equation}
where the first inequality holds because $(\Q_\lambda^*,\dr_\lambda^*)$ is the global minimizer of $F_\lambda$.
Actually, this inequality represents that $F_\lambda(\Q_\lambda^*,\dr_\lambda^*)$ has a uniform upper bound with respect to $\lambda$. 
By Proposition \ref{gammar convergence}(a) and \eqref{uniform boundness}, there exists a limit configuration $(\Q_\lambda^*,\dr_\lambda^*)\rightharpoonup (\Q_\infty^*,\dr_\infty^*)$, where $\Q_\infty^* \in W^{1,2}_{\mathbf{S}_1}(\Omega)$, and we deduce

\begin{align}
\limsup_{\lambda\rightarrow \infty}F_\lambda(\Q_\lambda^*,\dr_\lambda^*)\leqslant \limsup_{\lambda\rightarrow \infty}F_\lambda(\Q_\infty^*,\dr_\infty^*)=F_\infty(\Q_\infty^*,\dr_\infty^*),
\label{eq: sup term}
\\
\liminf_{\lambda\rightarrow \infty}F_\lambda(\Q_\lambda^*,\dr_\lambda^*)\geqslant \liminf_{\lambda\rightarrow \infty} F_\infty(\Q_\lambda^*,\dr_\lambda^*) \geqslant F_\infty(\Q_\infty^*,\dr_\infty^*),
\label{eq: inf term}
\end{align}
where the inequality in \eqref{eq: sup term} follows from that $(\Q_\lambda^*,\dr_\lambda^*)$ is the global minimizer of $F_\lambda$, the equality in \eqref{eq: sup term} follows from that $\Q_\infty^*$ is uniaxial. The first inequality in \eqref{eq: inf term} follows from the fact that the bulk energy $f_{bn}$ is non-negative, while the last inequality follows from the weak lower semi-continuity, which leads to $F_\lambda(\Q_\lambda^*,\dr_\lambda^*)\rightarrow F_\infty(\Q^*_\infty,\dr^*_\infty)$.

For any $(\Q, \dr) \in W_\Q \cap W^{1,2}_{\mathbf{S}_1}(\Omega)\times W_\dr$, we take the sequence guaranteed by Proposition \ref{gammar convergence}(b) to get
\begin{equation*}
\begin{aligned}
F_\infty(\Q, \dr)\geqslant \limsup_{\lambda \rightarrow \infty}F_\lambda(\Q_\lambda,\dr_\lambda) &\geqslant  \limsup_{\lambda \rightarrow \infty}F_\lambda(\Q_\lambda^*,\dr_\lambda^*)\\
&\geqslant \liminf_{\lambda \rightarrow \infty}F_\lambda(\Q_\lambda^*,\dr_\lambda^*) \geqslant F_\infty(\Q_\infty^*, \dr_\infty^*),
\end{aligned}
\end{equation*}
which implies that $(\Q_\infty^*, \dr_\infty^*)$ is a global minimizer of $F_\infty$. Here, the second inequality comes from the assumption that $(\Q_\lambda^*,\dr_\lambda^*)$  is a minimizer of $F_\lambda$, and the last inequality utilizes Proposition \ref{gammar convergence}(b).
\end{proof}

So far, we have emphasized the weak convergence of global energy minimizers to a uniaxial limiting map, belonging to the vacuum manifold of the nematic bulk energy, as \(\lambda\) approaches infinity. In the following Proposition, we demonstrate that there exists a subsequence of this weakly convergent sequence that can strongly converge to the uniaxial limiting solution.

\begin{proposition}[Strong convergence]
\label{proposition: strong convergence} Let $\Omega \in \R^3$ be a simply-connected bounded open set with $C^{1,1}$ boundary. Let $(\Q_\lambda^*,\dr_\lambda^*)$ be a global minimizer of $F_\lambda$ in the admissible space $W_\Q \times W_{\dr}$. There exists a sequence $\lambda_k \rightarrow \infty$ such that $(\Q_{\lambda_k}^*, \dr_{\lambda_k}^*) \rightarrow (\Q_\infty^*,\dr_\infty^*)$ strongly in $W_\Q \times W_\dr$, where $(\Q_\infty^*,\dr_\infty^*)$ is a global minimizer of $F_\infty$.
\end{proposition}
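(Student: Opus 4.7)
The plan is to upgrade the weak convergence from Corollary~\ref{corollary} to strong convergence by establishing norm convergence of the top-order quadratic parts of $F_\infty$, which in a Hilbert space combines with weak convergence to give strong convergence. First I would extract from Corollary~\ref{corollary} a subsequence $\lambda_k\to\infty$ along which $(\Q_{\lambda_k}^*,\dr_{\lambda_k}^*)\rightharpoonup(\Q_\infty^*,\dr_\infty^*)$ in $W_\Q\times W_\dr$ and $F_{\lambda_k}(\Q_{\lambda_k}^*,\dr_{\lambda_k}^*)\to F_\infty(\Q_\infty^*,\dr_\infty^*)$. Since $F_{\lambda_k}=F_\infty+\lambda_k\int_\Omega\bar f_{bn}(\Q)\ge F_\infty$ (because $\bar f_{bn}\ge 0$ thanks to the offset $f_{B,0}$) and $F_\infty$ is weakly lower semi-continuous by the argument of Lemma~\ref{lemma: wsls}, the sandwich
\[
F_\infty(\Q_\infty^*,\dr_\infty^*)\le\liminf F_\infty(\Q_{\lambda_k}^*,\dr_{\lambda_k}^*)\le\limsup F_\infty(\Q_{\lambda_k}^*,\dr_{\lambda_k}^*)\le\lim F_{\lambda_k}(\Q_{\lambda_k}^*,\dr_{\lambda_k}^*)=F_\infty(\Q_\infty^*,\dr_\infty^*)
\]
forces $F_\infty(\Q_{\lambda_k}^*,\dr_{\lambda_k}^*)\to F_\infty(\Q_\infty^*,\dr_\infty^*)$.

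Second, I would decompose $F_\infty=F_{pd}+R+F_{bs}+F_{layer}+F_{angle}$, where
\[
F_{pd}(\Q)=\int_\Omega\Big\{\tfrac{\eta_1}{2}|\nabla\Q|^2+\tfrac{\eta_2-\eta_{24}}{2}Q_{ij,j}Q_{ik,k}+\tfrac{\eta_{24}-\eta_1}{2}Q_{ij,k}Q_{ik,j}\Big\}
\]
is the positive-definite quadratic form from Lemma~1 and $R(\Q)=\int_\Omega\{2\eta_1\sigma\epsilon_{ikl}Q_{lj,k}Q_{ij}+2\eta_1\sigma^2|\Q|^2\}$ is the remaining (non-coercive) part of the elastic energy. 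By Rellich--Kondrachov in 3D, $\Q_{\lambda_k}^*\to\Q_\infty^*$ strongly in $L^p_{\mathbf{S}_0}(\Omega)$ for $p<6$ and $\dr_{\lambda_k}^*\to\dr_\infty^*$ strongly in $W^{1,q}(\Omega)$ for $q<6$, so $F_{bs}(\dr_{\lambda_k}^*)\to F_{bs}(\dr_\infty^*)$ (continuous on $L^4$) and $R(\Q_{\lambda_k}^*)\to R(\Q_\infty^*)$ (the chiral cross term pairs a weakly $L^2$-convergent factor with a strongly $L^2$-convergent factor). Subtracting these from the convergence of $F_\infty$ leaves $F_{pd}+F_{layer}+F_{angle}$ convergent along $\lambda_k$; each of these three is non-negative and weakly lower semi-continuous, and the elementary fact that if $G_1,\dots,G_m$ are non-negative w.l.s.c.~functionals with $\sum_i G_i(x_n)\to\sum_i G_i(x)$ then $G_i(x_n)\to G_i(x)$ for each $i$, forces $F_{pd}(\Q_{\lambda_k}^*)\to F_{pd}(\Q_\infty^*)$, $F_{layer}(\dr_{\lambda_k}^*)\to F_{layer}(\dr_\infty^*)$, and $F_{angle}(\Q_{\lambda_k}^*,\dr_{\lambda_k}^*)\to F_{angle}(\Q_\infty^*,\dr_\infty^*)$.

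Finally, I would expand $F_{layer}(\dr_{\lambda_k}^*)=\lambda_1\int\{(\Delta\dr_{\lambda_k}^*)^2+2q^2\Delta\dr_{\lambda_k}^*\cdot\dr_{\lambda_k}^*+q^4(\dr_{\lambda_k}^*)^2\}$: the cross and mass summands converge by strong $L^2$ convergence of $\dr_{\lambda_k}^*$ paired with weak $L^2$ convergence of $\Delta\dr_{\lambda_k}^*$, yielding $\|\Delta\dr_{\lambda_k}^*\|_{L^2}\to\|\Delta\dr_\infty^*\|_{L^2}$; weak plus norm convergence in the Hilbert space $L^2$ then upgrades this to strong convergence $\Delta\dr_{\lambda_k}^*\to\Delta\dr_\infty^*$, and applying Lemma~2 to $\dr_{\lambda_k}^*-\dr_\infty^*$ (whose boundary trace vanishes) delivers strong $W^{2,2}$ convergence. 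In parallel, convergence of $F_{pd}(\Q_{\lambda_k}^*)$ together with the equivalence $F_{pd}(\Q)\simeq\|\nabla\Q\|_{L^2}^2$ from Lemma~1 gives $\|\nabla\Q_{\lambda_k}^*\|_{L^2}\to\|\nabla\Q_\infty^*\|_{L^2}$ and hence strong $W^{1,2}$ convergence. The main obstacle is the decomposition in the second step: $f_{el}$ itself is not non-negative, so it must be split into a positive-definite coercive core $F_{pd}$ and a compact remainder $R$ whose convergence comes from Rellich--Kondrachov rather than from lower semi-continuity; the angular coupling $F_{angle}$, which mixes $D^2\dr$ with $\Q$, is awkward to analyse directly, but we circumvent this by invoking it only as one summand in the ``each summand converges'' argument, so that the strong $W^{2,2}$ control of $\dr$ is extracted purely from the simpler piece $F_{layer}$.
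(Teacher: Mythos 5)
Your proposal is correct and follows essentially the same route as the paper: use the energy convergence of Corollary~\ref{corollary}, strip off the compactly convergent lower-order terms (the chiral cross term, $|\Q|^2$ and $f_{bs}$), deduce convergence of the coercive quadratic pieces from weak lower semi-continuity of each summand, and upgrade weak to strong convergence via weak-plus-norm convergence in Hilbert space together with the elliptic estimate \eqref{eq: Wp estimation} applied to $\dr_{\lambda_k}^*-\dr_\infty^*$. One phrase to tighten: equivalence of $F_{pd}(\Q)$ with $\|\nabla\Q\|_{L^2}^2$ does not by itself transfer convergence of one norm to the other; instead argue, as the paper does with $\|\P\nabla\Q\|_{L^2}$, that $F_{pd}$ is the square of an inner-product norm, so weak convergence of $\nabla\Q_{\lambda_k}^*$ plus convergence of $F_{pd}(\Q_{\lambda_k}^*)$ gives strong convergence in that norm and hence in $L^2$.
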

\begin{proof}
By the w.l.s.c. property stated in Lemma \ref{lemma: wsls}, we deduce that
\begin{equation}\label{eq: limsup other density}
\begin{aligned}
&\liminf_{\lambda \rightarrow \infty} \int_\Omega \left\{f_{layer}(\dr_\lambda^*)+f_{angle}(\Q_\lambda^*,\dr_\lambda^*)+f_{bs}(\dr_\lambda^*) \right\} \\
&\geqslant \int_\Omega  \left\{f_{layer}(\dr_\infty^*)+f_{angle}(\Q_\infty^*,\dr_\infty^*)+f_{bs}(\dr_\infty^*) \right\},\\ 
&\liminf_{\lambda \rightarrow \infty} \int_\Omega \left\{f_{el}(\Q_\lambda^*)+f_{angle}(\Q_\lambda^*,\dr_\lambda^*)+f_{bs}(\dr_\lambda^*) \right\} \\
&\geqslant \int_\Omega  \left\{f_{el}(\Q_\infty^*)+f_{angle}(\Q_\infty^*,\dr_\infty^*)+f_{bs}(\dr_\infty^*) \right\},\\ 
&\liminf_{\lambda \rightarrow \infty} \int_\Omega \lambda \bar{f}_{bn}(\Q_\lambda^*)\geqslant 0. 
\end{aligned}
\end{equation}
By Corollary \ref{corollary}, we have $\lim_{\lambda \rightarrow \infty} F_\lambda(\Q_\lambda^*,\dr_\lambda^*)=F_\infty(\Q^*_\infty,\dr^*_\infty)$, i.e.,
\begin{equation}
\begin{aligned}
&\lim_{\lambda \rightarrow \infty} \int_\Omega \left\{f_{el}(\Q_\lambda^*)+\lambda \bar{f}_{bn}(\Q_\lambda^*)+f_{bs}(\dr_\lambda^*)+f_{layer}(\dr_\lambda^*)+f_{angle}(\Q_\lambda^*,\dr_\lambda^*) \right\} \\
&= \int_\Omega  \left\{f_{el}(\Q_\infty^*)+f_{bs}(\dr_\infty^*)+f_{layer}(\dr_\infty^*)+f_{angle}(\Q_\infty^*,\dr_\infty^*) \right\}.
\end{aligned}
\label{eq: limsup total energy}
\end{equation}
Together with \eqref{eq: limsup other density}, we have
\begin{equation}
\begin{aligned}
&\limsup_{\lambda \rightarrow \infty} \int_\Omega f_{el}(\Q^*_\lambda) &\leqslant \int_\Omega f_{el}(\Q^*_\infty),
\end{aligned}
\label{lower}
\end{equation}
and $\limsup_{\lambda \rightarrow \infty} \int_\Omega f_{layer}(\dr^*_\lambda) \leqslant \int_\Omega f_{layer}(\dr^*_\infty)$
\begin{footnote}{Note that $\limsup x_\lambda-y_\lambda \leqslant \limsup x_\lambda-\liminf y_\lambda$, we get
\begin{equation}
\begin{aligned}
&\limsup_{\lambda \rightarrow \infty} \int_\Omega f_{el}(\Q^*_\lambda) \\
&\leqslant \limsup_{\lambda \rightarrow \infty} \int_\Omega \left\{f_{el}(\Q_\lambda^*)+\lambda \bar{f}_{bn}(\Q_\lambda^*)+f_{bs}(\dr_\lambda^*)+f_{layer}(\dr_\lambda^*)+f_{angle}(\Q_\lambda^*,\dr_\lambda^*) \right\}\\ 
&\quad -\liminf_{\lambda \rightarrow \infty}  \int_\Omega \left\{f_{layer}(\dr_\lambda^*)+f_{angle}(\Q_\lambda^*,\dr_\lambda^*)+f_{bs}(\dr_\lambda^*) \right\}-\liminf_{\lambda \rightarrow \infty} \int_\Omega \lambda \bar{f}_{bn}(\Q_\lambda^*)\\
&\leqslant \int_\Omega f_{el}(\Q^*_\infty),
\end{aligned}
\end{equation}
and similarly $\limsup_{\lambda \rightarrow \infty} \int_\Omega f_{layer}(\dr^*_\lambda) \leqslant \int_\Omega f_{layer}(\dr^*_\infty)$.
} \end{footnote}.
By the w.l.s.c.~property of $\int_\Omega f_{el}(\Q^*_\lambda)$ and $\int_\Omega f_{layer}(\dr^*_\lambda)$, we have the following: 
\begin{equation}
\begin{aligned}
\liminf_{\lambda \rightarrow \infty} \int_\Omega f_{el}(\Q^*_\lambda) &\geqslant \int_\Omega f_{el}(\Q^*_\infty),\\
\liminf_{\lambda \rightarrow \infty} \int_\Omega f_{layer}(\dr^*_\lambda) &\geqslant \int_\Omega f_{layer}(\dr^*_\infty).
\end{aligned}
\label{upper}
\end{equation}
Together with \eqref{lower} and \eqref{upper}, we have the following equalities:
\begin{equation}
\begin{aligned}
\lim_{\lambda \rightarrow \infty} \int_\Omega f_{el}(\Q^*_\lambda) &= \int_\Omega f_{el}(\Q^*_\infty), \\
\lim_{\lambda \rightarrow \infty} \int_\Omega f_{layer}(\dr^*_\lambda) &= \int_\Omega f_{layer}(\dr^*_\infty).
\end{aligned}
\label{equal}
\end{equation}
Recalling that
\begin{equation*}
\begin{aligned}
\int_\Omega f_{el}(\Q) =\int_\Omega & \frac{\eta_1}{2}|\nabla \Q|^2+\frac{\eta_2-\eta_{24}}{2}Q_{ij,j}Q_{ik,k}+\frac{\eta_{24}-\eta_1}{2}Q_{ij,k}Q_{ik,j} \\&+2\eta_1\sigma\epsilon_{ikl}Q_{lj,k}Q_{ij}+2\eta_1\sigma^2|\Q|^2,
\end{aligned}
\end{equation*}
we define 
\begin{equation}
\int_\Omega  \frac{\eta_1}{2}|\nabla \Q|^2+\frac{\eta_2-\eta_{24}}{2}Q_{ij,j}Q_{ik,k}+\frac{\eta_{24}-\eta_1}{2}Q_{ij,k}Q_{ik,j} =\Vert \P \nabla \Q \Vert^2_{L^2_{\mathbf{S}_0}(\Omega)},
\end{equation}
where $\P$ is a positive definite matrix. By \eqref{equal} and the fact that $\Q_\lambda^* \rightarrow \Q_\infty^*$, $\nabla \Q_\lambda^* \rightharpoonup \nabla \Q_\infty^*$, we obtain that
\begin{equation}
\Vert \P \nabla \Q_\lambda^* \Vert^2_{L^2_{\mathbf{S}_0}(\Omega)} \rightarrow \Vert \P \nabla \Q_\infty^* \Vert^2_{L^2_{\mathbf{S}_0}(\Omega)}.
\end{equation}
Together with $\Q_\lambda^* \rightharpoonup \Q_\infty^*$, we have 
\begin{equation}
\begin{aligned}
&\lim_{\lambda \rightarrow \infty}\Vert \P (\nabla \Q_\lambda^*-\nabla \Q_\infty^*) \Vert^2_{L^2_{\mathbf{S}_0}(\Omega)}
\\
&=\lim_{\lambda \rightarrow \infty}\Vert \P \nabla \Q_\lambda^* \Vert^2_{L^2_{\mathbf{S}_0}(\Omega)}+\Vert \P \nabla \Q_\infty^* \Vert^2_{L^2_{\mathbf{S}_0}(\Omega)}-\lim_{\lambda \rightarrow \infty}2\int_\Omega \langle \P \nabla \Q_\lambda^*,\P \nabla \Q_\infty^* \rangle=0.
\label{strong convergence}
\end{aligned}
\end{equation}
That is $\nabla \Q_\lambda^* \rightarrow \nabla \Q_\infty^*\text{ in } L^2_{\mathbf{S}_0}(\Omega)$.

By the compact embedding of $W^{1,2}(\Omega)$ into $L^2(\Omega)$, one obtains $ \Q_\lambda^* \rightarrow  \Q_\infty^*\text{ in } L^2_{\mathbf{S}_0}(\Omega)$. Combined with \eqref{strong convergence}, there exists a subsequence of global energy minimizers, $\Q_\lambda^*$, that strongly converges to $\Q_\infty^*$ as $\lambda\to\infty$.

By Corollary \ref{corollary}, we have $\Delta \dr_\lambda^* \rightharpoonup \Delta \dr_\infty^*$ in $L^2(\Omega)$, $\dr_\lambda^* \rightarrow \dr_\infty^*$ strongly in $L^4(\Omega)$ from $\dr_\lambda^* \rightharpoonup  \dr_\infty^*$ in $W^{2,2}(\Omega)$, and $\lim_{\lambda \rightarrow \infty} \int_\Omega f_{layer}(\dr^*_\lambda) = \int_\Omega f_{layer}(\dr^*_\infty)$, i.e., $\Delta \dr_\lambda^*+q^2 \dr_\lambda^*\rightharpoonup \Delta\dr_\infty^* + q^2\dr_\infty^*$ in $L^2(\Omega)$, and $\|\Delta \dr_\lambda^*+q^2 \dr_\lambda^*\|\rightarrow \|\Delta\dr_\infty^* + q^2\dr_\infty^*\|$, then we have that 
\begin{equation}
\Vert\Delta \dr_\lambda^* + q^2\dr_\lambda^*- (\Delta\dr_\infty^* + q^2\dr_\infty^*)\Vert_{L_2(\Omega)}\rightarrow 0.
\end{equation}
Hence,
\begin{equation*}
\Vert \Delta \dr_\lambda^* -\Delta \dr_\infty^* \Vert_{L_2(\Omega)} \rightarrow 0,
\end{equation*}
which, together with \eqref{eq: Wp estimation}, leads to
\begin{equation}
\Vert {D}^2 \dr_\lambda^* - {D}^2 \dr_\infty^* \Vert_{L^2(\Omega)} \rightarrow 0.
\end{equation}
Hence, $\dr_\lambda^* \rightarrow \dr_\infty^*$ strongly in $H^2(\Omega)$. 
\end{proof}

\begin{remark}
From Proposition \ref{proposition: strong convergence}, one can study the qualitative properties of minimizers of $F_\lambda$ in terms of the minimizers of $F_\infty$, for large enough $\lambda$. In the subsequent sections, we show that $F_\infty$ exhibits a nematic-smectic phase transition at $T^*$, i.e., its global minimizer is nematic for $T>T^*$ and smectic for $T<T^*$. For sufficiently large $\lambda$, Proposition \ref{proposition: strong convergence} ensures that the minimizer of $F_\lambda$ is sufficiently close to that of $F_\infty$ and hence, one can expect that $F_\lambda$ undergoes a similar nematic-smectic phase transition for sufficiently large $\lambda$.
\end{remark}

We study the qualitative properties of minimizers of $F_\infty$ in the remainder of the paper.

\subsection{Helical configuration in the Oseen--Frank limit}
In the previous section, we demonstrate that the global minimizer of \eqref{eq:energy_LdG} with $A<0$ and $|A|, B, C \gg |d|, e,f,\lambda_1, \lambda_2$ can be approximated (to leading order) by the uniaxial limiting map, $\Q=s_+(\n \otimes \n - \I_3/3)$ with $\n=[n_1,n_2,n_3]^\top \in \mathbb{S}^2$. We substitute this limiting form into the Oseen--Frank limiting energy, $F_\infty(\Q,\dr)$ \eqref{OF limit energy original}, to obtain the following functional of $(\n,\dr)$:
\begin{equation}\label{eq:energy-3d vector}
\begin{aligned}
F(\n,\dr)=& \int_\Omega \bigg\{k_1(\nabla \cdot \n)^2+k_2|\n \cdot (\nabla \times \n)+\sigma|^2 +k_3|\n \times (\nabla \times \n)|^2\\&+(k_2+k_4)(\textbf{tr}(\nabla \n)^2-(\nabla \cdot \n)^2)
+\frac{d}{2}\delta\rho^2+\frac{f}{4}\delta\rho^4\\
&+ \lambda_1\left(\Delta\delta\rho + q^2\delta\rho\right)^2
 + \lambda_2 \left(\mathrm{tr}\left({D}^2\delta\rho \left(\n \otimes \n\right)\right) + q^2\delta\rho\cos^2\theta_0\right)^2\bigg\},
\end{aligned}
\end{equation}
where 
\begin{equation}\label{relationship between Q and v parameters}
k_1=k_3=\frac{s_+^2(\eta_1 +\eta_2)}{2},~k_2=s_+^2\eta_1,~k_4=\frac{s_+^2(\eta_{24}-\eta_1)}{2},
\end{equation}
and $s_+$ is a fixed constant defined in \eqref{eq: s+}. 
We restrict the Oseen–Frank constants to ensure compatibility with \eqref{relationship between Q and v parameters} and \eqref{eq: positive quadratic nabla Q}: 
\begin{equation}\label{assumption oseen frank}
k_1=k_3 \geqslant k_2> k_2+k_4 \geqslant C_6  >0 > k_4.
\end{equation}
In the remainder of this subsection, we fix the working domain to be $\Omega=\Omega_{2D}\times [0,h]$ for simplicity, where $\Omega_{2D}$ is a convex simply-connected bounded set with a $C^{1,1}$ smooth boundary. Since it is a convex cylindrical region (hence a domain with 2-codimensional edges \cite{dauge2006elliptic}), the regularity estimates in \eqref{eq: Wp estimation} remain valid \cite{dauge2006elliptic,grisvard2011elliptic}. 
We study the minimizers of the Oseen--Frank limiting energy $F(\n,\dr)$, where the director $\n$ is tangent on the top and bottom plates, $z=0$ and $z=h$, and free on the lateral surfaces. Thus, the admissible space for $\n$ is 
\begin{equation}
W_{\n}=\{\n \in W^{1,2}(\Omega): |\n|\equiv 1, n_3(x,y,z=0\text{~and~}h)=0 \text{ for } (x,y)\in \Omega_{2D} \}.
\end{equation}

\begin{proposition}\label{existence in vector model}
Let the elastic constants in \eqref{eq:energy-3d vector} satisfy the assumption \eqref{assumption oseen frank}. The Oseen--Frank limiting energy \eqref{eq:energy-3d vector} has a global minimizer $(\Tilde{\n},\Tilde{\dr})$ in the admissible space $W_{\n}\times W_\dr$. 
\end{proposition}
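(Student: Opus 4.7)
The plan is to follow the direct method of the calculus of variations, closely mirroring the strategy of Proposition \ref{existence in Q model}, while handling the nonconvex pointwise constraint $|\n|=1$ and the Oseen--Frank elastic energy. First, the admissible space $W_{\n}\times W_{\dr}$ is non-empty: the constant director $\n\equiv(1,0,0)$ satisfies $|\n|=1$ and $n_3=0$ on $z=0,h$, while a $W^{2,2}$-extension of $\dr_{bc}$ exists by standard trace theory. For coercivity in $\n$, I would exploit the assumption \eqref{assumption oseen frank} (which encodes the Ericksen-type inequalities) to bound the Frank elastic density pointwise below by a positive multiple of $|\nabla\n|^2$; the saddle-splay contribution $(k_2+k_4)(\tr(\nabla\n)^2-(\nabla\cdot\n)^2)$ is controlled because $k_2+k_4\geq C_6>0$, and the chiral cross term $2k_2\sigma\,\n\cdot(\nabla\times\n)$ appearing after expanding $|\n\cdot(\nabla\times\n)+\sigma|^2$ is absorbed by Young's inequality, using that $\|\n\|_{L^2}\leq |\Omega|^{1/2}$ by the unit-length constraint. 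Combined with the trivial $L^\infty$ bound, this gives a uniform $W^{1,2}$-bound on any minimizing sequence $\{\n_k\}$. Coercivity in $\dr$ follows exactly as in Proposition \ref{existence in Q model}: the quartic $f_{bs}$ with $f>0$ controls $\|\dr_k\|_{L^4}$, the layer term bounds $\|\Delta\dr_k+q^2\dr_k\|_{L^2}$, and \eqref{eq: Wp estimation}, valid on the convex cylindrical domain $\Omega=\Omega_{2D}\times[0,h]$ by the cited results on polyhedral domains with 2-codimensional edges, upgrades this to a uniform $W^{2,2}$-bound.

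Next, by reflexivity I extract subsequences $\n_k\rightharpoonup\tilde\n$ in $W^{1,2}(\Omega)$ and $\dr_k\rightharpoonup\tilde\dr$ in $W^{2,2}(\Omega)$. The Rellich--Kondrachov theorem gives $\n_k\to\tilde\n$ strongly in $L^p(\Omega)$ for every $p<6$ and almost everywhere on $\Omega$, so that $|\tilde\n|=1$ a.e.~and hence $\tilde\n\in W_{\n}$; similarly $\dr_k\to\tilde\dr$ in $W^{1,p}(\Omega)$ for $p<6$. Continuity of the trace operator on $W^{1,2}$ and $W^{2,2}$ preserves the Dirichlet-type conditions $n_3=0$ on $z=0,h$ and $\dr=\dr_{bc}$ on $\partial\Omega$ in the weak limit.

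What remains is weak lower semi-continuity of each term in $F(\n,\dr)$. The nematic and smectic bulk densities are continuous under strong $L^p$ convergence; the Frank quadratic terms are convex in $\nabla\n$ and therefore w.l.s.c.~in the weak $W^{1,2}$ topology, noting that the term $(\n\cdot(\nabla\times\n))^2$ is convex in $\nabla\n$ for each fixed $\n$ and the mixed integrand is handled by Ioffe-type results using strong $L^2$ convergence of $\n_k$; the layer term $\lambda_1(\Delta\dr+q^2\dr)^2$ is convex in $(\dr,\Delta\dr)$. The main obstacle I anticipate is the nematic--smectic coupling term $(\tr(D^2\dr\,(\n\otimes\n))+q^2\dr\cos^2\theta_0)^2$, which pairs the weakly convergent Hessian $D^2\dr_k$ with the nonlinear quantity $\n_k\otimes\n_k$. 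I would resolve this by observing that $\|\n_k\|_{L^\infty}\leq 1$ and $\n_k\to\tilde\n$ a.e., so dominated convergence gives $\n_k\otimes\n_k\to\tilde\n\otimes\tilde\n$ strongly in every $L^p(\Omega)$ with $p<\infty$; combined with $D^2\dr_k\rightharpoonup D^2\tilde\dr$ in $L^2$, this yields $\tr(D^2\dr_k\,(\n_k\otimes\n_k))\rightharpoonup \tr(D^2\tilde\dr\,(\tilde\n\otimes\tilde\n))$ weakly in $L^2$, after which w.l.s.c.~of the squared $L^2$ norm closes the argument. The direct method then produces a global minimizer $(\tilde\n,\tilde\dr)\in W_{\n}\times W_{\dr}$. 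The only genuinely nonstandard points are the preservation of $|\tilde\n|=1$ under weak $W^{1,2}$ convergence and the w.l.s.c.~of the nonlinearly coupled term; both reduce to careful use of Rellich--Kondrachov together with the pointwise bound inherited from the sphere constraint.
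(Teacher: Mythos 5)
Your proposal is correct and takes essentially the same approach as the paper: the direct method, with the key coercivity-in-$\n$ estimate obtained by rewriting the Frank density via $\textbf{tr}(\nabla \n)^2=|\nabla\n|^2-|\nabla\times\n|^2$, exploiting \eqref{assumption oseen frank} so that the remaining terms are nonnegative and absorbing the chiral cross term by Young's inequality, while coercivity in $\dr$ and weak lower semicontinuity are carried over from Proposition \ref{existence in Q model}. The additional details you spell out (preservation of $|\n|=1$ under a.e.\ convergence and the weak-times-strong convergence argument for the coupling term) only make explicit what the paper leaves implicit by reference.
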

\begin{proof}
From the proof of Proposition \ref{existence in Q model},  it is sufficient to verify the coercivity of $F(\n,\dr)$ with respect to $\n$. Note that $$\textbf{tr}(\nabla \n)^2= |\nabla \n|^2-|\nabla \times \n|^2=|\nabla \n|^2-|\n\cdot(\nabla \times \n)|^2-|\n\times(\nabla \times \n)|^2,$$ we have
\begin{equation*}
\begin{aligned}
F(\n,\dr)\geqslant & \int_\Omega \bigg\{k_1(\nabla \cdot \n)^2+k_2|\n \cdot (\nabla \times \n)+\sigma|^2 +k_3|\n \times (\nabla \times \n)|^2\\&+(k_2+k_4)(\textbf{tr}(\nabla \n)^2-(\nabla \cdot \n)^2) \bigg\}- \hat{C} \\
&=\int_\Omega \bigg\{(k_1-k_2-k_4)(\nabla \cdot \n)^2 + (k_3-k_2-k_4)|\n \times (\nabla \times \n)|^2\\
&-k_4(\n \cdot (\nabla \times \n))^2 + (k_2+k_4)|\nabla \n|^2 +2\sigma k_2 \n\cdot (\nabla \times \n)\bigg\} - \hat{C}\\
&\geqslant \int_\Omega \left\{ (k_2+k_4)|\nabla \n|^2 + 2\sigma k_2 \n \cdot (\nabla \times \n)\right\} -\hat{C}\\
&\geqslant \int_\Omega \left\{(k_2+k_4)|\nabla \n|^2-\epsilon (\n \cdot (\nabla \times \n))^2 \right\}- \hat{C}(\epsilon)\\
&\geqslant (C_6-2\epsilon)\Vert \nabla \n \Vert_{L^2(\Omega)}^2-\hat{C}(\epsilon),
\end{aligned}
\end{equation*}
where the second-to-last inequality follows from Young’s inequality, for sufficiently small and positive $\epsilon$. 
\end{proof}
In the Oseen--Frank model for cholesteric liquid crystals \cite{de1993physics}, the free energy of the helical configuration 
\begin{equation}\label{helical configuration}
\n_{\sigma}(\mathbf{x}) = [\cos(\sigma z), \sin(\sigma z), 0]^\top
\end{equation}
actually vanishes. In the remainder of this section, we  demonstrate that if the re-scaled Oseen--Frank constants in \eqref{eq:energy-3d vector} are sufficiently large, then the energy-minimizing vector field $\Tilde{\n}$ can be well approximated by the helical profile, \(\n_{\sigma}\) (although $\Tilde{\dr}$ may be non-zero).

\begin{lemma}\label{Lemma 4.2}
Assume that the Oseen-Frank elastic constants satisfy the relation \eqref{assumption oseen frank}. If $(\Tilde{\n},\Tilde{\dr})$ is a global minimizer of $F(\n,\dr)$ in $W_\n \times W_\dr$, then 
\begin{equation}\label{eq: boundedness of nabla n}
\Vert 
\nabla \Tilde{\n} \Vert^2_{L^2(\Omega)} 
 \leqslant \left(\frac{d^2}{4fC_6}-\frac{k_2 \sigma^2} {k_4}\right)|\Omega|.
\end{equation}
Moreover, if there exists a constant $C_7$ such that $k_2+k_4 \leqslant C_7$ and $k_2\geqslant 4C_7$, then
\begin{equation}
\Vert \nabla \times \Tilde{\n} +\sigma  \Tilde{\n}\Vert_{L^2(\Omega)}^2 \leqslant \frac{d^2|\Omega|}{2f k_2}+\frac{4C_7\sigma^2|\Omega|}{k_2},
\label{vanishment of the twist term}
\end{equation}
where $d,f$ are the re-scaled bulk constants in \eqref{eq:energy-3d vector}.
\end{lemma}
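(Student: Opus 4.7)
The plan is a competitor-and-coercivity argument. Since the helical field $\n_\sigma$ from \eqref{helical configuration} makes every Oseen--Frank elastic density in \eqref{eq:energy-3d vector} vanish and $\dr \equiv 0$ kills $f_{bs}$, $f_{layer}$, and $f_{angle}$, the pair $(\n_\sigma,0)$ is an admissible competitor with $F(\n_\sigma,0) = 0$. Minimality of $(\Tilde{\n},\Tilde{\dr})$ then immediately gives the crucial upper bound $F(\Tilde{\n},\Tilde{\dr}) \leq 0$, and the rest of the proof is devoted to deriving two matching coercive lower bounds.

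For \eqref{eq: boundedness of nabla n}, I would reuse the algebraic rearrangement already performed in the proof of Proposition~\ref{existence in vector model}. The identities $\textbf{tr}(\nabla\n)^2 = |\nabla\n|^2 - |\nabla\times\n|^2$ and $|\nabla\times\n|^2 = (\n\cdot\nabla\times\n)^2 + |\n\times\nabla\times\n|^2$ (both valid since $|\n|=1$), together with the hypotheses $k_1,k_3 \geq k_2+k_4$, allow one to discard the non-negative splay and bend contributions and lower-bound the Oseen--Frank part of $F$ by
\begin{equation*}
\int_\Omega \Big\{(k_2+k_4)|\nabla\n|^2 - k_4(\n\cdot\nabla\times\n)^2 + 2k_2\sigma\,\n\cdot\nabla\times\n + k_2\sigma^2\Big\}.
\end{equation*}
Since $-k_4 > 0$, completing the square in $y = \n\cdot\nabla\times\n$ gives the pointwise inequality $-k_4 y^2 + 2k_2\sigma y + k_2\sigma^2 \geq k_2\sigma^2(k_2+k_4)/k_4$, while the scalar minimization $f_{bs}(\dr) \geq -d^2/(4f)$ handles the bulk density. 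Dropping the non-negative layer and angle contributions and combining with $F(\Tilde{\n},\Tilde{\dr}) \leq 0$ produces
\begin{equation*}
(k_2+k_4)\,\|\nabla \Tilde{\n}\|_{L^2(\Omega)}^2 \leq \left(\frac{d^2}{4f} - \frac{k_2\sigma^2(k_2+k_4)}{k_4}\right)|\Omega|,
\end{equation*}
from which dividing by $k_2+k_4 > 0$ and then invoking $k_2+k_4 \geq C_6$ \emph{only} on the $d^2$ term recovers \eqref{eq: boundedness of nabla n}.

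For \eqref{vanishment of the twist term}, the key algebraic observation is $|\nabla\times\n+\sigma\n|^2 = (\n\cdot\nabla\times\n+\sigma)^2 + |\n\times\nabla\times\n|^2$, so that $k_3 \geq k_2$ allows lumping $k_2|\n\cdot\nabla\times\n+\sigma|^2 + k_3|\n\times\nabla\times\n|^2 \geq k_2|\nabla\times\n+\sigma\n|^2$. After dropping the remaining non-negative Oseen--Frank contributions, the only indefinite leftover is $(k_2+k_4)\int_\Omega \textbf{tr}(\nabla\n)^2$, which I would control via $|\textbf{tr}(\nabla\n)^2| \leq |\nabla\n|^2$ and the hypothesis $k_2+k_4 \leq C_7$ to obtain $(k_2+k_4)\int_\Omega \textbf{tr}(\nabla\n)^2 \geq -C_7\,\|\nabla\Tilde{\n}\|_{L^2(\Omega)}^2$. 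Substituting the already-established \eqref{eq: boundedness of nabla n} for $\|\nabla\Tilde{\n}\|_{L^2(\Omega)}^2$ and exploiting the final hypothesis $k_2 \geq 4C_7$ to absorb the cross contribution then closes the argument and yields \eqref{vanishment of the twist term}. The main obstacle is the sign bookkeeping: since $k_4 < 0$ and both $2k_2\sigma\,\n\cdot\nabla\times\n$ and $\textbf{tr}(\nabla\n)^2 - (\nabla\cdot\n)^2$ are indefinite, the completing-the-square step must be carried out in the direction dictated by $-k_4 > 0$, and the bound $k_2+k_4 \geq C_6$ must be applied only \emph{after} dividing by $k_2+k_4$ so that the sharp constant $-k_2\sigma^2/k_4$ survives in \eqref{eq: boundedness of nabla n}.
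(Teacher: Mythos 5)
Your competitor-and-coercivity strategy and your proof of \eqref{eq: boundedness of nabla n} are correct and essentially identical to the paper's argument: the paper also compares with $(\n_\sigma,0)$, bounds the smectic bulk term by $-d^2/(4f)$, discards the splay/bend contributions using \eqref{assumption oseen frank}, and handles the chiral cross term by Young's inequality, which is the same computation as your completion of the square in $y=\n\cdot(\nabla\times\n)$; applying $k_2+k_4\geqslant C_6$ only after dividing by $k_2+k_4$ is also exactly what the paper does.

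The second half, however, does not establish the stated inequality \eqref{vanishment of the twist term}. Your chain gives $k_2\Vert\nabla\times\Tilde{\n}+\sigma\Tilde{\n}\Vert_{L^2(\Omega)}^2 \leqslant \frac{d^2}{4f}|\Omega| + C_7\Vert\nabla\Tilde{\n}\Vert_{L^2(\Omega)}^2$, and substituting the divided bound \eqref{eq: boundedness of nabla n} yields
\begin{equation*}
\Vert\nabla\times\Tilde{\n}+\sigma\Tilde{\n}\Vert_{L^2(\Omega)}^2 \leqslant \frac{d^2|\Omega|}{4fk_2}\left(1+\frac{C_7}{C_6}\right) + \frac{C_7\sigma^2|\Omega|}{|k_4|}.
\end{equation*}
The $\sigma^2$ term is fine, since $|k_4|=k_2-(k_2+k_4)\geqslant k_2-C_7\geqslant \tfrac{3}{4}k_2$, but the $d^2$ coefficient $\frac{1}{4fk_2}(1+C_7/C_6)$ exceeds the claimed $\frac{1}{2fk_2}$ whenever $C_7>C_6$; and since $C_6\leqslant k_2+k_4\leqslant C_7$, one always has $C_7\geqslant C_6$, so your constant matches only in the degenerate case $C_6=C_7$, and the surplus $\frac{d^2(C_7-C_6)}{4fC_6 k_2}|\Omega|$ cannot in general be absorbed by the slack in the $\sigma^2$ term (take $|d|$ large or $\sigma$ small). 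The hypothesis $k_2\geqslant 4C_7$, as you use it, only improves the $\sigma^2$ part and does not repair the $d^2$ coefficient. The fix is small: instead of the divided form \eqref{eq: boundedness of nabla n}, substitute the undivided estimate already produced in your first part, $(k_2+k_4)\Vert\nabla\Tilde{\n}\Vert_{L^2(\Omega)}^2\leqslant \frac{d^2}{4f}|\Omega|+\frac{k_2(k_2+k_4)}{|k_4|}\sigma^2|\Omega|$, together with $(k_2+k_4)|\textbf{tr}(\nabla\Tilde{\n})^2|\leqslant (k_2+k_4)|\nabla\Tilde{\n}|^2$ (do not replace $k_2+k_4$ by $C_7$ at this stage); this gives $\Vert\nabla\times\Tilde{\n}+\sigma\Tilde{\n}\Vert_{L^2(\Omega)}^2\leqslant \frac{d^2|\Omega|}{2fk_2}+\frac{(k_2+k_4)\sigma^2|\Omega|}{|k_4|}\leqslant \frac{d^2|\Omega|}{2fk_2}+\frac{4C_7\sigma^2|\Omega|}{3k_2}$, which implies \eqref{vanishment of the twist term}. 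For comparison, the paper argues pointwise: it drops $(k_2+k_4)|\nabla\Tilde{\n}|^2$, splits $k_2=\frac{k_2}{2}+\frac{k_2}{2}$, uses $\frac{k_2}{2}\geqslant 2C_7$ and $k_2+k_4\leqslant C_7$, and the elementary inequality $2|\nabla\times\Tilde{\n}+\sigma\Tilde{\n}|^2-|\nabla\times\Tilde{\n}|^2\geqslant -2\sigma^2$, arriving directly at $\frac{d^2}{4f}|\Omega|\geqslant \frac{k_2}{2}\Vert\nabla\times\Tilde{\n}+\sigma\Tilde{\n}\Vert_{L^2(\Omega)}^2-2C_7\sigma^2|\Omega|$.
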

\begin{proof}
Since $(\Tilde{\n},\Tilde{\dr})$ is a global minimizer of $F(\n,\dr)$ in $W_\n \times W_\dr$, we have $F(\n_\sigma,\dr\equiv 0)\geqslant F(\Tilde{\n},\Tilde{\dr})$, i.e., 
\begin{equation*}
\begin{aligned}
F(\n_\sigma,\dr\equiv 0)&=0\\
&\geqslant \int_\Omega \bigg\{k_1(\nabla \cdot \Tilde{\n})^2+k_2|\Tilde{\n} \cdot (\nabla \times \Tilde{\n})+\sigma|^2 +k_3|\Tilde{\n} \times (\nabla \times \Tilde{\n})|^2\\
&+(k_2+k_4)(\textbf{tr}(\nabla \Tilde{\n})^2-(\nabla \cdot \Tilde{\n})^2)
+\frac{d}{2}\Tilde{\dr}^2+\frac{f}{4}\Tilde{\dr}^4\\
&+ \lambda_1\left(\Delta \Tilde{\dr} + q^2\Tilde{\dr}\right)^2
 + \lambda_2 \left(\mathrm{tr}\left(\mathcal{D}^2\Tilde{\dr} \left(\Tilde{\n} \otimes \Tilde{\n} \right)\right) + q^2\Tilde{\dr}\cos^2\theta_0\right)^2 \bigg\} \\
 & \geqslant \int_\Omega k_1(\nabla \cdot \Tilde{\n})^2+k_2|\Tilde{\n} \cdot (\nabla \times \Tilde{\n})+\sigma|^2 +k_3|\Tilde{\n} \times (\nabla \times \Tilde{\n})|^2\\
&+(k_2+k_4)(\textbf{tr}(\nabla \Tilde{\n})^2-(\nabla \cdot \Tilde{\n})^2)
-\frac{d^2}{4f}|\Omega|,
\end{aligned}
\end{equation*}
and note that $|\Tilde{\n} \cdot (\nabla \times \Tilde{\n})+\sigma|^2=|\nabla \times \Tilde{\n} + \sigma \Tilde{\n}|^2-|\Tilde{\n} \times (\nabla \times \Tilde{\n})|^2$, $\textbf{tr}(\nabla \Tilde{\n})^2=|\nabla \Tilde{\n}|^2-|\nabla \times \Tilde{\n}|^2$. Then,
\begin{equation}\label{inequality for bounding nabla n and twist n}
\begin{aligned}
\frac{d^2}{4f}|\Omega|&\geqslant \int_\Omega \bigg\{(k_1-k_2-k_4)(\nabla \cdot \Tilde{\n})^2 + (k_3-k_2)|\Tilde{\n} \times (\nabla \times \Tilde{\n})|^2 \\
&\qquad+k_2|\nabla \times \Tilde{\n} + \sigma \Tilde{\n}|^2-(k_2+k_4)|\nabla \times \Tilde{\n}|^2+(k_2+k_4) |\nabla \Tilde{\n}|^2 \bigg\}\\
&\geqslant \int_\Omega \left\{k_2|\nabla \times \Tilde{\n}+\sigma \Tilde{\n}|^2+(k_2+k_4)|\nabla \Tilde{\n}|^2-(k_2+k_4)|\nabla \times \Tilde{\n}|^2 \right\}\\
&\geqslant  \int_\Omega \left\{(k_2+k_4)|\nabla \times \Tilde{\n}|^2+\frac{k_2(k_2+k_4)}{k_4}\sigma^2+(k_2+k_4)|\nabla \Tilde{\n}|^2-(k_2+k_4)|\nabla \times \Tilde{\n}|^2 \right\},
\end{aligned}
\end{equation}
where the last inequality follows from Young’s inequality. Therefore,
\begin{equation}\label{boundness of the nabla n}
\left(\frac{d^2}{4fC_6}-\frac{k_2 \sigma^2} {k_4}\right)|\Omega|\geqslant \left(\frac{d^2}{4f(k_2+k_4)}-\frac{k_2 \sigma^2} {k_4}\right)|\Omega|\geqslant \Vert 
\nabla \Tilde{\n} \Vert^2_{L^2(\Omega)}.
\end{equation}
Moreover, if $k_2+k_4 \leqslant C_7$ and $k_2\geqslant 4C_7$, then
\begin{equation*}
\begin{aligned}
&\frac{d^2}{4f}|\Omega|\geqslant  \int_\Omega \left\{k_2|\nabla \times \Tilde{\n}+\sigma \Tilde{\n}|^2+(k_2+k_4)|\nabla \Tilde{\n}|^2-(k_2+k_4)|\nabla \times \Tilde{\n}|^2 \right\}\\
&\geqslant  \int_\Omega \left\{ \frac{k_2}{2}|\nabla \times \Tilde{\n}+\sigma \Tilde{\n}|^2+\frac{k_2}{2}|\nabla \times \Tilde{\n}+\sigma \Tilde{\n}|^2-(k_2+k_4)|\nabla \times \Tilde{\n}|^2 \right\}\\
&\geqslant \int_\Omega \left\{ \frac{k_2}{2}|\nabla \times \Tilde{\n}+\sigma \Tilde{\n}|^2+2C_7|\nabla \times \Tilde{\n}+\sigma \Tilde{\n}|^2-C_7|\nabla \times \Tilde{\n}|^2 \right\}\\
&\geqslant \int_\Omega \left\{ \frac{k_2}{2}|\nabla \times \Tilde{\n}+\sigma \Tilde{\n}|^2-2C_7\sigma^2 \right\},
\end{aligned}
\end{equation*}
which leads to \eqref{vanishment of the twist term}.
\end{proof}

Subsequently, from \eqref{vanishment of the twist term} and the assumptions of Lemma \ref{Lemma 4.2}, the twist energy density $\Vert \nabla \times \Tilde{\n} +\sigma  \Tilde{\n}\Vert_{L^2(\Omega)}^2$ of the minimizers tends to zero as \( k_2 \rightarrow \infty \) (recall that all parameters, including $k_2$, are nondimensional due to the scaling in \eqref{non-dimensionalized}), and thus their \( W^{1,2} \) norms are uniformly bounded. Consequently, one obtains a limiting helical configuration as $k_2\to \infty$. 

\begin{corollary}\label{limit to helical configuration as k2 to inf}
Let \(\{(\mathbf{n}^*_i, \dr^*_i)\}\) be a sequence of minimizers for \(F(\n,\dr)\) with rescaled Oseen-Frank constants \((k_j^i,j=1,2,3,4\) satisfying the relation \eqref{assumption oseen frank} and assumption in Lemma \ref{Lemma 4.2}, such that \(\lim_{i \to \infty} k_2^i = \infty\). There exists a subsequence \(\{(\mathbf{n}^*_{i_k},\dr^*_{i_k})\}\) and function \(\mathbf{n}^*_\infty \in W_\n \) such that \(\mathbf{n}^*_{i_k} \rightharpoonup \mathbf{n}^*_\infty\) in \(W^{1,2}(\Omega)\), as \(i_k \to \infty\), where \(\mathbf{n}^*_\infty\) satisfies
\[
\nabla \times \mathbf{n}^*_\infty + \sigma \mathbf{n}^*_\infty = 0 \quad \text{in~} \Omega.
\]
\end{corollary}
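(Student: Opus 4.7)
The plan is to combine the two uniform a priori estimates from Lemma \ref{Lemma 4.2}: the bound \eqref{eq: boundedness of nabla n} will give a uniform $W^{1,2}(\Omega)$ bound on $\{\mathbf{n}^*_i\}$, allowing extraction of a weakly convergent subsequence, and the bound \eqref{vanishment of the twist term} will force $\nabla\times\mathbf{n}+\sigma\mathbf{n}=0$ to hold in the limit.

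\emph{Uniform bound and extraction.} First I would verify that the hypotheses on the rescaled Oseen--Frank constants make \eqref{eq: boundedness of nabla n} uniform in $i$. Since $k_2^i + k_4^i \leqslant C_7$ and $k_2^i \geqslant 4C_7$, one has $k_4^i < 0$ with $|k_4^i| \geqslant 3k_2^i/4$, hence $-k_2^i/k_4^i \leqslant 4/3$ independently of $i$, and
\[
\|\nabla \mathbf{n}^*_i\|_{L^2(\Omega)}^2 \leqslant \left(\frac{d^2}{4fC_6}+\frac{4}{3}\sigma^2\right)|\Omega|.
\]
Combined with $|\mathbf{n}^*_i|\equiv 1$ a.e., this yields a uniform $W^{1,2}(\Omega)$ bound. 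By reflexivity there is a subsequence $\mathbf{n}^*_{i_k}\rightharpoonup \mathbf{n}^*_\infty$ in $W^{1,2}(\Omega)$; the Rellich--Kondrachov theorem upgrades this to strong $L^2(\Omega)$ convergence, and after passing to a further subsequence, to a.e.~pointwise convergence, so $|\mathbf{n}^*_\infty|=1$ a.e. Continuity of the trace under weak $W^{1,2}$ convergence preserves $n_3=0$ on $z=0,h$, so $\mathbf{n}^*_\infty\in W_\n$.

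\emph{Passing to the limit in the twist.} The estimate \eqref{vanishment of the twist term} gives
\[
\|\nabla\times\mathbf{n}^*_{i_k}+\sigma\mathbf{n}^*_{i_k}\|_{L^2(\Omega)}^2 \leqslant \frac{d^2|\Omega|}{2f k_2^{i_k}} + \frac{4C_7\sigma^2|\Omega|}{k_2^{i_k}},
\]
whose right-hand side vanishes since $k_2^{i_k}\to\infty$. Hence $\nabla\times\mathbf{n}^*_{i_k}+\sigma\mathbf{n}^*_{i_k}\to 0$ strongly in $L^2(\Omega,\mathbb{R}^3)$. On the other hand, weak $W^{1,2}$ convergence of $\mathbf{n}^*_{i_k}$ and boundedness of the linear operator $\nabla\times\colon W^{1,2}\to L^2$ yield $\nabla\times\mathbf{n}^*_{i_k}\rightharpoonup \nabla\times\mathbf{n}^*_\infty$ in $L^2$, while $\sigma\mathbf{n}^*_{i_k}\to\sigma\mathbf{n}^*_\infty$ strongly in $L^2$. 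Uniqueness of weak limits then forces $\nabla\times\mathbf{n}^*_\infty+\sigma\mathbf{n}^*_\infty=0$ in $\Omega$, which is the claim.

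\emph{Main obstacle.} The analytical content is already contained in Lemma \ref{Lemma 4.2}, so the corollary is essentially a compactness argument. The only subtle point is the algebraic check that the coupled scaling $k_2^i+k_4^i\leqslant C_7$, $k_2^i\geqslant 4C_7$ keeps the ratio $k_2^i/|k_4^i|$ bounded, so that the bound on $\|\nabla\mathbf{n}^*_i\|_{L^2}$ does not blow up as $k_2^i\to\infty$; once that is in hand, the unit-norm constraint and the tangential boundary condition survive the weak-convergence step in the standard way, and no further regularity argument is required.
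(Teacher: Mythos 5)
Your proposal is correct and follows essentially the same route as the paper, which establishes this corollary through the argument embedded in the proof of Proposition \ref{helical configuration limit}: the bounded ratio $k_2^i/|k_4^i|$ makes \eqref{eq: boundedness of nabla n} uniform in $i$, giving weak $W^{1,2}$ compactness, and \eqref{vanishment of the twist term} with $k_2^i\to\infty$ forces the twist term to vanish in the limit. Your explicit bound $-k_2^i/k_4^i\leqslant 4/3$ and the weak-limit-uniqueness step for the curl equation are just slightly more detailed versions of what the paper does (the paper uses $|k_2^i/k_4^i|\leqslant 2$ for large $i$), so there is no substantive difference.
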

The following Lemma, which has been proven in \cite{ericksen1967general}, demonstrates that $\mathbf{n}^*_\infty$ is the helical configuration (up to a rotation), and this conclusion holds for all $d$.
\begin{lemma}\label{configuration satisfies pde for helical configuration}
Consider \(\mathbf{n} \in W_\n\) such that
\begin{equation}\label{pde for helical configuration}
\nabla \times \mathbf{n} + \sigma \mathbf{n} = 0 \quad \text{in~} \ \Omega.
\end{equation}
Then
\begin{equation}\label{Rotation matrix}
\mathbf{n}(\mathbf{x}) = \mathbf{R} \mathbf{n}_{\sigma}(\mathbf{R}^T \mathbf{x}), \text{~with~} \mathbf{R}=\begin{bmatrix}
\cos \theta & -\sin \theta & 0 \\
\sin \theta & \cos \theta & 0 \\
0 & 0 & 1
\end{bmatrix}.
\end{equation}
\end{lemma}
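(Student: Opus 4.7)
The plan is to convert the first-order PDE into standard elliptic equations, exploit the unit-norm constraint to obtain rigidity, and finally integrate the resulting system to identify the helical profile up to a rotation around the $z$-axis.

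First, I would take the divergence and the curl of $\nabla\times\mathbf{n}+\sigma\mathbf{n}=0$. Since $\nabla\cdot(\nabla\times\mathbf{n})\equiv 0$, the divergence yields $\nabla\cdot\mathbf{n}=0$; applying the curl together with $\nabla\times(\nabla\times\mathbf{n})=\nabla(\nabla\cdot\mathbf{n})-\Delta\mathbf{n}$ gives the vector Helmholtz equation $\Delta\mathbf{n}+\sigma^2\mathbf{n}=0$. Componentwise, each $n_i$ solves $\Delta n_i+\sigma^2 n_i=0$, and $n_3$ inherits the homogeneous Dirichlet trace $n_3=0$ on $\{z=0,h\}$ from the tangential condition built into $W_{\mathbf{n}}$.

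Next, I would use $|\mathbf{n}|^2=1$ through the identity $\tfrac{1}{2}\nabla|\mathbf{n}|^2=(\mathbf{n}\cdot\nabla)\mathbf{n}+\mathbf{n}\times(\nabla\times\mathbf{n})$. Substituting $\nabla\times\mathbf{n}=-\sigma\mathbf{n}$ kills the second term and leaves $(\mathbf{n}\cdot\nabla)\mathbf{n}=0$. Geometrically, this means that the integral curves of $\mathbf{n}$ are straight lines along which $\mathbf{n}$, and in particular each component $n_i$, stays constant: every component of $\mathbf{n}$ is a first integral of the flow generated by $\mathbf{n}$. Differentiating $|\mathbf{n}|^2=1$ twice and contracting with the Helmholtz equation also yields $|\nabla\mathbf{n}|^2=\sigma^2$ pointwise.

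The heart of the proof is then to show $n_3\equiv 0$. Suppose, for contradiction, that $n_3(\mathbf{x}_0)\neq 0$ at some interior $\mathbf{x}_0$. Because $\mathbf{n}$ is constant along the integral line $\mathbf{x}_0+t\,\mathbf{n}(\mathbf{x}_0)$, the function $n_3$ keeps the same nonzero value along the entire line segment inside $\Omega$. Combining this first-integral structure with the scalar Helmholtz identity $\int_\Omega n_3(\Delta n_3+\sigma^2 n_3)=0$, the Dirichlet trace on the horizontal plates, and $\nabla\cdot\mathbf{n}=0$ (so that the corresponding flux through $z=0,h$ vanishes), one rules out the existence of such an $\mathbf{x}_0$. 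After a rotation $\mathbf{R}$ about the $z$-axis—which is precisely the freedom encoded in the parametrization $\mathbf{R}\mathbf{n}_{\sigma}(\mathbf{R}^{T}\mathbf{x})$—one may identify the distinguished horizontal phase.

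With $n_3\equiv 0$ and $|\mathbf{n}|=1$, I would write $\mathbf{n}=(\cos\phi,\sin\phi,0)$ with $\phi=\phi(\mathbf{x})$. Substituting into the three scalar components of $\nabla\times\mathbf{n}+\sigma\mathbf{n}=0$ gives $\cos\phi(\sigma-\partial_z\phi)=0$ and $\sin\phi(\sigma-\partial_z\phi)=0$, so $\partial_z\phi\equiv\sigma$; combining the third component with $\nabla\cdot\mathbf{n}=0$ gives an invertible $2\times 2$ linear system for $(\partial_x\phi,\partial_y\phi)$ forcing $\partial_x\phi=\partial_y\phi=0$. Hence $\phi(\mathbf{x})=\sigma z+\theta_0$ for a constant $\theta_0$, and $\mathbf{n}(\mathbf{x})=(\cos(\sigma z+\theta_0),\sin(\sigma z+\theta_0),0)=\mathbf{R}\mathbf{n}_{\sigma}(\mathbf{R}^{T}\mathbf{x})$ with $\mathbf{R}$ the rotation of angle $\theta_0$ in the $xy$-plane.

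The main obstacle is the rigidity step $n_3\equiv 0$: only partial boundary data are available (Dirichlet on the top/bottom plates, free on the lateral surface), so a naive integration-by-parts identity for the Helmholtz equation does not close by itself. The argument must lean on the additional Beltrami structure of $\mathbf{n}$—unit norm, straight-line integral curves, and divergence-free character—to compensate for the absent lateral data.
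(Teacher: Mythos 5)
Your reduction steps are fine: taking the divergence gives $\nabla\cdot\mathbf{n}=0$ (using $\sigma\neq 0$), taking the curl gives $\Delta\mathbf{n}+\sigma^2\mathbf{n}=0$, the unit-norm identity gives $(\mathbf{n}\cdot\nabla)\mathbf{n}=0$, and the endgame is correct: once $n_3\equiv 0$ is known, writing $\mathbf{n}=(\cos\phi,\sin\phi,0)$ and combining the three components of the Beltrami equation with $\nabla\cdot\mathbf{n}=0$ does force $\partial_z\phi=\sigma$, $\partial_x\phi=\partial_y\phi=0$ (simple connectedness giving a global lifting), hence $\phi=\sigma z+\theta_0$, which is exactly $\mathbf{R}\mathbf{n}_{\sigma}(\mathbf{R}^{T}\mathbf{x})$. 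The problem is that the entire weight of the lemma rests on the step $n_3\equiv 0$, and that step is not proven: you yourself concede that the integration-by-parts identity "does not close by itself." Indeed $\int_\Omega n_3(\Delta n_3+\sigma^2 n_3)=0$ is vacuous (the integrand vanishes identically), and after integrating by parts the lateral boundary term $\int n_3\,\partial_\nu n_3$ is uncontrolled because no condition is imposed on the lateral surface; there is also no Poincar\'e-type constant available to beat $\sigma^2$ for general $h$. The characteristic-line argument does not close either: the straight line through a point with $n_3(\mathbf{x}_0)\neq 0$ does move monotonically in $z$, but it may exit $\Omega=\Omega_{2D}\times[0,h]$ through the free lateral boundary before reaching the plates $z=0,h$, so the tangency condition there is never engaged. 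As written, the central rigidity claim is an assertion, not a proof.

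For comparison, the paper does not prove this lemma internally at all: it invokes the classical result of Ericksen \cite{ericksen1967general}, which classifies \emph{all} unit-vector solutions of $\nabla\times\mathbf{n}+\sigma\mathbf{n}=0$ as helical fields $\mathbf{n}=\cos(\sigma\,\mathbf{e}\cdot\mathbf{x}+\phi_0)\,\mathbf{u}+\sin(\sigma\,\mathbf{e}\cdot\mathbf{x}+\phi_0)\,\mathbf{v}$ with an arbitrary orthonormal frame $(\mathbf{u},\mathbf{v},\mathbf{e})$; only \emph{after} this classification does the boundary condition $n_3=0$ on the two plates (built into $W_\n$) force $u_3=v_3=0$, hence $\mathbf{e}=\pm\hat{\mathbf z}$ and the stated form with a rotation about the $z$-axis. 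That is the natural order of argument: prove the global structure first from the Beltrami identities (your identities $(\mathbf{n}\cdot\nabla)\mathbf{n}=0$, $\nabla\cdot\mathbf{n}=0$, $|\nabla\mathbf{n}|^2=\sigma^2$ are the right starting point, but one must push them to the full rigidity statement), and only then use the partial boundary data to fix the axis. Trying instead to extract $n_3\equiv 0$ directly from the plates, as you propose, is precisely where the missing idea lies, so the proposal as it stands has a genuine gap at its key step.
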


\begin{proposition}\label{helical configuration limit}
Let \(\{(\mathbf{n}^*_i, \dr^*_i)\}\) be a sequence of minimizers for \(F_i(\n,\dr)\), where \(F_i(\n,\dr)\) is the total energy $F$ with Frank constants \((k_j^i,j=1,2,3,4)\) satisfying the relation \eqref{assumption oseen frank} and assumption in Lemma \ref{Lemma 4.2}, and such that \(\lim_{i \to \infty} k_2^i = \infty\). There exists a limit $(\n_\infty^*, \dr_\infty^*)$ and a subsequence $(\n_{i_k}^*, \dr_{i_k}^*)$ such that
\begin{equation}
\n_{i_k}^*\rightarrow \n_\infty^* \ in \ L^4(\Omega, \mathbb{S}^2), ~ \dr_{i_k}^* \rightharpoonup \dr_\infty^* \ in \ H^2(\Omega).
\end{equation}
Moreover, $\n_\infty^*$ is the helical configuration up to a rotation as defined in \eqref{Rotation matrix}, and $\lim_{k\rightarrow \infty}F_{i_k}(\n^*_{i_k},\dr^*_{i_k})=F_\infty(\n^*_\infty,\dr^*_\infty),$
where
\begin{equation}\label{eq: F_infty}
\begin{aligned}
F_\infty(\n,\dr)=& \int_\Omega \bigg\{ \frac{d}{2}\delta\rho^2+\frac{f}{4}\delta\rho^4\\
&+ \lambda_1\left(\Delta\delta\rho + q^2\delta\rho\right)^2
 + \lambda_2 \left(\mathrm{tr}\left({D}^2\delta\rho \left(\n \otimes \n\right)\right) + q^2\delta\rho\cos^2\theta_0\right)^2\bigg\}.
\end{aligned}
\end{equation}

\end{proposition}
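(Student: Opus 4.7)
The plan is to follow the template of Proposition \ref{existence in Q model} and Proposition \ref{proposition: strong convergence}: secure uniform bounds to extract a weakly convergent subsequence, identify its limit via the helical characterization from Lemmas \ref{Lemma 4.2}--\ref{configuration satisfies pde for helical configuration}, and then match upper and lower bounds on the energies. The basic comparison throughout is $F_i(\n_i^*,\dr_i^*) \leqslant F_i(\n_\sigma,0) = 0$, which holds because the helical trial pair $(\n_\sigma,0)$ annihilates every term in the functional.

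First I extract the subsequence. Lemma \ref{Lemma 4.2} already gives $\|\nabla\n_i^*\|_{L^2(\Omega)}\leqslant C$, and combined with the unit-modulus constraint this yields a uniform $W^{1,2}$-bound. For $\dr_i^*$, non-negativity of $f_{el},\,f_{layer},\,f_{angle}$ forces $\int_\Omega f_{bs}(\dr_i^*)\leqslant 0$; since $f>0$, a H\"older argument bounds $\|\dr_i^*\|_{L^4(\Omega)}$ and hence $\|\dr_i^*\|_{L^2(\Omega)}$, the layer term then bounds $\|\Delta\dr_i^*+q^2\dr_i^*\|_{L^2(\Omega)}$, and \eqref{eq: Wp estimation} with $p=2$ upgrades this to a uniform $W^{2,2}$-bound. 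Passing to subsequences yields $\n_{i_k}^*\rightharpoonup\n_\infty^*$ in $W^{1,2}(\Omega)$, $\n_{i_k}^*\to\n_\infty^*$ in $L^4(\Omega)$ by Rellich--Kondrachov (with $|\n_\infty^*|=1$ a.e.~after a further a.e.~subsequence), and $\dr_{i_k}^*\rightharpoonup\dr_\infty^*$ in $H^2(\Omega)$. Corollary \ref{limit to helical configuration as k2 to inf} combined with Lemma \ref{configuration satisfies pde for helical configuration} then identifies $\n_\infty^*$ as a helical profile up to the rotation \eqref{Rotation matrix}.

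For the upper bound I take $(\n_\infty^*,\dr_\infty^*)$ as a competitor. All Oseen--Frank contributions to $F_{i_k}(\n_\infty^*,\dr_\infty^*)$ vanish because $\n_\infty^*$ is helical, so $F_{i_k}(\n_\infty^*,\dr_\infty^*)=F_\infty(\n_\infty^*,\dr_\infty^*)$ for every $k$ and the minimality of $(\n_{i_k}^*,\dr_{i_k}^*)$ yields $\limsup_k F_{i_k}(\n_{i_k}^*,\dr_{i_k}^*)\leqslant F_\infty(\n_\infty^*,\dr_\infty^*)$. For the lower bound I drop the non-negative Frank part to get $F_{i_k}\geqslant F_\infty$ pointwise, whence
\begin{equation*}
\liminf_{k\to\infty} F_{i_k}(\n_{i_k}^*,\dr_{i_k}^*) \geqslant \liminf_{k\to\infty} F_\infty(\n_{i_k}^*,\dr_{i_k}^*) \geqslant F_\infty(\n_\infty^*,\dr_\infty^*),
\end{equation*}
provided $F_\infty$ is w.l.s.c.\ along the selected subsequence.

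I expect the w.l.s.c.\ of $F_\infty$ to be the main obstacle, specifically the coupling term $\lambda_2\int_\Omega(\mathrm{tr}(D^2\dr\,\n\otimes\n)+q^2\dr\cos^2\theta_0)^2$, which mixes the merely weakly convergent $D^2\dr_{i_k}^*$ with the product $\n_{i_k}^*\otimes\n_{i_k}^*$. The strategy is to set $g_k:=(\n_{i_k}^*)^\top D^2\dr_{i_k}^*\,\n_{i_k}^*+q^2\dr_{i_k}^*\cos^2\theta_0$ and show $g_k\rightharpoonup g_\infty$ in $L^2(\Omega)$: for any test $\varphi\in L^2(\Omega)$, dominated convergence (using $|\n_{i_k}^*|=1$ and the a.e.~limit) gives $\varphi\,\n_{i_k}^*\otimes\n_{i_k}^*\to\varphi\,\n_\infty^*\otimes\n_\infty^*$ strongly in $L^2(\Omega)$, so pairing against the $L^2$-weak limit $D^2\dr_{i_k}^*\rightharpoonup D^2\dr_\infty^*$ produces convergence of the $\varphi$-integral. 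Convexity of $x\mapsto x^2$ then yields w.l.s.c.\ of the integrand, while the bulk and layer terms are handled exactly as in Lemma \ref{lemma: wsls}. Sandwiching the two inequalities delivers the stated energy convergence.
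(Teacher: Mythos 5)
Your skeleton is essentially the paper's: compare with the helical competitor to get $F_i(\n_i^*,\dr_i^*)\leqslant F_i(\n_\sigma,0)=0$, use Lemma \ref{Lemma 4.2} for a uniform gradient bound (one small gloss: the right-hand side of \eqref{eq: boundedness of nabla n} contains $-k_2^i\sigma^2/k_4^i$, so you must first note, as the paper does, that $k_2^i+k_4^i\leqslant C_7$ and $k_2^i\to\infty$ force $|k_2^i/k_4^i|\to 1$ before the bound is uniform in $i$), extract a subsequence, identify the limit as a helix via \eqref{vanishment of the twist term} and Lemma \ref{configuration satisfies pde for helical configuration}, and then sandwich the energies using minimality and the vanishing of the Frank energy at the helix. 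Your handling of the compactness for $\dr_i^*$ and of the weak lower semicontinuity of the coupling term (strong $L^2$ convergence of $\varphi\,\n_{i_k}^*\otimes\n_{i_k}^*$ paired against $D^2\dr_{i_k}^*\rightharpoonup D^2\dr_\infty^*$, then convexity) is more explicit than the paper, which leaves both points implicit.

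The gap is your repeated appeal to ``non-negativity of $f_{el}$'', i.e.\ of the chiral Oseen--Frank part. Under \eqref{assumption oseen frank} this density is \emph{not} pointwise non-negative: $k_4<0$ and the chirality term make it indefinite (double-twist fields have locally negative density — the blue-phase mechanism), and indeed the paper's own lower bound \eqref{inequality for bounding nabla n and twist n} for the Frank integral carries the negative term $\frac{k_2(k_2+k_4)}{k_4}\sigma^2$ rather than zero. For the $\dr_i^*$ bound this is harmless: you only need the Frank part bounded below uniformly in $i$ (e.g.\ $\geqslant -2C_7\sigma^2|\Omega|$, from the computation in Lemma \ref{Lemma 4.2} once $|k_2^i/k_4^i|\leqslant 2$), which still yields $\int_\Omega f_{bs}(\dr_i^*)\leqslant C$ and hence the $W^{2,2}$ bound. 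But in your lower bound, ``drop the non-negative Frank part to get $F_{i_k}\geqslant F_\infty$'' is not available as stated: pointwise it is false, and as a functional inequality on $W_\n$ it amounts to the unproven assertion that the chiral Frank energy with saddle-splay and these mixed boundary conditions is non-negative. What you actually need is $\liminf_k\mathrm{Frank}_{i_k}(\n_{i_k}^*)\geqslant 0$ along your subsequence; since the splay, twist and bend squares have non-negative coefficients, this reduces to showing that the saddle-splay (null-Lagrangian) contribution $(k_2^{i_k}+k_4^{i_k})\int_\Omega\big(\mathrm{tr}(\nabla\n_{i_k}^*)^2-(\nabla\cdot\n_{i_k}^*)^2\big)$ tends to its value at the helix, namely zero — a step that requires an argument and is not supplied (the paper is admittedly terse here too, invoking only a ``w.l.s.c.\ feature'' in \eqref{eq: convergence of oseen frank elastic limit}, but it does not rest on the false pointwise positivity you assert). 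As written, your justification of the liminf inequality does not stand.
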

\begin{proof}
The proof is similar to that of \cite[Lemma 4]{bauman2002phase} and \cite[Theorem 3.1]{joo_phase_2006}. By Proposition \ref{limit to helical configuration as k2 to inf}, the weak limit of the minimizers of $F(\n, \dr)$ satisfies \eqref{helical configuration} and is a helical configuration as described in Lemma \ref{configuration satisfies pde for helical configuration}. Specifically, for any helical configuration with a uniform density, we have $$F_i(\n_{\sigma},\dr\equiv 0)=0,$$ which serves as an upper bound for the energy of the minimizers. Thus, 
\begin{equation}
F_{i}(\n_{i}^*, \dr_{i}^*)\leqslant F_{i}(\n_{\sigma},\dr\equiv 0)=0.
\label{boundness of energy when k limits to infinity}
\end{equation}
From the condition $\lim_{i\rightarrow\infty}k_2^i= \infty$, $k_4<0$, $k_2+k_4 \leqslant C_7$, we have that $\lim_{i\rightarrow\infty}\left|k_2^i/k_4^i\right|=1$, and hence, we can assume that $|k_2^i/k_4^i|\leqslant 2, i=1,2,\cdots$ without the loss of generality. Then, from \eqref{eq: boundedness of nabla n}, we have that
\begin{equation}
\left(\frac{d^2}{4fC_6}+2 \sigma^2\right)|\Omega|\geqslant \left(\frac{d^2}{4fC_6}-\frac{k_2^i \sigma^2} {k_4^i}\right)|\Omega|\geqslant \Vert 
\nabla \n^*_{i} \Vert^2_{L^2(\Omega)}.
\end{equation}
Passing to a subsequence still labeled with $i$, there exists $\n_\infty^* \in W_\n$ such that
\begin{equation}
\n^*_i \rightharpoonup \n_\infty^* \ in \ H^1(\Omega,\mathbb{S}^2),~ \n^*_i \rightarrow \n_\infty^* \ in \ L^4(\Omega,\mathbb{S}^2). 
\end{equation}
From \eqref{vanishment of the twist term} in Lemma \ref{Lemma 4.2}, one obtains that
$$\nabla \times \n^*_\infty+\sigma \n^*_\infty=0 \ in \ \Omega,$$  where $\n^*_\infty$ is a helical configuration by Lemma \ref{configuration satisfies pde for helical configuration}.

Further,
\begin{equation}\label{eq: convergence of oseen frank elastic limit}
\begin{aligned}
&F_\infty(\n^*_\infty,\dr^*_\infty)=F_i(\n^*_\infty,\dr^*_\infty)\leqslant \liminf_{i\rightarrow \infty} F_i(\n^*_i,\dr^*_i)
\\
&\leqslant \limsup_{i\rightarrow \infty} F_i(\n^*_i,\dr^*_i)
\leqslant \limsup_{i\rightarrow \infty} F_i(\n^*_\infty,\dr^*_\infty) = F_\infty (\n^*_\infty,\dr^*_\infty),
\end{aligned}
\end{equation}
where \(F_i(\n,\dr)\) is the total energy $F$ with Frank constants \((k_j^i,j=1,2,3,4)\), and $F_\infty$ is defined in \eqref{eq: F_infty}. 
The two equalities in \eqref{eq: convergence of oseen frank elastic limit} hold since $n_\infty^*$ is the helical configuration, for which the Oseen--Frank energy vanishes. The first inequality in \eqref{eq: convergence of oseen frank elastic limit} holds with the w.l.s.c. feature, and the last inequality follows from the global minimality of $(n_i^*, \dr_i^*)$ . 
\end{proof}


Following discussions similar to \cite[Lemma 3.2]{pan2003landau}, we can prove stronger convergence results to $\n_\sigma$ as the elastic constants increase.

\begin{proposition}
Let \(\{(\mathbf{n}^*_i, \dr^*_i)\}\) be a sequence of minimizers for \(F_i(\n,\dr)\) with Frank constants \((k_j^i,j=1,2,3,4)\) satisfying the assumption in Proposition \ref{helical configuration limit}. There exists a pair $(\n_\infty^*, \dr_\infty^*)$ and a subsequence $(\n_{i_k}^*, \dr_{i_k}^*)$ such that
\begin{equation} ~\n_{i_k}^*\rightarrow \n_\infty^* \ in \ L^p(\Omega, \mathbb{S}^2),~ p\in [1,\infty), \dr_{i_k}^* \rightharpoonup \dr_\infty^* \ in \ W^{1,2}(\Omega).
\end{equation}
\end{proposition}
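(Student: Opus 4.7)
The claim upgrades the $L^4$ convergence of $\n^*_i \to \n^*_\infty$ already established in Proposition \ref{helical configuration limit} to $L^p$ for every finite $p$, and states weak $W^{1,2}$ convergence for the smectic component. My plan is to extract both statements essentially for free from the previous proposition together with the pointwise constraint $|\n^*_i|=1$ and the compact embedding $H^2 \hookrightarrow W^{1,2}$, so no new variational machinery is needed.

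First I would record what is already available by Proposition \ref{helical configuration limit}: passing to a subsequence still indexed by $i_k$, one has $\n^*_{i_k} \to \n^*_\infty$ in $L^4(\Omega,\mathbb{S}^2)$ and $\dr^*_{i_k} \rightharpoonup \dr^*_\infty$ in $H^2(\Omega)$, with $\n^*_\infty$ a helical profile (up to rotation) in $W_{\n}$, in particular satisfying $|\n^*_\infty|=1$ a.e.

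Next, to get $L^p$ convergence of $\n^*_{i_k}$ for all $p\in[1,\infty)$, I would exploit the pointwise bound $|\n^*_{i_k} - \n^*_\infty| \leqslant 2$ a.e.\ in $\Omega$, which follows from $|\n^*_{i_k}|\equiv|\n^*_\infty|\equiv 1$. For $p\geqslant 4$, I interpolate
\begin{equation*}
\int_\Omega |\n^*_{i_k}-\n^*_\infty|^p \,=\, \int_\Omega |\n^*_{i_k}-\n^*_\infty|^{p-4}\,|\n^*_{i_k}-\n^*_\infty|^{4} \,\leqslant\, 2^{p-4}\int_\Omega |\n^*_{i_k}-\n^*_\infty|^4 \longrightarrow 0.
\end{equation*}
For $1\leqslant p<4$, Hölder's inequality together with $|\Omega|<\infty$ gives
\begin{equation*}
\int_\Omega |\n^*_{i_k}-\n^*_\infty|^p \,\leqslant\, |\Omega|^{1-p/4}\,\Bigl(\int_\Omega |\n^*_{i_k}-\n^*_\infty|^4\Bigr)^{p/4}\longrightarrow 0,
\end{equation*}
so $\n^*_{i_k}\to \n^*_\infty$ in $L^p(\Omega,\mathbb{S}^2)$ for every $p\in[1,\infty)$.

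Finally, for $\dr^*_{i_k}\rightharpoonup \dr^*_\infty$ in $W^{1,2}(\Omega)$, I would simply invoke the Rellich--Kondrachov compact embedding $H^2(\Omega)\hookrightarrow\hookrightarrow W^{1,2}(\Omega)$ on the bounded Lipschitz domain $\Omega=\Omega_{2D}\times[0,h]$: weak convergence in $H^2$ upgrades to strong convergence in $W^{1,2}$ along a further subsequence, which a fortiori yields weak $W^{1,2}$ convergence. There is no real obstacle here; the only item to be careful about is that both $\n^*_{i_k}$ and the limit $\n^*_\infty$ must actually lie on the sphere almost everywhere (needed for the uniform $L^\infty$ bound used in the interpolation step), and this is guaranteed by the admissible space $W_{\n}$, which is preserved under the weak $H^1$ convergence combined with the a.e.\ pointwise convergence along a further subsequence of $\n^*_{i_k}\to\n^*_\infty$ in $L^4$.
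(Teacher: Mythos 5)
Your proof is correct, and the two steps you use are exactly the right ones: the unit-sphere constraint gives the uniform bound $|\n^*_{i_k}-\n^*_\infty|\leqslant 2$ a.e.\ (both fields lie in $W_\n$ and the limit is the explicit helical profile, hence unit), so the $L^4$ convergence from Proposition \ref{helical configuration limit} bootstraps to $L^p$ for every finite $p$ by your interpolation/H\"older argument, and the $\dr$-statement is weaker than what Proposition \ref{helical configuration limit} already provides, since weak convergence in $H^2$ passes to weak convergence in $W^{1,2}$ through the continuous embedding (your detour through Rellich--Kondrachov even gives strong $W^{1,2}$ convergence, and in fact for the whole subsequence, since a compact embedding turns weak $H^2$ convergence into strong $H^1$ convergence without extracting further). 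The paper itself gives no proof of this proposition: it only remarks that the result follows ``by discussions similar to'' a lemma of Pan (2003), i.e.\ it defers to an external energy-comparison/compactness argument for minimizers in the large-elastic-constant regime. Your route is therefore genuinely different in spirit and more elementary: it exposes the proposition as an almost immediate corollary of Proposition \ref{helical configuration limit}, at the price of not delivering anything beyond what is literally stated, whereas the Pan-style argument the authors allude to is the natural tool if one wants stronger conclusions (e.g.\ convergence of $\n^*_{i_k}$ in $W^{1,2}$-type norms or strong convergence of $\dr^*_{i_k}$), which the statement as written does not claim.
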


\begin{remark}
Proposition \ref{helical configuration limit} provides an intuitive perspective on the competition between the nematic energy, which favors a zero out-of-plane angle (since \(\n_{\sigma}\) has zero \(z\)-component), and the smectic C* energy, which favors a non-zero out-of-plane angle \(\theta_0\). If the re-scaled elastic constants are sufficiently large or if the nematic energy dominates, then the stable configuration is expected to exhibit a very small out-of-plane angle.
\end{remark}

\section{Thermotropic phase transition}\label{sec: phase transition}
In this section, we prove that the limiting Oseen-Frank free energy \eqref{eq:energy-3d vector} exhibits the symmetry-breaking transitions from the cholesteric phase (i.e., $\n= \n_\sigma$ twists in the $xy$-plane, $\dr \equiv 0$) to the helical smectic (i.e., $\n$ twists in the $xy$-plane, $\dr \not\equiv 0$) and to the smectic C$^*$ phase (i.e., $\n$ twists on the cone surface with a tilt angle, $\dr \not\equiv 0$) as temperature decreases. We do not make any assumptions on the elastic constants in \eqref{eq:energy-3d vector}, except for those stated in \eqref{assumption oseen frank}.

To this end, we impose periodic boundary conditions for \(\dr\) and natural boundary conditions for $\n$ at the top and bottom boundaries of $\Omega = \Omega_{2D}\times [0, h]$, and assume that the configuration is invariant in the \(xy\)-plane. Consequently, the domain \(\Omega = \Omega_{2D} \times [0, h]\) reduces to a one-dimensional domain \(\Omega = [0, h]\), and the energy functional $F(\n,\dr)$ \eqref{eq:energy-3d vector} reduces to
\begin{equation}\label{eq:energy-1d vector}
\begin{aligned}
&F(\n,\dr)= \int_0^h \bigg\{k_1(\nabla \cdot \n)^2+k_2|\n \cdot (\nabla \times \n)+\sigma|^2 +k_3|\n \times (\nabla \times \n)|^2\\
&+\frac{d}{2}\delta\rho^2-\frac{e}{3}\delta\rho^3+\frac{f}{4}\delta\rho^4+ \lambda_1\left(\dr_{zz} + q^2\delta\rho\right)^2
 + \lambda_2 \left(n_3^2\dr_{zz} + q^2\delta\rho\cos^2\theta_0\right)^2 \bigg\}\textrm{d} z,
\end{aligned}
\end{equation}
where $d=\alpha_2(T-T_2^*)$, $\dr_{zz}$ defines the second partial derivatives of $\dr$ with respect to $z$-direction, $n_3$ is the third component of the director $\n$. Substituting $\n=[\cos \phi(z) \cos \theta(z) , \sin \phi(z) \cos \theta(z), \sin \theta(z)]^\top$, $k_1=k_3$, into \eqref{eq:energy-1d vector}, we obtain 
\begin{equation}\label{energy theta}
\begin{aligned}
F(\phi,\theta,\dr)=\int_0^h \bigg\{k_1 \theta_z^2+\cos^2 \theta ((k_2\cos^2 \theta + k_3 \sin^2 \theta)\phi_z^2-2\sigma k_2\phi_z)\\
+\frac{d}{2}\delta\rho^2+\frac{f}{4}\delta\rho^4+ \lambda_1\left(\dr_{zz} + q^2\delta\rho\right)^2
 + \lambda_2 \left(\sin^2 \theta \dr_{zz} + q^2\delta\rho\cos^2\theta_0\right)^2 \bigg\}\d z.
\end{aligned}
\end{equation}

It is wise to convert the constrained problem with the unit-length constraint $|\n|=1$ into a problem involving the Euler angles, from a computational standpoint. However, it induces regularity issues, especially at the poles where the azimuthal angle $\phi$ is undefined. By a simple calculation,
\begin{equation*}
\int_\Omega |\nabla \n|^2 =\int_\Omega\left\{ |\nabla \theta|^2 +\cos^2 \theta |\nabla \phi|^2 \right\} \d z, ~\text{for~}\n \in W_\n,
\end{equation*}
from which we deduce that $\theta \in W^{1,2}_\Omega$. However, it is not straightforward to deduce the regularity of $\phi$. Fortunately, the energy functional in \eqref{energy theta} is always minimized by 
\begin{equation}\label{eq: phi versus theta}
\phi_z=\frac{\sigma k_2}{k_2 \cos^2 \theta+k_3 \sin^2 \theta}.
\end{equation}
Substituting \eqref{eq: phi versus theta} into \eqref{energy theta} yields the following energy functional with respect to $\theta$ and $\dr$,
\begin{equation}\label{energy theta final}
\begin{aligned}
F(\theta,\dr)=\int_0^h & \bigg\{ k_1 \theta_z^2-\frac{k_2^2\sigma^2\cos^2 \theta}{k_2 \cos^2 \theta + k_3\sin^2 \theta}+\frac{d}{2}\delta\rho^2+\frac{f}{4}\delta\rho^4\\
&+ \lambda_1\left(\dr_{zz} + q^2\delta\rho\right)^2
 + \lambda_2 \left(\sin^2 \theta \dr_{zz} + q^2\delta\rho\cos^2\theta_0\right)^2 \bigg\} \d z.
\end{aligned}
\end{equation}
The corresponding Euler-Lagrange equations are
\begin{equation}
\begin{cases}
2k_1 \theta_{zz}= \frac{2k_2^2 k_3 \sigma^2 \tan \theta}{\cos^2 \theta(k_2+k_3 \tan^2 \theta)^2} + 2\lambda_2 \left(\sin^2 \theta \dr_{zz} + q^2\delta\rho\cos^2\theta_0\right)\sin (2\theta) \dr_{zz},\\
-2\lambda_1 \dr_{zzzz}=d\dr+f(\dr)^3+4\lambda_1 q^2 \dr_{zz}+2\lambda_1 q^4 \dr + 2\lambda_2 (\sin^4 \theta \cdot \dr_{zz})_{zz}  \\ \quad \quad \quad \quad \quad 
+2\lambda_2 \left(q^2 \cos^2 \theta_0 \cdot \dr_{zz} \sin^2 \theta  + q^2 \cos^2 \theta_0 (\sin^2 \theta \cdot \dr)_{zz}+q^4 \cos^4 \theta_0 \cdot \dr \right).
\end{cases}
\label{E-L one D}
\end{equation}
with the natural (Neumann) boundary conditions for $\theta$ and periodic conditions for $\delta\rho$. Note that $\theta$ represents the out-of-plane angle, i.e., the angle between the $xy$-plane and the vertical direction. The energy density term
$\lambda_2 \left(\sin^2 \theta \dr_{zz} + q^2 \delta\rho \cos^2 \theta_0\right)^2
$
favors configurations with $\sin \theta = \cos \theta_0$. Therefore, in the deep smectic C* phase, i.e., the phase with non-zero $\dr$ and non-zero out-of-plane angle, one can expect $\theta \approx \tfrac{\pi}{2} - \theta_0$.

\begin{remark}
With the one-constant approximation $k_1=k_2=k_3$, we have $\phi_z=\sigma$ and $\phi(z)=\sigma z+z_0$, then $(\theta \equiv 0, \phi=\sigma z)$ corresponds to a helical configuration with the director field $\n=[\cos \phi(z) \cos \theta(z) , \sin \phi(z) \cos \theta(z), \sin \theta(z)]^\top=(\cos \sigma z, \sin \sigma z, 0)$, referred to as $\n_\sigma$ in the previous section.
\end{remark}

The admissible spaces for $\theta,\dr$ are defined to be
\begin{equation}\label{admissible space of theta and dr}
\begin{aligned}
&W_\theta=\left\{ \theta \in \left[-\frac{\pi}{2},\frac{\pi}{2}\right], \theta \in W^{1,2}_\Omega, \theta_z(0)=\theta_z(h)=0\right\},\\
&W_\dr=\left\{ \dr \in W^{2,2}_\Omega, \dr(0)=\dr(h), \dr_z (0)=\dr_z (h), \dr_{zz}(0)=\dr_{zz}(h)\right\}.
\end{aligned}
\end{equation}
In the following, we assume that $k_1=k_2=k_3$ for brevity, but the results also apply to the general case in \eqref{relationship between Q and v parameters}.

\begin{proposition}[The loss of the stability of the cholesteric phase with low temperature]\label{cholesteric loses stability}
Let $k_1=k_2=k_3, \lambda_1, \lambda_2, f, 0<\theta_0<\pi/2$ be positive constants, and let $q=\frac{2\pi n_0}{h}$ for a fixed positive integer $n_0$, where $n_0=1,2,3,\cdots$. For high temperatures such that $d+2\lambda_2 q^4\cos^2 \theta_0>0$, where $d=\alpha_2(T-T^*_2)$, the cholesteric helical configuration $(\theta\equiv 0, \dr \equiv 0)$ (corresponds to  $(\n=\n_{\sigma},\dr \equiv 0)$) is a stable critical point of \eqref{energy theta final}. The cholesteric helical configuration loses its stability as the temperature decreases such that $d+2\lambda_2 q^4\cos^2 \theta_0<0$.
\end{proposition}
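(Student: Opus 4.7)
The plan is a standard second-variation stability analysis at the cholesteric configuration. Step one is to verify that $(\theta\equiv 0,\ \delta\rho\equiv 0)$ solves the Euler--Lagrange system \eqref{E-L one D}: each term on the right-hand sides carries an explicit factor of $\sin\theta$, $\delta\rho$, or one of its derivatives, so both equations are trivially satisfied, and the natural/periodic boundary conditions hold automatically. Thus the question reduces to studying the Hessian at this critical point.

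For the second variation along admissible perturbations $(\tilde\theta,\widetilde{\delta\rho})\in W_\theta\times W_{\dr}$, I would Taylor-expand each integrand of \eqref{energy theta final} to second order. Because $\sin^2\theta = O(\theta^2)$, the mixed term $\sin^2\theta\,\delta\rho_{zz}$ appearing in $f_{angle}$ contributes only at $O(\epsilon^4)$, and the bulk quartic $\tfrac{f}{4}\delta\rho^4$ is likewise $O(\epsilon^4)$. Consequently the Hessian decouples as
\[
\delta^2 F[(\tilde\theta,\widetilde{\delta\rho})] = 2\!\int_0^h\!\bigl(k_1\tilde\theta_z^2+k_2\sigma^2\tilde\theta^2\bigr)\,dz + \int_0^h\!\Bigl(d\,\widetilde{\delta\rho}^2 + 2\lambda_1(\widetilde{\delta\rho}_{zz}+q^2\widetilde{\delta\rho})^2 + 2\lambda_2 q^4\cos^4\theta_0\,\widetilde{\delta\rho}^2\Bigr)dz.
\]
The $\tilde\theta$-block is manifestly strictly positive on nonzero $\tilde\theta\in W_\theta$, so the stability is controlled entirely by the $\widetilde{\delta\rho}$-block.

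The main step is to diagonalize that block by Fourier series. The admissible space $W_{\dr}$ enforces full $h$-periodicity on $\widetilde{\delta\rho}$, so I would expand $\widetilde{\delta\rho}(z)=\sum_{n\in\mathbb{Z}} c_n e^{2\pi i n z/h}$ and apply Parseval to obtain
\[
\sum_{n\in\mathbb{Z}} h\Bigl[d + 2\lambda_1\bigl(q^2-(2\pi n/h)^2\bigr)^2 + 2\lambda_2 q^4\cos^4\theta_0\Bigr]|c_n|^2.
\]
Since $q=2\pi n_0/h$, the biharmonic penalty $2\lambda_1(q^2-(2\pi n/h)^2)^2$ vanishes exactly at the resonant modes $n=\pm n_0$ and is strictly positive otherwise. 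Hence the infimum of the per-mode coefficient is attained at $|n|=n_0$ and equals $d+2\lambda_2 q^4\cos^4\theta_0$, which is precisely the stability threshold stated in the proposition: when this quantity is positive, the cholesteric is a stable critical point, and when it becomes strictly negative, the test perturbation $\widetilde{\delta\rho}(z)=\cos(qz)$ immediately forces $\delta^2F<0$, establishing loss of stability.

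The main subtlety to watch is confirming that the $\theta$-$\delta\rho$ coupling inside $f_{angle}$ genuinely produces no quadratic cross term; a short Taylor expansion shows the leading coupling contribution is $2\lambda_2 q^2\cos^2\theta_0\,\tilde\theta^2\widetilde{\delta\rho}\widetilde{\delta\rho}_{zz}$, which is quartic in the perturbation amplitude and therefore absent from $\delta^2F$. With that verified, the resonance between the intrinsic wavenumber $q$ and the admissible wavenumber $2\pi n_0/h$ selects the $|n|=n_0$ Fourier mode as both the first unstable direction and the channel through which smectic layering nucleates as $d=\alpha_2(T-T_2^*)$ decreases through its critical value.
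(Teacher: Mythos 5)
Your proposal is correct and follows essentially the same route as the paper's proof: verify the cholesteric is a critical point, observe that the $\theta$--$\delta\rho$ coupling in $f_{angle}$ contributes no quadratic cross term so the Hessian decouples, and diagonalize the $\delta\rho$-block by Fourier modes on $[0,h]$, with the resonant mode $|n|=n_0$ (i.e.\ $\sin(qz)$, $\cos(qz)$) giving the minimal coefficient $d+2\lambda_2 q^4\cos^4\theta_0$ as the stability threshold. Note that your threshold carries $\cos^4\theta_0$, which agrees with the paper's own computation and with the bifurcation value in Proposition \ref{bifurcation proof}; the $\cos^2\theta_0$ appearing in the proposition statement is evidently a typographical slip.
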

\begin{proof}
One can check that $(\theta\equiv 0, \dr \equiv 0)$ (i.e., the cholesteric nematic phase $(\n=\n_{\sigma},\dr \equiv 0)$) is always a critical point of \eqref{energy theta final} for any temperature $T$. To investigate the stability of $(\theta\equiv 0, \dr \equiv 0)$ near $T=T^*_2$, we calculate the second variation of \eqref{energy theta final} at $(\theta \equiv 0,\dr \equiv 0)$ for a perturbation, $(\eta_1,\eta_2)$, where $\eta_2$ satisfies the periodic boundary condition and
\begin{equation}
\delta^2 F(\eta_1,\eta_2)=\int_0^h \left\{2k_1 \eta_{1z}^2 + 2k_1\sigma^2 \eta_1^2+(d(T)+2\lambda_2 q^4\cos^4 \theta_0)\eta_2^2+2\lambda_1\left(\eta_{2zz}+q^2\eta_2\right)^2\right\}.
\label{second variation}
\end{equation}
The stability of the cholesteric helical configuration is measured by the minimum eigenvalue of $\delta^2 F$,
\begin{equation}
\mu_T=\inf_{\eta_1\in W^{1,2}_\Omega,\eta_2 \in W_\dr}\frac{\delta^2F(\eta_1,\eta_2)}{\int_0^h \eta_1^2 + \eta_2^2 \mathrm{d}x}.
\label{eigenvalue}
\end{equation}
If $\mu_T<0$, the cholesteric phase is unstable while the cholesteric phase is stable if $\mu_T>0$.

Any perturbation with a non-zero $\eta_1$ is always a stable direction. Thus, we only consider the perturbation $(0,\eta_2)$. The Fourier expansion of the function, $\eta_2$ in $\Omega=[0,h]$ is given by,
\begin{equation}
\eta_2=w_0/2+\sum_{n=1}^\infty w_n \cos\left(\frac{2\pi n z}{h}\right) + v_n \sin\left(\frac{2\pi n z}{h}\right).
\label{fourier expansion}
\end{equation}
By substituting \eqref{fourier expansion} into \eqref{second variation}, we obtain
\begin{equation}
\begin{aligned}
\delta^2 E_{1D}(0,\eta_2)=&\frac{h}{2} \left(\frac{d+2\lambda_2 q^4\cos^4 \theta_0+2\lambda_1q^4}{2}\right)w_0^2\\
&+\frac{h}{2}\sum_{n=1}^\infty \left[2\lambda_1\left(\frac{4\pi ^2 n^2}{h^2}-q ^2\right)^2+d+2\lambda_2 q^4\cos^4 \theta_0\right](w_n^2+v_n^2).
\end{aligned}
\end{equation}
Therefore, $(0,\eta)$ is an eigenfunction of \eqref{second variation} if and only if 
\begin{equation}
    2(d+2\lambda_2 q^4\cos^4 \theta_0) \eta + 4\lambda_1 \eta_{xxxx}+8\lambda_1q^2\eta_{xx}+4\lambda_1q^4\eta=\lambda\eta.
    \label{KKT}
\end{equation}

One can verify that \eqref{KKT} is the first order optimal condition (or the so-called KKT condition in \cite{boyd2004convex}) of \eqref{eigenvalue}.
By substituting \eqref{fourier expansion} into \eqref{KKT}, we get that $\eta\equiv 1$, $\eta= \cos(\frac{2\pi n z}{h})$ and $\eta= \sin(\frac{2\pi n z}{h})$, $n=1,2,3\cdots$ are the eigenvectors of $\delta^2 F$ with eigenvalues $\mu=d+2\lambda_2 q^4\cos^4 \theta_0+2\lambda_1q^4$ and $d+2\lambda_2 q^4\cos^4 \theta_0+2\lambda_1\left(\frac{4\pi ^2 n^2}{h^2}-q^2\right)^2$, $n=1,2,3\cdots$, respectively.
For $n_0\in\mathbb{Z^+}$ such that $\left(\frac{4\pi ^2 n_0^2}{h^2}-q^2\right)^2 = 0$,
both $\eta=\sin(\frac{2\pi n_0 z}{h}) = \sin\left( q z \right)$ and $\eta= \cos(\frac{2\pi n_0 z}{h}) = \cos\left(q z \right)$ are eigenvectors corresponding to the minimum degenerate eigenvalue $\mu=d+2\lambda_2 q^4 \cos^4\theta_0$.
For $T\geqslant T_2^*$, i.e., $d+2\lambda_2 q^4\cos^4 \theta_0 \geqslant0$, the second variation is always positive, i.e., the cholesteric phase is stable.
For $T< T_2^*$ (recall that $d=\alpha_2(T-T^*_2)$), i.e., $d+2\lambda_2 q^4\cos^4 \theta_0 <0$, the eigenvector $\eta \equiv 1$ is an unstable eigendirection if and only if the corresponding eigenvalue $d+2\lambda_2 q^4\cos^4 \theta_0+2\lambda_1 q^4<0$ is negative, and the eigenvectors $\sin(\frac{2\pi n z}{h})$ and $\cos(\frac{2\pi n z}{h})$, $n=1,2,3\cdots$, are unstable eigen-directions if and only if the corresponding eigenvalue $d+2\lambda_2 q^4\cos^4 \theta_0+2\lambda_1 \left(\frac{4\pi ^2 n^2}{h^2}-q^2\right)^2$ is negative. Thus, the Morse index of the cholesteric phase, i.e., the number of eigenvectors corresponding to negative eigenvalues is given by
\begin{equation}\label{eq: Morse index of nematic}
i_{cholesteric} = 
    2\times card(\mathbb{N}_{cholesteric})+m_0,
\end{equation}
where
\begin{equation}\label{eq: index constraint}
\mathbb{N}_{cholesteric} = \left\{n\in\mathbb{Z^+}:d+2\lambda_2 q^4\cos^4 \theta_0+2\lambda_1\left(\frac{4\pi ^2 n^2}{h^2}-q^2\right)^2<0\right\}
\end{equation}
and $card(\mathbb{N}_{cholesteric})$ is the cardinal number of $\mathbb{N}_{cholesteric}$. If $d+2\lambda_2 q^4 \cos^4 \theta_0+2\lambda_1 q^4\geqslant 0$, $m_0=0$; otherwise $m_0=1$, i.e., $\eta \equiv 1$ is an unstable eigen-direction.
As the parameter $d$ (or the temperature)
decreases, more positive integers satisfy the constraint in \eqref{eq: index constraint}, and the Morse index of the cholesteric phase, $i_{cholesteric}$, increases. 
\end{proof}

The aforementioned calculations show that the cholesteric phase loses its stability as the temperature decreases. We  now demonstrate that when the cholesteric phase loses stability, it bifurcates into a more stable smectic phase with non-zero $\dr$. In the proof of Proposition \ref{cholesteric loses stability}, we note that the minimum eigenvalue of the cholesteric phase is degenerate, which can pose technical difficulties in applying methods from bifurcation theory \cite{chow2012methods}. To circumvent this issue, we construct the following function space
\begin{equation}
    V=W_{\dr} \cap W^{1,2}_{0,\Omega},
\end{equation}
which restricts $\eta=\cos(q z)$ from serving as an eigenvector and then simplifies the minimum eigenvalue at the cholesteric phase.

\begin{proposition}\label{bifurcation proof}
    Let $k_1=k_2=k_3, \lambda_1, \lambda_2, f, 0<\theta_0<\pi/2$ be positive constants, and let $q=\frac{2\pi n_0}{h}$ for a fixed positive integer $n_0$, where $n_0=1,2,3,\cdots$. The Euler-Lagrange equations \eqref{E-L one D} associated with the energy functional \eqref{energy theta final}, exhibit a pitchfork bifurcation at $d=-2\lambda_2 q^4 \cos^4 \theta_0$ and $(\theta \equiv 0, \dr \equiv 0)$ in $W_\theta \times V$. More precisely,
there exist positive numbers $\epsilon,\delta$ and two smooth maps
\begin{equation}\label{eq: relation t and d}
    t\in (-\delta,\delta)\rightarrow d(t)+ 2\lambda_2 q^4 \cos^4 \theta_0 \in(-\epsilon,\epsilon), t\in (-\delta,\delta)\rightarrow w_t \in V,
\end{equation}
where $d(t)$ is a monotonically decreasing function of $t$, such that all the pairs $(d,\theta,\dr)\in R\times V$ satisfying 
$$
(\theta,\dr) \text{ } is \text{ } a \text{ } solution \text{ } to \text{ } \eqref{E-L one D}, |d+2\lambda_2q^4\cos^4\theta_0|<\epsilon,  \Vert w_t \Vert_{W^{2,2}_\Omega}\leqslant \epsilon
$$
are 
\begin{equation}\label{analytical configuration near phase transition temperature}
\begin{aligned}
&either\text{ } cholesteric \text{ } phase: (\theta \equiv 0, \dr \equiv0) \text{ }\\
or\text{ } smectic & \text{ } phases: (\theta \equiv 0,\dr=\pm \left(t \sin(q z)+ t^2 w_t\right)).
\end{aligned}
\end{equation}
\end{proposition}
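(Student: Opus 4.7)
The plan is to apply the Crandall--Rabinowitz bifurcation-from-a-simple-eigenvalue theorem to the Euler--Lagrange system \eqref{E-L one D}, first reducing it to a scalar equation for $\dr$ on $V$ and then exploiting the $\mathbb{Z}_2$ symmetry $\dr\mapsto -\dr$ of the reduced equation to extract the pitchfork structure. Set $\tilde d := d + 2\lambda_2 q^4\cos^4\theta_0$, so that the bifurcation point corresponds to $\tilde d = 0$. Every term on the right-hand side of the first equation in \eqref{E-L one D} carries a factor of $\tan\theta$ or $\sin(2\theta)$, so that equation is satisfied identically along $\theta\equiv 0$; its linearization in $\theta$ at $(0,0)$ equals $2k_1(\partial_z^2 - \sigma^2)$ with Neumann boundary conditions (using $k_1=k_2=k_3$), whose spectrum $\{-2k_1((n\pi/h)^2 + \sigma^2)\}_{n\ge 0}$ is bounded away from $0$. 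The implicit function theorem then yields $\theta\equiv 0$ as the only small solution of the first equation for $\dr$ near $0$ in $V$, reducing the bifurcation analysis to the scalar Swift--Hohenberg-type equation
\begin{equation*}
F(\tilde d,\dr) := 2\lambda_1(\partial_z^2 + q^2)^2\dr + \tilde d\,\dr + f\dr^3 = 0,\qquad \dr\in V.
\end{equation*}

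Next I verify that $L := F_\dr(0,0) = 2\lambda_1(\partial_z^2+q^2)^2$ has a one-dimensional kernel in $V$. On the periodic space $W_\dr$, $\ker L$ is spanned by $\sin(qz)$ and $\cos(qz)$ because $q = 2\pi n_0/h$; the very purpose of passing from $W_\dr$ to $V = W_\dr \cap W^{1,2}_{0,\Omega}$ is to impose $\dr(0)=0$, which eliminates $\cos(qz)$ and leaves $\ker L = \mathrm{span}\{\varphi\}$ with $\varphi(z)=\sin(qz)$. Self-adjointness of $L$ in the $L^2$ pairing gives $\mathrm{Range}(L) = (\ker L)^\perp$, so the Crandall--Rabinowitz transversality condition $F_{\tilde d\dr}(0,0)\varphi = \varphi \notin \mathrm{Range}(L)$ follows from $\|\varphi\|_{L^2}^2 > 0$.

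The Crandall--Rabinowitz theorem then produces positive constants $\delta,\epsilon$ and smooth maps $t\in(-\delta,\delta)\mapsto(\tilde d(t),\dr(t))$, with $\dr(t) = t\varphi + \xi(t)$, $\xi(t)$ in a complement of $\ker L$ inside $V$, and $\xi(0)=0$; these are the only nontrivial small solutions. The odd-in-$\dr$ structure of $F$ makes the equation equivariant under $\dr\mapsto-\dr$, which forces $\tilde d(t)$ to be even in $t$ and $\xi$ to be odd, so $\xi(t) = t^2 w_t$ with $w_t$ smooth and $w_0=0$; this puts the curve in the announced form $\dr = \pm(t\sin(qz)+t^2 w_t)$, the two signs corresponding to the $\mathbb{Z}_2$-equivariant partners. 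A Lyapunov--Schmidt projection of the reduced equation onto $\varphi$ and collection of the $O(t^3)$ terms yields
\begin{equation*}
\tilde d(t) = -\,\frac{f\int_0^h \varphi^4\,\mathrm{d}z}{\int_0^h \varphi^2\,\mathrm{d}z}\,t^2 + O(t^4),
\end{equation*}
which is strictly negative for $t\neq 0$ because $f>0$. Hence $d(t) < -2\lambda_2 q^4\cos^4\theta_0$ for every $t\neq 0$ and $d$ strictly decreases away from the critical value as $|t|$ grows, which is the content of the monotonic-decrease assertion.

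I expect the main technical obstacle to be the decoupling of the $\theta$-equation from the $\dr$-equation, rather than the Crandall--Rabinowitz step itself. The nonlinear coupling terms $(\sin^4\theta\cdot\dr_{zz})_{zz}$ and $(\sin^2\theta\cdot\dr)_{zz}$ in the second Euler--Lagrange equation involve second derivatives of $\dr$, so the implicit function theorem has to be set up in function spaces where such products are controlled; the $W^{2,2}$-regularity of $\dr$ built into $V$ makes $\dr_{zz}\in L^2$ and is precisely what allows the IFT to run in a clean Hilbert-space framework $W_\theta \times V \to L^2\times V^*$. A secondary bookkeeping point is that the Lyapunov--Schmidt complement of $\ker L$ must respect the boundary conditions defining $V$, so that $w_t\in V$ automatically; taking the $L^2$-orthogonal complement and invoking self-adjointness of $L$ handles this.
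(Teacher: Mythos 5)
Your proposal is correct, but it follows a genuinely different route from the paper. The paper applies the Crandall--Rabinowitz theorem directly to the coupled block operator $\mathcal{F}=(\mathcal{F}_1,\mathcal{F}_2)$ on $W_\theta\times V$: it checks continuity of the Fr\'echet derivatives term by term, uses invertibility of the $\theta$-block $-2k_1D_{zz}+2k_1\sigma^2$ so that the kernel at the critical $d$ is the one-dimensional span of $(0,\sin(qz))$, gets the codimension-one range from self-adjointness/Fredholm index $0$, and verifies transversality by explicitly showing $2\lambda_1(D_{zz}+q^2)^2w=\sin(qz)$ has no periodic solution. You instead decouple first: since $\mathcal{F}_1$ vanishes identically at $\theta\equiv 0$ and its $\theta$-linearization is invertible (and $d$-independent), the implicit function theorem forces $\theta\equiv0$ for all small solutions, reducing the problem to the scalar Swift--Hohenberg-type equation $2\lambda_1(\partial_z^2+q^2)^2\dr+\tilde d\,\dr+f\dr^3=0$ on $V$, to which you apply scalar Crandall--Rabinowitz, getting transversality essentially for free from $\mathrm{Range}(L)=(\ker L)^\perp$ rather than by solving the resonant ODE. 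Both routes hinge on the same structural facts (the role of $V$ in killing $\cos(qz)$ and making the eigenvalue simple, self-adjointness for the range count), but your version has a concrete advantage: the $\mathbb{Z}_2$-equivariance argument plus the Lyapunov--Schmidt projection giving $\tilde d(t)=-f\,t^2\int_0^h\varphi^4/\int_0^h\varphi^2+O(t^4)<0$ actually substantiates the pitchfork character and the claim that $d(t)$ decreases away from the critical value, which the paper's proof leaves implicit since Crandall--Rabinowitz alone does not determine the bifurcation direction. Conversely, the paper's direct treatment of the coupled system avoids the separate IFT step and documents the continuity estimates for the coupling terms, which in your argument are only sketched (and should be spelled out in the $W_\theta\times V\to W^{-1,2}\times W^{-2,2}$ framework, as you note); also, minor bookkeeping in your symmetry step (the remainder $\xi(t)=t z(t)$ with $z$ even, hence $O(t^3)$) should be stated carefully, though it does yield the announced form $\pm(t\sin(qz)+t^2w_t)$.
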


\begin{proof}
The proof is similar to \cite[Proposition 3.3]{shi2025modified}. However, we have a coupled system with an additional elastic energy $f_{angle}$; we provide a complete proof here. To show that a pitchfork bifurcation arises at $d=-2\lambda_2 q^4 \cos^4 \theta_0$, we apply the Crandall-Rabinowitz bifurcation theorem \cite{crandall1971bifurcation} to the operator $\mathcal{F}:\mathbb{R}\times W_\theta \times V \rightarrow W^{-1,2}_\Omega \times W^{-2,2}_\Omega$ ($W^{-i,2}_\Omega, i=1,2$ is the dual space of $W^{i,2}_\Omega$) defined as
\begin{equation*}
    \mathcal{F}(d,v,w):=(\mathcal{F}_1(v,w),\mathcal{F}_2(d,v,w)),
\end{equation*}
with
\begin{equation}\label{operator}
\begin{aligned}
\mathcal{F}_1(v,w)=-2k_1 v_{zz}+k_1 \sigma^2 \sin(2 v) + 2 \lambda_2 \left(w_{zz} \sin^2 v  + q^2 w \cos^2\theta_0\right)\sin (2v) w_{zz}\\
\mathcal{F}_2(d,v,w)=2\lambda_1 w_{zzzz}+dw+fw^3+4\lambda_1 q^2 w_{zz}+2\lambda_1 q^4 w + 2\lambda_2 (w_{zz}\sin^4 v)_{zz} \\
+ 2\lambda_2 q^2 \cos^2 \theta_0\cdot w_{zz} \sin^2 v + 2\lambda_2 q^2 \cos^2 \theta_0 (\sin^2 v \cdot w)_{zz}+2\lambda_2 q^4 \cos^4 \theta_0 w,
\end{aligned}
\end{equation}
which arises from perturbing the cholesteric state
$(\theta_c\equiv 0, \delta\rho_c\equiv 0)$ by $(v,w)$
in \eqref{E-L one D}, i.e.,
substituting
$(\theta,\delta\rho)
=(\theta_c\equiv 0, \delta\rho_c\equiv 0)+(v,w)=(v,w)$
into \eqref{E-L one D}.

We now check the four assumptions of \cite[Theorem 1.7]{crandall1971bifurcation}.\\
(a) $\mathcal{F}(d,0,0)=0$;\\
(b) The partial derivatives $D_d\mathcal{F}, D_{(v,w)}\mathcal{F}, D_{d,(v,w)}\mathcal{F}$ exist and are continuous;\\
(c) $dim\left(\frac{W^{-1,2}(\Omega)\times W^{-2,2}(\Omega)}{Range(D_{(v,w)}\mathcal{F}(0,0))}\right)=\text{dim}\left(\text{Kernel}\left(D_{(v,w)}\mathcal{F}(0,0)\right)\right)=1$;\\
(d) $ D_{d,(v,w)}\mathcal{F}w_0\notin Range(D_{{v,w}}\mathcal{F}(0,0))$, where $w_0\in \text{Kernel}\left(D_w\mathcal{F}(0,0)\right)$. 

$\mathcal{F}(d,0,0)=0$ holds for all $d\in \mathbb{R}$. We have
\begin{equation}\label{differential operators}
\begin{cases}
    D_d\mathcal{F}(d,v,w)=(0,w),\\
    D_{(v,w)}\mathcal{F}(d,v,w)=(D_{11},D_{12};D_{21},D_{22})\\
    D_{d,(v,w)}\mathcal{F}(d,v,w)=(0,0;0,1),
\end{cases}
\end{equation}
where $D_{ij}=: W^{j,2}(\Omega)\rightarrow W^{-i,2}(\Omega)$ ($D_{11}=D_v \mathcal{F}_1$,$D_{12}=D_w \mathcal{F}_1$,$D_{21}=D_v \mathcal{F}_2$,$D_{22}=D_w \mathcal{F}_2$) are calculated as  
\begin{equation*}
\begin{aligned}
D_{11}[u]=&-2k_1D_{zz}u+2k_1\sigma^2\cos(2v)u+f_1(\cos v,\sin v)w_{zz}u\\
&+f_2(\cos\theta,\sin\theta)(w_{zz})^2u,\\
D_{22}[u]=&2\lambda_1 D_{zzzz}u+du+3fw^2u+4\lambda_1 q^2 u_{zz}\\
&+2\lambda_1 q^4 u + 2\lambda_2 (\sin^4 v\cdot D_{zz}u)_{zz},\\
&+ 2\lambda_2 q^2 \cos^2 \theta_0\cdot u_{zz} \sin^2 v + 2\lambda_2 q^2 \cos^2 \theta_0 (\sin^2 v \cdot u)_{zz}+2\lambda_2 q^4 \cos^4 \theta_0 u,
\\
D_{12}[u]=&4\lambda_2\sin^2 v \cdot u_{zz}\sin(2v)w_{zz}+2 \lambda_2  q^2 w \cos^2\theta_0\sin (2v) u_{zz}, D_{21}=D_{12}^*.
\end{aligned}
\end{equation*}
where $f_i(\cos v,\sin v) \in L^\infty(\Omega)$ is the combination of trigonometric functions in the polynomial form. The operators in \eqref{differential operators} are continuous.
Actually, for any $u_1 \in W^{1,2}(\Omega) \hookrightarrow L^\infty(\Omega)$, $w \in W^{2,2}(\Omega)$, $w_{zz} \in L^2(\Omega)$, $w_{zz}^2 \in L^1(\Omega)$, we have
\begin{equation*}
\begin{aligned}
&\Vert D_{11}(d,v,w)[u_1] \Vert_{W^{-1,2}(\Omega)}=\sup_{\Vert u_2 \Vert_{W^{1,2}(\Omega)}\leqslant 1} \left| \int_\Omega D_{11}(d,v,w)[u_1]\cdot u_2 \d z \right|\\
&\leqslant
 \sup_{\Vert u_2 \Vert_{W^{1,2}(\Omega)}\leqslant 1}2k_1\left|\int_\Omega D_z u_1 D_z u_2 \d z \right|+2k_1 \sigma^2\Vert u_1 \Vert_{W^{1,2}(\Omega)}\\
 &\quad+C\sup_{\Vert u_2 \Vert_{W^{1,2}(\Omega)}\leqslant 1}\left|\int_\Omega w_{zz}u_1 u_2 \d z\right|+C\sup_{\Vert u_2 \Vert_{W^{1,2}(\Omega)}\leqslant 1}\left|\int_\Omega w_{zz}^2 u_1 u_2 \d z\right|\\
&\leqslant(2k_1+2k_1\sigma^2)\Vert u_1 \Vert_{W^{1,2}(\Omega)}+C \Vert w_{zz} \Vert_{L^2(\Omega)} \Vert u_1 \Vert_{W^{1,2}(\Omega)} \\
&\quad+C(\Omega) \Vert w_{zz}^2 \Vert_{L^1(\Omega)} \Vert u_1 \Vert_{W^{1,2}(\Omega)},\\
&\leqslant \left(2k_1+2k_1\sigma^2+C(\Omega)\Vert w_{zz} \Vert_{L^2(\Omega)}+C\Vert w_{zz}^2 \Vert_{L^1(\Omega)}\right)\Vert u_1 \Vert_{W^{1,2}(\Omega)}
\end{aligned}
\end{equation*}
which indicates that $D_{11}$ is continuous. For any $u_1 \in W^{2,2}(\Omega) \hookrightarrow L^\infty(\Omega)$, $w \in W^{2,2}(\Omega)$, we have
\begin{equation}
\begin{aligned}
&\Vert D_{22}(d,v,w)[u_1] \Vert_{W^{-2,2}(\Omega)}=\sup_{\Vert u_2 \Vert_{W^{2,2}(\Omega)}\leqslant 1} \left| \int_\Omega D_{22}(d,v,w)[u_1]\cdot u_2 \d z \right|\\
&\leqslant
 \sup_{\Vert u_2 \Vert_{W^{2,2}(\Omega)}\leqslant 1}2\lambda_1\left|\int_\Omega D_{zz} u_1 D_{zz} u_2 \d z \right|\\
 &\quad+ \sup_{\Vert u_2 \Vert_{W^{2,2}(\Omega)}\leqslant 1}2\lambda_2\left|\int_\Omega \sin^4 v D_{zz} u_1 D_{zz} u_2 \d z \right|\\
 &\quad+\left(d+3f\Vert w^2\Vert_{L^\infty(\Omega)}+4\lambda_1 q^2+2\lambda_1 q^4+2\lambda_2 q^4\right)\Vert u_1 \Vert_{L^2(\Omega)}\\
 & \quad+ \sup_{\Vert u_2 \Vert_{W^{2,2}(\Omega)}\leqslant 1} 2\lambda_2 q^2 \|D_{zz}u_1\|_{L^2(\Omega)}\|u_2\|_{L^2(\Omega)} + 2\lambda_2 q^2 \|u_1\|_{L^2(\Omega)} \|D_{zz}u_2\|_{L^2(\Omega)}  \\
 &\leqslant \left((2+4q^2+2q^4)(\lambda_1+\lambda_2)+d+3f\Vert w^2\Vert_{L^\infty(\Omega)}\right)\Vert u_1 \Vert_{W^{2,2}(\Omega)}
\end{aligned}
\end{equation}
which indicates that $D_{22}$ is continuous. Similarly, $D_{12}$ and $D_{21}$ are also continuous.

In order to check that $\mathcal{F}$ satisfies assumption (c), one can calculate the kernel space of 
\begin{equation}\label{eq: dvw000}
\begin{aligned}
&D_{(v,w)}\mathcal{F}(-2\lambda_2 q^4 \cos^4 \theta_0 ,0,0)\\&=(-2k_1D_{zz}+2k_1\sigma^2,0;0,-2\lambda_2 q^4 \cos^4 \theta_0+2\lambda_1(D_{zz}+q^2)^2+2\lambda_2 q^4 \cos^4 \theta_0 )
\\
&=(-2k_1D_{zz}+2k_1\sigma^2,0;0,\lambda_1(D_{zz}+q^2)^2)
\end{aligned}
\end{equation}
in $W_\theta \times V$, the spectrum of $-2k_1D_{zz}+2k_1\sigma^2$ belongs to $(0,\infty)$, and
\begin{equation}
\text{dim}\left(\text{Kernel}\left(2\lambda_1(D_{zz}+q^2)^2\right)\right)=\text{dim}\left(\{w=k \sin(q z),~k \in \mathbb{R}\}\right)=1.
\end{equation}
Hence, 
\begin{equation}
\textrm{dim}\left(\textrm{Kernel}\left(D_{(v,w)}\mathcal{F}(-2\lambda_2 q^4 \cos^4 \theta_0,0,0)\right)\right)=\text{dim}\left(\text{Kernel}\left(2\lambda_1(D_{zz}+q^2)^2\right)\right)=1.
\end{equation}
Since we have periodic boundary condition for $\dr$ and Neumann boundary condition for $\theta$, it is direct to see that $D_{(v,w)}\mathcal{F}(-2\lambda_2 q^4 \cos^4 \theta_0,0,0)$ defined in \eqref{eq: dvw000} is a self-adjoint operator, and hence it is a Fredholm operator of index $0$ \cite{ize1976bifurcation}. We have 
\begin{equation}
   \begin{aligned} &\text{dim}\left(\frac{W^{-1,2}(\Omega)\times W^{-2,2}(\Omega)}{\text{Range}(D_{(v,w)}\mathcal{F}(-2\lambda_2 q^4 \cos^4 \theta_0,0,0))}\right)\\
   &=\text{dim}\left(\text{Kernel}\left(D_{(v,w)}\mathcal{F}(-2\lambda_2 q^4 \cos^4 \theta_0,0,0)\right)\right)=1,
   \end{aligned}
\end{equation}
which satisfies the assumption (c).
We now proceed to checking the last assumption (d)
\begin{equation*} 
    D_{d,(v,w)}\mathcal{F}[(0,\sin(q z)]=(0,\sin(q z))\notin \text{range} D_{(v,w)}\mathcal{F}(0,0,0),
\end{equation*}
i.e., the following differential equation
\begin{equation}
2\lambda_1(D_{zz}+q^2)^2 w = \sin(q z)
\label{assumption d}
\end{equation}
does not have a solution that satisfies the periodic boundary condition. One can check that the general solution of \eqref{assumption d} is
\begin{equation}
    w=-\frac{x^2\sin(q z)}{16\lambda_1 q^2}+k \sin(q z),~k \in \mathbb{R},
\end{equation}
and it cannot satisfy the periodic boundary conditions with any $k\in \mathbb{R}$. All the assumptions of the Crandall-Rabinowitz theorem are now fulfilled, and thus the proposition follows directly from \cite{crandall1971bifurcation}.
\end{proof}

\begin{figure}
\centering
\includegraphics[width=.9\textwidth]{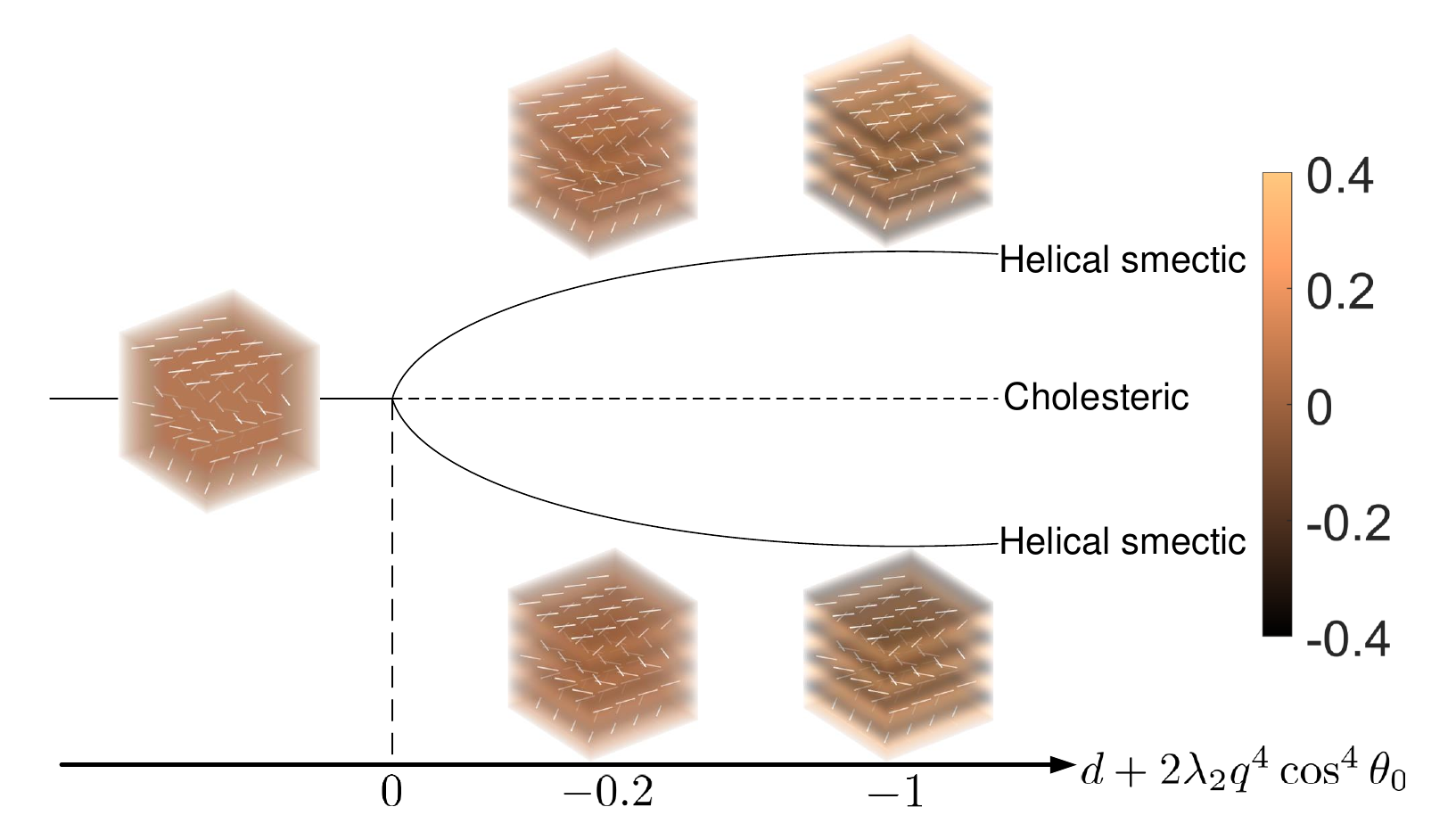}
\caption{Schematic illustration of the  
the Cholesteric-Helical Smectic phase transition with $d=T+10$, $e=0$, $f=10$, $k_1=k_2=k_3=k=0.025$, $h=2\pi$, $\sigma=q=4$, $\lambda_1=\lambda_2=0.001$, $\theta_0=\pi/9$, and the pitchfork bifurcation for $d+2\lambda_2 q^4 \cos^4\theta_0<0$. The solid black line denotes a stable phase, while the dashed black line denotes an unstable phase in all figures. We numerically calculate the minimizer ($\dr_{min}$,$\theta_{min}$) of \eqref{energy theta final} with various $d$. For better visualization, we plot the 3D $xy$-invariants: $\Tilde{\theta}(x,y,z)\equiv \theta(z)$ and $\Tilde{\dr}(x,y,z) \equiv \dr(z)$.}
\label{C-HS transition}
\end{figure}

In Figure~\ref{C-HS transition}, we numerically calculate the Cholesteric-Helical Smectic bifurcation, accomplished using the sine spectral method for $\dr$ and the cosine spectral method for $\theta$ \cite{shen2011spectral}. For $d+2\lambda_2 q^4 \cos^4\theta_0>0$, the minimum eigenvalue at the cholesteric phase, calculated both numerically and analytically, is both simple and positive, indicating the stability. When $d+2\lambda_2 q^4 \cos^4 \theta_0=0$, a simple zero eigenvalue emerges with the eigenvector $\eta=\sin(q z)$.
As $d+2\lambda_2 q^4 \cos^4 \theta_0$ becomes negative, the cholesteric phase loses its stability and bifurcates into two helical smectic phases, corresponding to $\dr=t\sin(q z)+t^2 w_t$ and $\dr=-t\sin(q z)-t^2 w_t$ respectively, in the pitchfork bifurcation. With Proposition \ref{bifurcation proof}, we can directly have the following symmetry-breaking transition theorem between cholesteric and helical smectic phases.

\begin{theorem}[Cholesteric-Helical Smectic phase transition]
Let $k_1=k_2=k_3, \lambda_1, \lambda_2, f, 0<\theta_0<\pi/2$ be positive constants, and let $q=\frac{2\pi n_0}{h}$ for a fixed positive integer $n_0$, where $n_0=1,2,3,\cdots$. As the temperature decreases, the energy functional \eqref{energy theta final} exhibits a cholesteric-smectic phase transition at $d=-2\lambda_2 q^4 \cos^4 \theta_0$, i.e., the cholesteric phase (with zero $\dr$) is stable for $d\geqslant-2\lambda_2 q^4 \cos^4 \theta_0$, but loses stability when $d<-2\lambda_2 q^4 \cos^4 \theta_0$ and the helical smectic phase with a non-zero $\dr$ is stable.
\end{theorem}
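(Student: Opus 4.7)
The plan is to assemble the statement directly from the two main technical results proved just above, namely Proposition \ref{cholesteric loses stability} (linear stability analysis of the cholesteric branch) and Proposition \ref{bifurcation proof} (pitchfork bifurcation at the critical temperature), and then establish the stability of the emerging smectic branch by an exchange-of-stability argument.

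First I would dispose of the stable regime $d\geqslant -2\lambda_2 q^4 \cos^4\theta_0$. Here there is nothing new to do: Proposition \ref{cholesteric loses stability} shows that the second variation \eqref{second variation} at $(\theta\equiv 0,\dr\equiv 0)$ is strictly positive on the admissible test space, since the perturbation $\eta_2\equiv 1$ is excluded by the mean-zero/periodic constraint built into $V$ (alternatively, one notes that the only candidate for a vanishing eigenvalue, $\cos(qz)$, is also excluded from $V$), so the cholesteric configuration is a locally stable critical point of \eqref{energy theta final}. Next, in the regime $d<-2\lambda_2 q^4\cos^4\theta_0$, Proposition \ref{cholesteric loses stability} produces an explicit unstable direction $\eta_2=\sin(qz)\in V$ with strictly negative second variation, so the cholesteric phase is no longer a local minimizer.

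For the appearance and stability of the helical smectic branch at the transition, I would invoke Proposition \ref{bifurcation proof} to obtain the smooth one-parameter family of nontrivial solutions $(\theta\equiv 0,\dr=\pm(t\sin(qz)+t^2 w_t))$ in $W_\theta\times V$, parametrized by $t\in(-\delta,\delta)$, together with the monotone relation $t\mapsto d(t)$ in \eqref{eq: relation t and d}. To show that this bifurcating branch is stable for $d<-2\lambda_2 q^4\cos^4\theta_0$ (and hence that the transition is a genuine symmetry-breaking one), I would apply the standard principle of exchange of stability for the Crandall–Rabinowitz pitchfork: since the linearization $D_{(v,w)}\mathcal F(-2\lambda_2 q^4\cos^4\theta_0,0,0)$ in \eqref{eq: dvw000} is self-adjoint with a simple zero eigenvalue whose eigenvector is $\sin(qz)$, and since $D_{d,(v,w)}\mathcal F$ in \eqref{differential operators} acts as the identity on this eigendirection, the transversality condition yields that the perturbed eigenvalue along the cholesteric branch crosses zero with non-zero speed as $d$ decreases through $-2\lambda_2 q^4\cos^4\theta_0$. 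The exchange-of-stability theorem (see e.g.\ the standard companion to \cite{crandall1971bifurcation}) then transfers the lost eigenvalue to the bifurcating branch with the opposite sign: the smectic solutions $\dr=\pm(t\sin(qz)+t^2 w_t)$ inherit a strictly positive second variation on $W_\theta\times V$ for small $|t|>0$, i.e.\ for $d$ slightly below the critical value.

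The main obstacle will be this last step, the stability transfer. The difficulty is twofold. First, one must verify that the bifurcation is supercritical in the appropriate sense, which amounts to computing the leading-order coefficient of $d(t)+2\lambda_2 q^4\cos^4\theta_0$ as a function of $t$ and checking its sign; using the quartic $f$-term in $f_{bs}$ together with the coupling contribution $\lambda_2(\sin^2\theta\,\dr_{zz}+q^2\dr\cos^2\theta_0)^2$ evaluated along the bifurcating profile, this reduces to showing that the effective Landau coefficient in $t$ is positive, so that $d(t)$ is monotonically decreasing as asserted in \eqref{eq: relation t and d}. Second, because the linearization acts on the product space $W_\theta\times V$, one should confirm that the $\theta$-block $-2k_1 D_{zz}+2k_1\sigma^2$ in \eqref{eq: dvw000} retains its strictly positive spectrum under the smectic perturbation (this follows from the block-diagonal structure at $v=0$ together with smooth dependence), so the only relevant eigenvalue for stability is indeed the one that crosses zero along $\sin(qz)$. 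Once these two points are settled, combining the local statements yields the theorem and matches the numerical picture in Figure \ref{C-HS transition}.
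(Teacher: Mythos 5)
Your proposal is correct and follows essentially the same route as the paper: there the theorem is obtained as a direct corollary of Proposition \ref{cholesteric loses stability} (loss of stability of the cholesteric branch at $d=-2\lambda_2 q^4\cos^4\theta_0$) and Proposition \ref{bifurcation proof} (supercritical pitchfork with $d(t)$ monotonically decreasing), exactly as you assemble it. The only difference is that you spell out the exchange-of-stability/supercriticality step for the bifurcating smectic branch, which the paper leaves implicit (it asserts the theorem follows ``directly'' from the pitchfork bifurcation and supports the stability of the smectic branch numerically in Figure \ref{C-HS transition}), so your added detail is a refinement of the same argument rather than a different approach.
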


In the above discussion of cholesteric nematic to helical smectic phase transitions, it is observed that near the critical temperature, the helical smectic phase exhibits a zero out-of-plane director component (see the
analytical solution in \eqref{analytical configuration near phase transition temperature} and the numerical result in Figure \ref{C-HS transition}, where $\theta \equiv 0$ near the bifurcation point). In the following, we show that as the layer structure becomes increasingly pronounced, the director gradually twists along a conical surface, i.e., a Helical Smectic-Smectic C* phase transition occurs as the temperature further decreases. 

\begin{proposition}[Existence of the helical smectic phase]
Let $k_1=k_2=k_3$, $\lambda_1, \lambda_2, f, 0<\theta_0<\pi/2$ be positive constants, and let $q=\frac{2\pi n_0}{h}$ for a fixed positive integer $n_0$, where $n_0=1,2,3,\cdots$. The helical smectic phase ($\dr \not\equiv 0,\theta \equiv 0$) exists as a weak solution of \eqref{E-L one D} when $d<d_0=-2\lambda_2 q^4 \cos^4 \theta_0$ (or $T<T_2^*+\frac{d_0}{\alpha_2}$).
\end{proposition}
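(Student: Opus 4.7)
\smallskip

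\noindent\textbf{Proof proposal.} The plan is to reduce the coupled system to a single fourth-order ODE for $\dr$ by inserting the ansatz $\theta\equiv 0$ into the Euler--Lagrange system \eqref{E-L one D}, and then establish existence of a non-trivial minimizer for the resulting scalar problem via the direct method. First I would observe that when $\theta\equiv 0$ the first equation in \eqref{E-L one D} is satisfied automatically, because both $\tan\theta$ and $\sin(2\theta)$ vanish, so no information is lost by fixing $\theta$. Substituting $\sin\theta=0,\ \cos\theta=1$ into the second equation in \eqref{E-L one D} reduces it to
\begin{equation*}
2\lambda_1\bigl(\partial_{zz}+q^{2}\bigr)^{2}\dr+\bigl(d-d_0\bigr)\dr+f\dr^{3}=0,\qquad d_0:=-2\lambda_2 q^{4}\cos^{4}\theta_0,
\end{equation*}
which is exactly the Euler--Lagrange equation for the reduced functional
\begin{equation*}
\tilde F(\dr):=\int_{0}^{h}\Bigl\{\lambda_1\bigl(\dr_{zz}+q^{2}\dr\bigr)^{2}+\tfrac{1}{2}(d-d_0)\dr^{2}+\tfrac{f}{4}\dr^{4}\Bigr\}\,\d z
\end{equation*}
on the periodic admissible space $W_{\dr}$ defined in \eqref{admissible space of theta and dr}.

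Next I would apply the direct method to $\tilde F$ on $W_{\dr}$. For coercivity, the only possibly negative contribution is the quadratic term when $d<d_0$; Young's inequality yields $\tfrac{|d-d_0|}{2}\dr^{2}\leqslant\tfrac{f}{8}\dr^{4}+C(d,f)$, so that
\begin{equation*}
\tilde F(\dr)\geqslant\int_{0}^{h}\Bigl\{\lambda_1\bigl(\dr_{zz}+q^{2}\dr\bigr)^{2}+\tfrac{f}{8}\dr^{4}\Bigr\}\,\d z-C(d,f)\,h,
\end{equation*}
which bounds $\|\dr\|_{L^{4}(0,h)}$ and hence $\|\dr\|_{L^{2}(0,h)}$; combining with the $L^{2}$-bound on $\dr_{zz}+q^{2}\dr$ (triangle inequality) and Lemma~2 applied with $p=2$ controls $\|\dr\|_{W^{2,2}(0,h)}$. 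Weak lower semicontinuity of $\tilde F$ on $W_{\dr}$ follows as in Lemma~\ref{lemma: wsls}: the convex quadratic form in $\dr_{zz}$ is w.l.s.c., while the lower-order terms are continuous under weak $W^{2,2}$ convergence thanks to the compact embedding $W^{2,2}(0,h)\hookrightarrow C(0,h)$. The direct method then furnishes a minimizer $\dr_*\in W_{\dr}$, which is a weak solution of the reduced equation above, and $(\theta\equiv 0,\dr_*)$ is therefore a weak solution of the full system \eqref{E-L one D}.

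Finally, to ensure $\dr_*\not\equiv 0$ when $d<d_0$, I would use the test function $\dr_t(z):=t\sin(q z)$, which is admissible since $q=2\pi n_0/h$ makes it periodic of period $h$ together with its derivatives. The key observation is that $(\dr_t)_{zz}+q^{2}\dr_t\equiv 0$, so the fourth-order term in $\tilde F$ disappears, leaving
\begin{equation*}
\tilde F(\dr_t)=\tfrac{h}{4}(d-d_0)\,t^{2}+\tfrac{3fh}{32}\,t^{4},
\end{equation*}
which is strictly negative for sufficiently small $t>0$ whenever $d<d_0$. Since $\tilde F(0)=0$, the global minimizer $\dr_*$ must be non-trivial, giving the required helical smectic weak solution. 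The only mildly technical step is the coercivity estimate with an indefinite quadratic part; this is handled cleanly by the Young inequality absorption above, while the vanishing of the $\lambda_1$ penalty on the cosine mode $\sin(qz)$ is precisely what drives the instability and supplies both the trial function and the physical interpretation of the bifurcation.
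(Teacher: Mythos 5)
Your proposal is correct and follows essentially the same route as the paper: fix $\theta\equiv 0$ (which trivially satisfies the first Euler--Lagrange equation), minimize the reduced functional for $\dr$ (your $\tilde F$ is exactly the paper's $F_2$ with the $\lambda_2$ term absorbed into the quadratic coefficient) by the direct method, and force non-triviality with the test function $t\sin(qz)$, whose energy $\tfrac{h}{4}(d-d_0)t^2+\tfrac{3fh}{32}t^4$ matches the paper's computation. The only cosmetic remark is that invoking the paper's $W^{2,p}$ boundary estimate (stated for a Dirichlet problem on a $C^{1,1}$ domain) is unnecessary here: in the periodic one-dimensional setting the bound $\|\dr_z\|_{L^2}^2\leqslant \|\dr\|_{L^2}\|\dr_{zz}\|_{L^2}$ already gives the full $W^{2,2}$ control.
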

\begin{proof}
We can directly show that ($\theta \equiv 0$) is always a solution of the Euler--Lagrange equation in \eqref{E-L one D}. With a similar argument as in Proposition \ref{existence proof}, one can prove that the following functional
\begin{equation}
F_2(\dr)=\int_0^h  \left\{\frac{d}{2}(\dr)^2+\frac{f}{4}(\dr)^4\\
+ \lambda_1\left(\dr_{zz} + q^2\delta\rho\right)^2
 + \lambda_2 \left(q^2\delta\rho\cos^2\theta_0\right)^2\right\},
\end{equation}
admits a global minimizer $\dr^*$. Then, $(\dr^*,\theta \equiv 0)$ is a weak solution of \eqref{E-L one D}.

If $d+2\lambda_2 q^4 \cos^4 \theta_0<0$, we have $\dr^*\not\equiv 0$ since
\begin{equation*}
F_2(\dr^*)\leqslant F_2(\epsilon \sin(q z))=\frac{(d+2\lambda_2 q^4 \cos^4 \theta_0)\epsilon^2h}{4}+\frac{3f\epsilon^4h}{32}<F_2(\dr\equiv 0)=0
\end{equation*}
for $0<\epsilon<\sqrt{-\frac{8(d+2\lambda_2 q^4 \cos^4 \theta_0)}{3f}}$.
\end{proof}

From \eqref{eq: relation t and d}, $t$ increases as $d$ (or the temperature) decreases. This indicates that the layer structure becomes more pronounced at lower temperatures. Therefore, analyzing the system under decreasing temperature is qualitatively equivalent to studying phase transitions as $t$ increases.
By \eqref{analytical configuration near phase transition temperature} in Proposition \ref{bifurcation proof}, the leading term of the layer structure is $t\sin q z$. In the next proposition, with the assumption $\dr=t\sin qz$, we show that the helical smectic phase $(\dr\not\equiv 0,\theta \equiv 0)$ loses stability with respect to the smectic C* phase $(\dr\not\equiv 0,\theta \not\equiv 0)$, as the temperature further decreases.

\begin{proposition}[Helical Smectic-Smectic C* phase transition]\label{twist plane changes}
Let $k_1=k_2=k_3, \lambda_1, \lambda_2, f, 0<\theta_0<\pi/2$ be positive constants, and let $q=\frac{2\pi n_0}{h}$ for a fixed positive integer $n_0$, where $n_0=1,2,3,\cdots$. Assume that $\dr=t \sin qz, t\geqslant 0$ (i.e., neglect second-order terms in \eqref{analytical configuration near phase transition temperature}), then $\theta \equiv 0$ is a critical point of the energy \eqref{energy theta final} for any $t\geqslant 0$. As $t$ increases (or temperature decreases), $\theta \equiv 0$ (or the helical smectic phase) is stable for $t< t_1=\sqrt{\frac{k_1 \sigma^2}{2\lambda_2 q^4\cos^2 \theta_0}}$ but unstable for $t> t_2=\sqrt{\frac{2q h k_1 \sigma^2}{(2q h-\sin(2q h))\lambda_2 q^4\cos^2 \theta_0}}$. Assuming a constant value of $\theta$ (or constant out-of-plane director component), the optimal twist plane is 
\begin{equation}
\theta^* \equiv \arcsin{\sqrt{\max\left(\cos^2\theta_0-\frac{k_1h\sigma^2}{\lambda_2t^2q^4\left(h-\frac{\sin(2q h)}{2q}\right)},0\right)}}.
\end{equation}
\end{proposition}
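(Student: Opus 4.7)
The plan is to substitute the ansatz $\delta\rho=t\sin(qz)$ (so that $\delta\rho_{zz}=-tq^2\sin(qz)$) together with $k_1=k_2=k_3=:k$ into the functional \eqref{energy theta final}. The rotational term then collapses to $-k\sigma^2\cos^2\theta$ and the coupling term becomes $\lambda_2 t^2 q^4\sin^2(qz)\,(\cos^2\theta_0-\sin^2\theta)^2$, so that
\begin{equation*}
F(\theta)=\int_0^h\!\bigl\{k\theta_z^2-k\sigma^2\cos^2\theta+\lambda_2 t^2 q^4\sin^2(qz)(\cos^2\theta_0-\sin^2\theta)^2\bigr\}\,\mathrm{d}z
\end{equation*}
up to a $\theta$-independent constant. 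Every term in the resulting Euler--Lagrange equation carries a factor of $\theta_z$ or $\sin(2\theta)$, so the profile $\theta\equiv 0$ is automatically a critical point for every $t\geq 0$, which settles the first assertion.

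For the two thresholds I would pass to the second variation at $\theta\equiv 0$. Using $g''(0)=-4\cos^2\theta_0$ for $g(\theta)=(\cos^2\theta_0-\sin^2\theta)^2$ and $\tfrac{\mathrm{d}^2}{\mathrm{d}\theta^2}\cos^2\theta\big|_{0}=-2$, the quadratic form collapses to
\begin{equation*}
\tfrac{1}{2}\,\delta^2 F(\eta)=\int_0^h\!\bigl\{k\eta_z^2+k\sigma^2\eta^2-2\lambda_2 t^2 q^4\cos^2\theta_0\sin^2(qz)\,\eta^2\bigr\}\,\mathrm{d}z,
\end{equation*}
acting on perturbations $\eta$ with Neumann data. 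For $t<t_1$ the pointwise bound $\sin^2(qz)\leq 1$ forces the integrand to be nonnegative and strictly positive whenever $\eta\not\equiv 0$, which gives stability. For $t>t_2$ I plug in the constant test function $\eta\equiv 1$ (admissible since $\eta_z(0)=\eta_z(h)=0$); using $\int_0^h\sin^2(qz)\,\mathrm{d}z=(2qh-\sin 2qh)/(4q)$, one gets $\tfrac{1}{2}\,\delta^2 F(1)=k\sigma^2 h-\lambda_2 t^2 q^4\cos^2\theta_0(2qh-\sin 2qh)/(2q)$, which is strictly negative precisely when $t>t_2$.

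The optimal constant out-of-plane angle is found by restricting to spatially constant $\theta$, where $\theta_z\equiv 0$. Setting $u:=\sin^2\theta\in[0,1]$, the energy becomes a strictly convex quadratic in $u$,
\begin{equation*}
F(u)=\mathrm{const}+k\sigma^2 h\,u+\lambda_2 t^2 q^4(\cos^2\theta_0-u)^2\,\frac{2qh-\sin 2qh}{4q},
\end{equation*}
whose unconstrained minimizer is $u^\ast=\cos^2\theta_0-k_1 h\sigma^2/\bigl[\lambda_2 t^2 q^4\bigl(h-\sin(2qh)/(2q)\bigr)\bigr]$. Since $u^\ast\leq\cos^2\theta_0\leq 1$ automatically, only the lower constraint $u\geq 0$ can bind, and by convexity the constrained minimizer is $\max(u^\ast,0)$, yielding the stated formula $\theta^\ast=\arcsin\sqrt{\max(u^\ast,0)}$.

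The main obstacle to sharpening the result is the gap between $t_1$ and $t_2$: the pointwise upper bound $\sin^2(qz)\leq 1$ used for stability and the single constant test function used for instability are both one-sided estimates, and they coincide only if $\sin^2(qz)\equiv 1$, which fails here. Under the hypothesis $q=2\pi n_0/h$ one has $\sin 2qh=0$ and the two thresholds differ by a factor of $\sqrt{2}$. Closing this interval would require a full spectral analysis of the Hill-type operator $-k\partial_z^2+k\sigma^2-2\lambda_2 t^2 q^4\cos^2\theta_0\sin^2(qz)$ on $[0,h]$ with Neumann boundary conditions, which the proposition sensibly avoids.
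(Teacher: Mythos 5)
Your proposal is correct and follows essentially the same route as the paper's proof: the same reduced functional after substituting $\delta\rho=t\sin(qz)$, the same second variation at $\theta\equiv 0$ with the pointwise bound $\sin^2(qz)\leqslant 1$ for stability below $t_1$ and the constant test function for instability above $t_2$, and the same constant-$\theta$ minimization for $\theta^*$. Your added remarks (convexity in $u=\sin^2\theta$, the $\sqrt{2}$ gap between $t_1$ and $t_2$ when $\sin(2qh)=0$) are accurate refinements of the paper's argument rather than a different method.
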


\begin{proof}
By substituting $\dr=t\sin q z$ into \eqref{energy theta final}, we have the following free energy functional
\begin{equation*}
F(\theta)=\int_0^h\left\{ k_1 \theta_z^2-k_1 \sigma^2 \cos^2 \theta+ \lambda_2 t^2 q^4 \sin ^2 (q z) \left(\cos^2\theta_0-\sin^2 \theta \right)^2\right\} + C.
\end{equation*}
and the Euler-Lagrange equation is 
\begin{equation*}
k_1 \theta_{zz}= k_1 \sigma^2 \sin (2\theta)/2-\lambda_2 t^2 q^4 \sin^2 (q z)(\cos^2 \theta_0-\sin^2 \theta)\sin (2\theta).
\end{equation*}
The helical configuration $\theta \equiv 0$ satisfies the Euler-Lagrange equation for any $t\geqslant 0$. The second variation at $\theta \equiv 0$ is 
\begin{equation}
\begin{aligned}
\delta ^2 F(\bar{\theta})&=2\int_0 ^h \left\{k_1 \bar{\theta}_z^2+k_1 \sigma^2 \bar{\theta}^2-2\lambda_2 t^2 q^4 \sin^2(q z) \cos^2 \theta_0 \cdot \bar{\theta}^2\right\}\d z\\
&\geqslant 2\int_0 ^h \left\{k_1 \bar{\theta}_z^2+k_1 \sigma^2 \bar{\theta}^2-2\lambda_2 t^2 q^4 \bar{\theta}^2 \cos^2 \theta_0 \right\}\d z.
\end{aligned}
\end{equation}
Thus, if $t<t_1=\sqrt{\frac{k_1 \sigma^2}{2\lambda_2 q^4 \cos^2 \theta_0}}$, then $\delta ^2 F(\bar{\theta})\geqslant C\Vert \bar{\theta} \Vert_{W^{1,2}(\Omega)}^2$, which means that the helical configuration $\theta \equiv 0$ is stable.
If $t\geqslant t_2=\sqrt{\frac{2q h k_1 \sigma^2}{(2q h-\sin(2q h))\lambda_2 q^4\cos^2 \theta_0 }}$,
\begin{equation}
\inf_{\bar{\theta} \in W_\theta}\frac{\delta ^2 F(\bar{\theta})}{\int_0^h \bar{\theta}^2 \mathrm{d}z}\leqslant \frac{\delta ^2 F(\bar{\theta}\equiv 1)}{h}=2\left(k_1\sigma^2-\left(1-\frac{\sin(2 q h)}{2 q h}\right)\lambda_2 t^2 q^4 \cos^2 \theta_0\right)<0,
\label{eigenvalue_2}
\end{equation}
which means the helical configuration is unstable.

Especially, if $\theta$ is a constant function, then 
\begin{equation}
\begin{aligned}
F(\theta)&=-k_1 h \sigma^2 \cos^2 \theta+ \lambda_2 t^2 q^4 \left(\frac{h}{2}-\frac{\sin(2q h)}{4q}\right)\left(\cos^2 \theta_0-\sin^2 \theta \right)^2 + C,
\end{aligned}
\end{equation}
and 
\begin{equation}\label{optimal twist plane}
\theta^*=\argmin_\theta F(\theta)=\arcsin{\sqrt{\max\left(\cos^2\theta_0-\frac{k_1h\sigma^2}{\lambda_2 t^2 q^4\left(h-\frac{\sin(2q h)}{2q}\right)},0\right)}}.
\end{equation}
\end{proof}

\begin{remark}
The tilt angle results from the competition between nematic and smectic energies. The nematic energy favors a helical configuration that twists in-plane with $\theta=0$, whereas the smectic energy favors a configuration that twists on a conical surface. In \ref{helical configuration limit}, we prove that the director tends toward a helical configuration defined by Lemma \ref{configuration satisfies pde for helical configuration}, consistent with the optimal twist plane in \eqref{optimal twist plane}, as $k_1 = k_2=k_3=k \to \infty$ ($\theta^* \equiv 0$ as $k_1\to\infty$ in \eqref{optimal twist plane}). Proposition \ref{twist plane changes} also indicates that although the layer structure encourages the out-of-plane director, the director does not immediately point out of plane upon the appearance of the smectic structure (see Figure \ref{fig: optimal twist plane}). Actually, \(\theta \equiv 0\) remains stable even when the layer structure is present but not pronounced $(t<t_1)$. Thus, this model captures the phase transition from Cholesteric (in-plane twist, no layer structure) to Helical Smectic (in-plane twist, with layer structure) and finally to Smectic C* (conical surface twist, with layer structure), as the temperature decreases.
\end{remark}
\begin{figure}
\centering
\includegraphics[width=.85\textwidth]{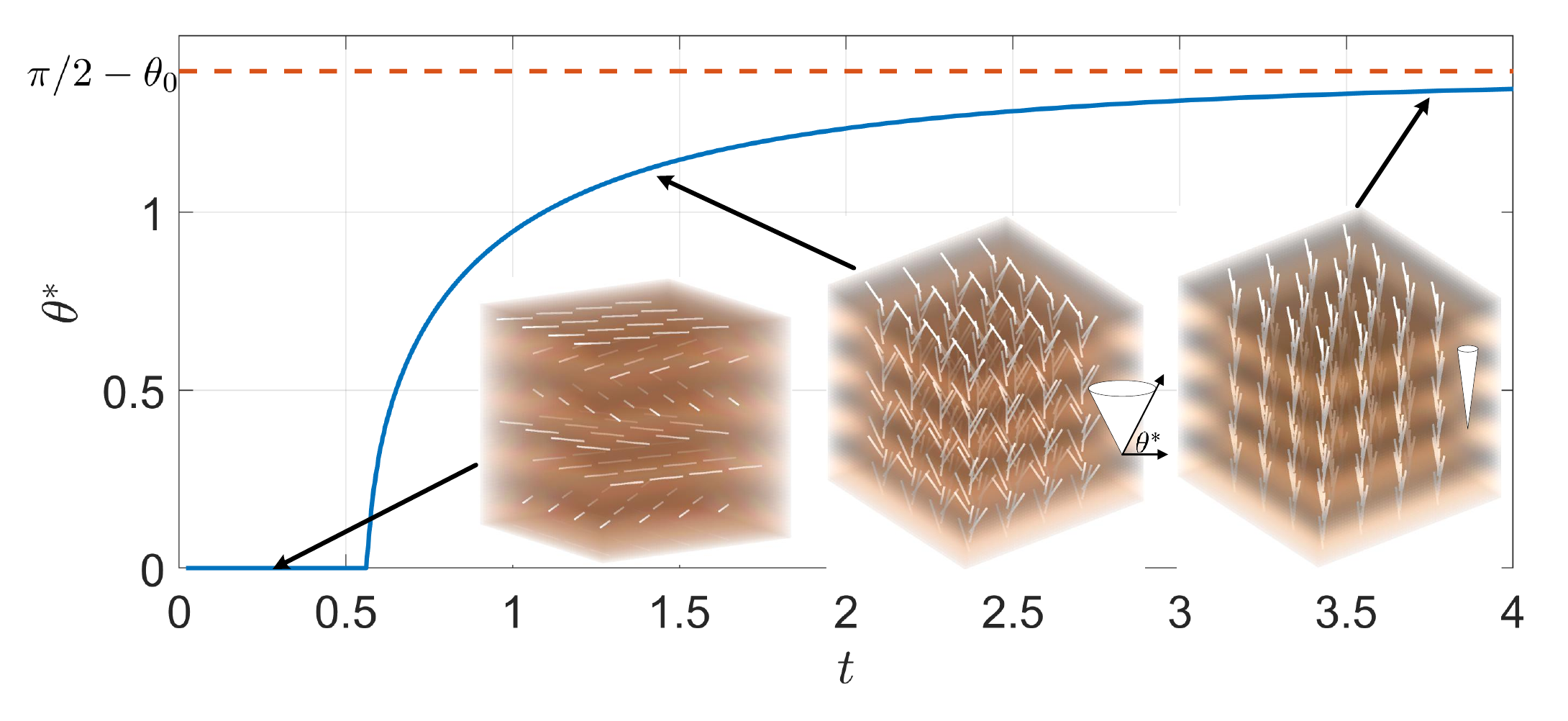}
\caption{$\theta^*$ versus $t$ in \eqref{optimal twist plane} which represents the Helical smectic-Smectic C* phase transition with increasing $t$ (decreasing temperature). The director (represented by the white line segments) twists within the conical plane determined by $\theta^*$
, while the layer structure is visualized by $tsin(qz)$. Therefore, the larger the value of $t$, the more pronounced the layer structure becomes. 
}
\label{fig: optimal twist plane}
\end{figure}

\section{Numerical results}\label{sec: numerical result}
\label{sec:num}
In this section, we perform numerical experiments to validate our theoretical results and understand the symmetry-breaking transitions from the cholesteric phase to the helical smectic phase and to the Smectic-C* phase. We study the phase transition problem with boundary conditions in \eqref{admissible space of theta and dr}, thus, we use the cosine and sine spectral method \cite{shen2011spectral} for spatial discretization,
\begin{equation}
    \theta(z)=
        \sum_{k=0}^{N+1} \Tilde{u}_k \cos\left(2k\pi x/h \right),
        \dr(z)=
        \sum_{k=1}^{N+1} \Tilde{u}_k \sin\left(2k\pi x/h \right), 
    \label{spectral method for u}
\end{equation}
where $N$ is an even integer and we choose $N=64$. Recall that $W_\dr \subset W^{1,2}_{0,\Omega}$, so we use the sine spectral method to discretize $\dr \in W_\dr$. By substituting \eqref{spectral method for u} into \eqref{energy theta final}, we obtain a discretized form of the energy,
\begin{equation*}
    F(\theta_k,\dr_k)\approx F(\theta,\dr).
\end{equation*}
This results in a function of $2N+3$ variables and we directly search for the minimum by using the gradient descent method for finite-dimensional functions, $F(\theta_k,\dr_k)$.

\begin{figure}
\centering
\includegraphics[width=.9\textwidth]{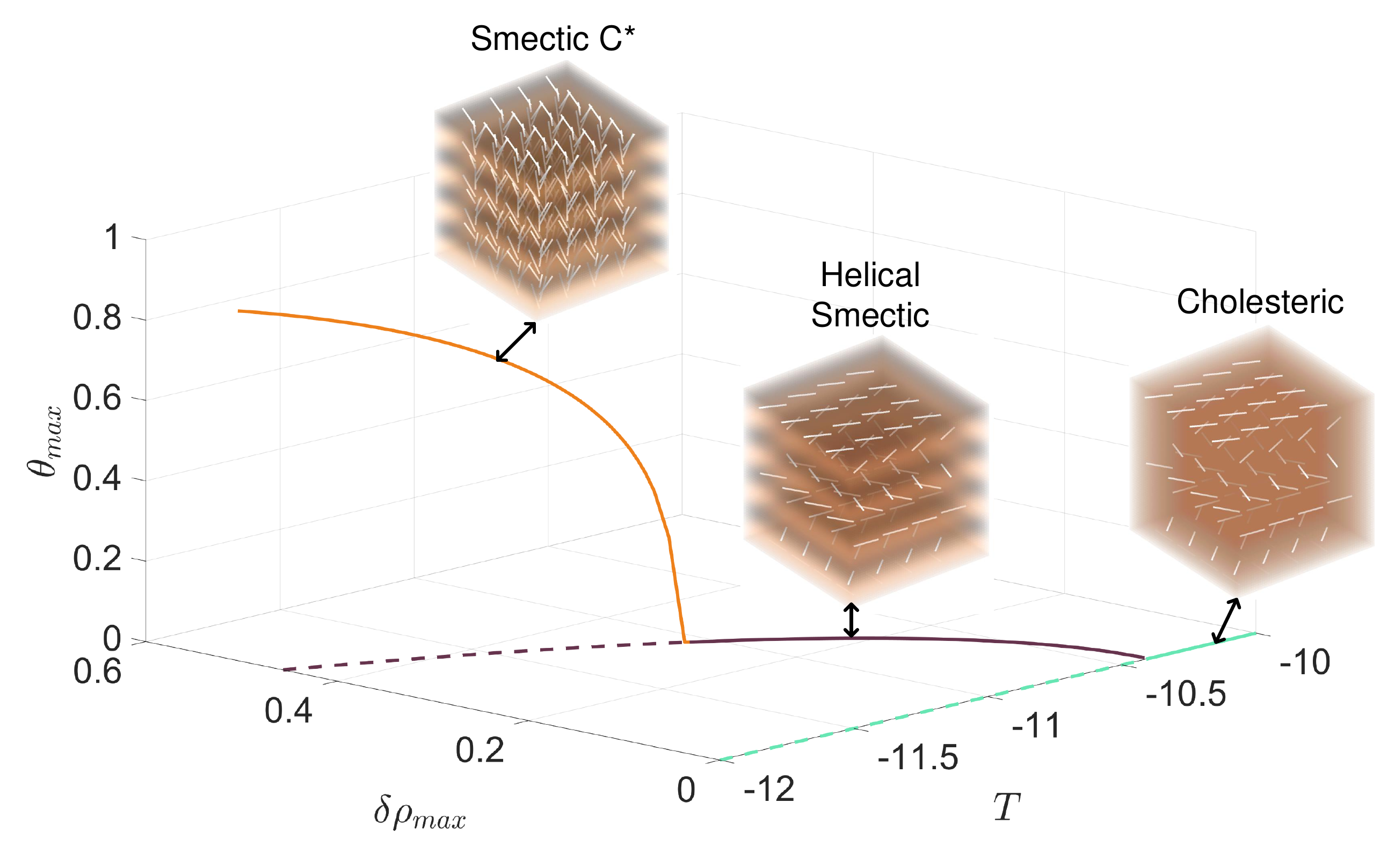}
\caption{Cholesteric--Helical Smectic--Smectic C* phase transitions of the energy functional \eqref{energy theta final} for $T_2^*=-10$, $\alpha_2=1$ (i.e. $d=T+10$), $f=10$, $h=2\pi$, $q=\sigma=4$, $\lambda_1=\lambda_2=0.001$, $\theta_0=\pi/9$. We use $\dr_{max}(T)$ and $\theta_{max}(T)$, where $\dr_{max}(T)$ = $\max_{0\leqslant x \leqslant h} \dr^*_T(x)$ and $\theta_{max}(T)$ = $\max_{0\leqslant x \leqslant h} \theta^*_T(x)$, to represent the global minimizer $(\dr^*_T(x),\theta^*_T(x))$ of $F(\theta,\dr)$ in \eqref{energy theta final} at $T$. For better visualisation, we plot the 3D $xy$ invariants: $\Tilde{\theta}(x,y,z)\equiv \theta(z)$ and $\Tilde{\dr}(x,y,z) \equiv \dr(z)$.}
\label{Phase transition as temperature}
\end{figure}

The numerically calculated bifurcation diagram of the Cholesteric-Helical Smectic-Smectic C* phase transition versus temperature $T$ is shown in Figure \ref{Phase transition as temperature}. The cholesteric phase with $\dr \equiv 0$ and $\theta \equiv 0$ is always a critical point of \eqref{energy theta final}. When $T\geqslant T_{C-HS}^*\approx -10.4$, the cholesteric phase is a global minimizer of \eqref{energy theta final} (Proposition \ref{cholesteric loses stability}). For $T_{C-HS}^* > T \geqslant T_{HS-SC}^*\approx -11.06$, the cholesteric phase loses stability (Proposition \ref{cholesteric loses stability}), and bifurcates into the stable helical smectic phase with $\dr \not\equiv 0$ and $\theta \equiv 0$ (Proposition \ref{bifurcation proof}). For $T< T_{HS-SC}^*$, the helical smectic phase loses stability, and the smectic C* phase with $\dr \not\equiv 0$ and $\theta \not\equiv 0$ becomes stable (Proposition \ref{twist plane changes}). The effect of the elastic constants on the optimal twist plane is shown in Figure \ref{effect of elastic energy}, which shows the competition between the nematic elastic constants, $k_i$, and the smectic elastic constants, $\lambda_i$. If the nematic elastic constants are sufficiently large or the smectic elastic constants are sufficiently small, the stable configuration should exhibit a very small (approximately zero) out-of-plane angle (Proposition \ref{helical configuration limit}). In the left plot of Figure \ref{effect of elastic energy}, we fix the smectic elastic constants and increase the nematic elastic constants; the out-of-plane twist angle approaches zero as the nematic elastic constants increase in magnitude. In the right plot of Figure \ref{effect of elastic energy}, we fix the nematic elastic constants and increase the smectic elastic constants; the out-of-plane twist angle is an increasing function of $\lambda_i$ consistent with the fact that the smectic energy favours out-of-plane twistedness.




\begin{figure}
\centering
\includegraphics[width=.95\textwidth]{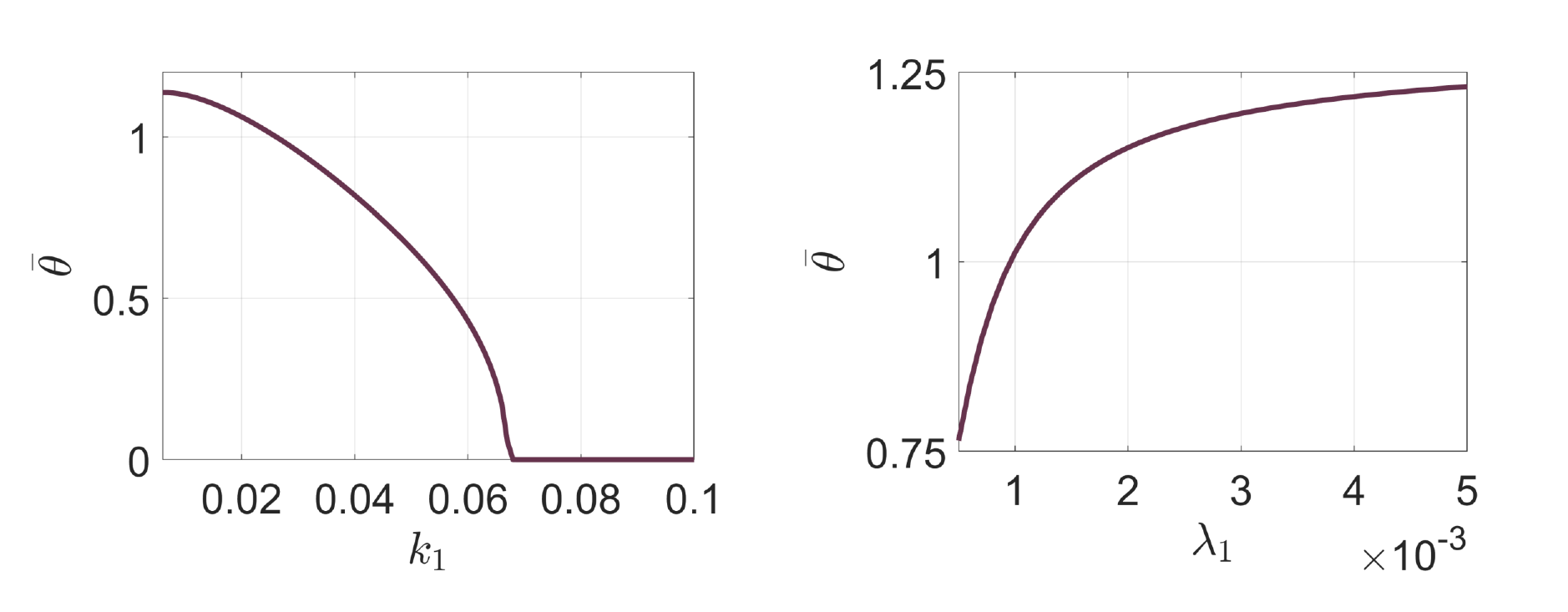}
\caption{The effect of the nematic elastic constant and smectic elastic constant on the stable twist plane. $\bar{\theta}$ is the average out-of-plane angle, which is defined as $\bar{\theta}=\int_0^h \theta^*(z)/h \ \d z$, where $(\dr^*,\theta^*)$ is the global minimizer of $F(\theta,\dr)$ in \eqref{energy theta final}. In the left plot, we take parameters $k_1=k_2$, $d=-5, f=10, \lambda_1=\lambda_2=0.001, h=2\pi, q=\sigma=4$, and $\theta_0=\pi/9$. In the right plot, we take $\lambda_1=\lambda_2$, $d=-5, f=10, k_1=k_2=k_3=0.025, h=2\pi, q=\sigma=4, \theta_0=\pi/9$.} 
\label{effect of elastic energy}
\end{figure}




\section{Conclusions}
\label{sec:conclusions}
We analyze a Landau-de Gennes type model for studying the symmetry-breaking transitions between cholesteric, helical smectic, and smectic C* phases. This model has two order parameters: the canonical Landau-de Gennes (LdG) order parameter and the positional order parameter, $\dr$, to account for the smectic layering. The corresponding free energy has multiple contributions - the standard LdG free energy of cholesterics, including a bulk potential and the cholesteric elastic energy density, a smectic bulk energy density, the smectic layering energy and a coupling energy between $\Qvec$ and $\dr$. The coupling energy dictates the preferred angle between the nematic director (modelled by the leading eigenvector of $\Qvec$ with largest positive eigenvalue) and the normal to the smectic layers. For the smectic A phase, the director and the smectic layer normal are co-aligned whereas a smectic C* phase is characterized by a non-zero angle between the director and the normal to the smectic layers. We prove the existence of global minimizers of this modified LdG-type energy in three-dimensional scenarios, with Dirichlet boundary conditions for $\Qvec$ and $\dr$.

We study minimizers of this modified LdG-type energy in three dimensions, in two asymptotic limits. The first asymptotic limit is the Oseen-Frank limit for which the re-scaled LdG bulk constants are larger than the re-scaled smectic constants, $d, e, f, \lambda_1, \lambda_2$ and the nematic elastic constants. In this limit, the modified LdG energy minimizers converge strongly to minimizers of the LdG bulk energy for $A<0$, defined by
\[
\Qvec = s_+ \left(\nvec \otimes \nvec - \frac{\mathbf{I}}{3}\right),
\] where $\nvec \in \mathbb{S}^2$, and $s_+$ is explicitly defined in terms of the LdG bulk constants. The Oseen-Frank limit is also equivalent to the large domain limit, for which the geometric length scales are much larger than the nematic coherence/correlation length.
The second asymptotic limit is, informally speaking, the Oseen-Frank limit combined with the small smectic energy limit, for which the rescaled parameters in \eqref{non-dimensionalized} satisfy
\[|A|, B, C \gg k_1, k_2, k_3 \gg |d|, e,f, \lambda_1, \lambda_2 \]
for which
$\nvec = \nvec_\sigma$ or the helical profile, to leading order. In other words, we recover the helical smectic phase in this second asymptotic limit for which the energy minimizer has (in an appropriately defined sense) a helical director profile and the positional order parameter or the smectic order parameter, $\dr$, may be zero or non-zero. A smectic phase corresponds to $\dr \not\equiv 0$ and a classical cholesteric phase corresponds to $\dr \equiv 0$.
Then, we study phase transitions as a function of the smectic bulk constant, $d$ (with a linear dependence on the temperature), in a one-dimensional setting, in the Oseen-Frank limit. We do not make a priori assumptions about the nematic elastic constants in this analysis. The limiting Oseen-Frank energy has two dependent variables: the angle $\theta$ between $\nvec$ and the $z$-axis and the smectic order parameter, $\dr$, and we study (local or global) minimizers of the limiting Oseen-Frank energy as a function of $d$. The cholesteric phase is defined by $\theta \equiv 0$ (in-plane director) and $\dr \equiv 0$. Note that when $\theta \equiv 0$, we recover $\nvec = \nvec_\sigma$ or the helical director profile regardless of the choice of $k_1, k_2$ (provided that they are consistent with \eqref{assumption oseen frank}). The helical smectic phase is defined by $\theta\equiv 0$ and $\dr \not\equiv 0$, and the Smectic C* phase is defined by $(\theta \not\equiv 0, \dr \not\equiv 0)$. We show that the cholesteric phase loses stability as $d$ decreases (temperature decreases), and undergoes a pitchfork bifurcation to the helical smectic phase. We show that the helical smectic phase loses stability as $d$ decreases, and in particular, $\theta \approx \frac{\pi}{2}-\theta_0$ when we are deep in the Smectic-C* phase (assuming an ansatz for $\dr$ and a constant $\theta$). In the limit of large elastic constants, we recover the helical director profile. 

The results in Section~\ref{sec: phase transition} are based on numerous assumptions but we numerically compute the critical points of \eqref{energy theta final} in Section~\ref{sec: numerical result}. These numerical results corroborate the Cholesteric-Helical Smectic-Smectic C* phase transitions as a function of decreasing temperature; they illustrate that the smectic layers become more pronounced as the temperature decreases and importantly, the numerical results demonstrate that $\theta \to \frac{\pi}{2} - \theta_0$ as $k_i$ get small or the smectic elastic constants get large (see Figure~\ref{effect of elastic energy} with $\theta_0=\frac{\pi}{9}$ so that $\theta \to 1.22$ in the limit of large smectic constants or in the low temperatures limit in Figure~\ref{Phase transition as temperature}). In some sense, our heuristic analysis in Section~\ref{sec: phase transition} is a good approximation to the full numerical results in Section~\ref{sec: numerical result}. These findings highlight the critical role of temperature in modulating the equilibrium states of chiral liquid crystals, offering insights into their thermodynamic behavior via the tensorial model.

Our work provides a robust theoretical and computational framework for studying complex phase transitions in liquid crystals, including the effects of chirality and positional ordering. Future work includes studying solution landscapes of such modified LdG energies, i.e., detailed studies of stable and unstable critical points of \eqref{eq:energy_LdG}. For example, can we have an unstable cholesteric phase with $\dr=0$ for low temperatures, co-existing with the Smectic C* phase, or can we have different types of stable smectic critical points, defined by different $\theta$ profiles and $\dr \neq 0$, in some suitably defined parameter regime? What are the experimental signatures of complex chiral smectic phases and what is the role of defects in stabilising frustrated chiral smectic systems? The modified LdG energy has many tunable phenomenological parameters, and the solution landscape is complex and potentially frustrated. It remains a challenge to fit these parameters to experimental data, and such fitting is essential for predictive modelling. We speculate that machine learning can play a constructive role in parameter fitting and texture identification, and future work might involve integrating machine learning methods with variational techniques for new-age liquid crystal research.



\section*{Acknowledgments}
The work of LZ was supported by the National Natural Science Foundation of China (No. 12225102, T2321001, 12288101). The work of JX was supported by the Young Elite Scientists Sponsorship Program by CAST (Grant No.~2023-JCJQ-QT-049), the Science and Technology Innovation Program of Hunan Province (Grant No.~2023RC3013), and the National Natural Science Foundation of China (Grant No.~12201636). The work of AM was supported by the Leverhulme Research Project Grant RPG-2021-401. BS would like to thank the Beijing International Center for Mathematical Research of Peking University and the University of Strathclyde for their support when the work on this paper was undertaken.

\bibliography{main}

\end{document}